\numberwithin{equation}{section} \hyphenation{semi-stable}
\newcommand {\PP}{\mathbb{P}}
\newcommand{\ffi}{\varphi}
\DeclareMathOperator{\Ann}{Ann} 
\DeclareMathOperator{\codim}{codim} 
\DeclareMathOperator{\coker}{coker}
\DeclareMathOperator{\Jac}{Jac}
\DeclareMathOperator{\Proj}{Proj}
\def\cocoa{{\hbox{\rm C\kern-.13em
      o\kern-.07em C\kern-.13em o\kern-.15em A}}}
\newtheorem{theorem}{Theorem}[section]
\newtheorem{lemma}[theorem]{Lemma}
\newtheorem{proposition}[theorem]{Proposition}
\newtheorem{corollary}[theorem]{Corollary}
\newtheorem{problem}[theorem]{Problem} 
\theoremstyle{definition}
\newtheorem{definition}[theorem]{Definition} 
\newtheorem{remark}[theorem]{Remark}
\newtheorem{example}[theorem]{Example}
\newtheorem{notation}[theorem]{Notation}
\definecolor{MyDarkGreen}{cmyk}{0.7,0,1,0}
\begin{document}

\title[Jacobian schemes arising from hypersurface arrangements in $\mathbb P^n$]%
{Jacobian schemes arising from hypersurface arrangements in $\mathbb P^n$}

\author[J.\ Migliore]{Juan Migliore${}^*$}
\address{ Department of Mathematics, University of Notre Dame, Notre
  Dame, IN 46556, USA} \email{Juan.C.Migliore.1@nd.edu} \author[R.\
\author[U.\ Nagel]{Uwe Nagel${}^{**}$} \address{Department of
  Mathematics, University of Kentucky, 715 Patterson Office Tower,
  Lexington, KY 40506-0027, USA} \email{uwe.nagel@uky.edu}
\thanks{Printed \today\\
  Migliore was partially supported by grants from the Simons Foundation
  \#309556 and \#839618.\\
  Nagel was partially supported by Simons Foundation grant \#636513.  \\
  The authors are very grateful to the referee for careful reading of the paper, and in particular for asking a question that led to the discovery of an oversight and a strengthening of the results of this paper.
} \pagestyle{plain}
\begin{abstract}
Freeness is an important property of a hypersurface arrangement, although its presence is not well understood. A hypersurface arrangement in $\PP^n$ is free if $S/J$ is Cohen-Macaulay (CM), where $S = K[x_0,\ldots,x_n]$ and $J$ is the Jacobian ideal. We study three related unmixed ideals: $J^{top}$, the intersection of height two primary components,  $\sqrt{J^{top}}$, the radical  of $J^{top}$, and when the $f_i$ are smooth we also study $\sqrt{J}$. Under mild hypotheses, we show that these ideals are CM. This establishes a full generalization of an earlier result with Schenck from  hyperplane arrangements to hypersurface arrangements. If the hypotheses fail for an arrangement in projective $3$-space, the Hartshorne-Rao module measures the failure of CMness. We establish consequences for the even liaison classes of $J^{top}$ and $\sqrt{J}$.

\end{abstract}
\maketitle
\section{Introduction}

Hypersurface arrangements play a role in several topics and have been intensely investigated using tools from algebra, algebraic geometry, combinatorics, topology and others. In this paper, we demonstrate that methods from liaison theory can be employed to elucidate properties of hypersurface arrangements.

A hypersurface arrangement $\mathcal A$ in projective space is a union  of distinct hypersurfaces $F_1,\ldots, F_s$ in $\mathbb P^n = \PP^n_k $, where $k$ denotes a field of characteristic zero. Note that $\mathcal A$ is defined by a product $f = \prod_{i=1}^s f_i$ of  forms $f_i$ in $S = K[x_0,\dots,x_n]$ defining $F_i$.  Let $J = \Jac(f) =  \langle \frac{\partial f}{\partial x_0}, \dots, \frac{\partial f}{\partial x_n} \rangle$ be the Jacobian ideal of $f$. 
We say that $\mathcal A$ is {\it free} if $J$ is a saturated ideal defining an arithmetically Cohen-Macaulay (ACM) codimension two subscheme $X$ of $\mathbb P^n$ (see Definition \ref{def of ACM}). It is an important question in hypersurface arrangement theory to ask what conditions force an arrangement of hypersurfaces to be free. The case that is most studied, of course, is the case of hyperplanes. 

Failure to be free can have several causes. Among them: (a)  the  ideal $J$ may fail to be saturated, (b)  $J$ may be saturated, but fails to be unmixed (this necessarily implies the existence of embedded components of $X$), or (c) it may even happen that $J$ (or its saturation)  is unmixed but $X$ fails to be ACM  for other reasons.

Given any arrangement, we can sidestep causes (a) and (b), and focus on whether (c) holds or not. We study two related ideals.  First, we study the ideal given by the intersection of the top dimensional primary components of $J$, which we will call $J^{top}$. The primary components of $J^{top}$ define codimension 2 subschemes of $\mathbb P^n$. We will also study the radicals $\sqrt{J^{top}}$ and $\sqrt{J}$.  The ideal $J^{top}$ is unmixed by definition, and so $\sqrt{J^{top}}$ is also unmixed. But $\sqrt{J}$ is not necessarily  unmixed, unless we add a condition. Consider the following motivating example. If $f = f_1 f_2$, where $f_1$ defines a quadric cone in $\mathbb P^3$ and $f_2$ defines a general plane, then $J^{top} = \sqrt{J^{top}}$ defines a smooth  plane conic, while $\sqrt{J}$ defines the disjoint union of a conic and a point. Of course the first is ACM while the second is not. On the other hand, taking $f_2$ to be a plane through the vertex of $f_1$, $J$ has an embedded component but $\sqrt{J}$ is unmixed, defining only a singular conic (a union of lines).   Note also that  $J^{top}$, $\sqrt{J^{top}}$ and $\sqrt{J}$ are saturated.

So the point  of studying $\sqrt{J^{top}}$ rather than $\sqrt{J}$ in general is that  the Jacobian ideal catches all the singularities of all codimensions, so $\sqrt{J}$ could allow a singularity  that is not embedded in $J^{top}$ to survive.  Under the additional assumption that all $f_i$ are smooth, this is not an issue and we also show that $\sqrt{J}$ defines an ACM scheme. This issue does not arise with hyperplane arrangements.

More precisely, consider a primary decomposition of $J$:
\[
J = \mathfrak q_1 \cap \dots \cap \mathfrak q_r
\]
and let $\mathfrak p_i$ be the associated prime for $q_i$ for $1 \leq i \leq r$. Removing all associated primes of height $>2$, the remaining intersection is well-defined, and we denote by $J^{top}$ this intersection. Similarly, the intersection of all associated primes absorbs all the associated primes of height $>2$, and the resulting ideal is the radical $\sqrt{J}$. When all the $F_i$ have degree 1 (i.e. it is a hyperplane arrangement), this is all we need. In  this paper, though, the singularities of the individual hypersurfaces $F_i$ have to be dealt with, especially when they have codimension 1 in $F_i$. These singularities will survive the process just described. 


The paper \cite{MNS} focused on a study of  the  schemes defined by $J^{top}$ and $\sqrt{J}$ in the case of hyperplane arrangements (i.e. when all the $f_i$ are linear forms), and specifically asked when these schemes are ACM. 
A main result was a condition on $\mathcal A$ which forces the schemes associated to both of these ideals to be ACM.

\begin{theorem}[{\cite[First main theorem]{MNS}}] \label{MNS first main thm-intro}
    Let $\mathcal A$ be a hyperplane arrangement in $\mathbb P^n$ defined by a product $f$ of linear forms. Let $J, \sqrt{J}$ and $J^{top}$ be the ideals defined above. Assume that no linear factor of $f$ is in the associated prime of any two non-reduced components of $J^{top}$. Then both $S/\sqrt{J}$ and $S/J^{top}$ are Cohen-Macaulay.
\end{theorem}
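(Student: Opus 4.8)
The plan is to prove the statement by induction on the number $s$ of hyperplanes, passing from $\mathcal A'=\mathcal A\setminus\{H_s\}$ to $\mathcal A$ and controlling the change by a short exact sequence of ideal sheaves; the combinatorial hypothesis will enter at exactly one point, to keep that change cohomologically tame. First a reduction: for a hyperplane arrangement the critical locus $V(J)$ equals $\bigcup_{i<j}(H_i\cap H_j)$, a union of codimension-two linear subspaces, so every minimal prime of $J$ has height two; hence $\sqrt J=\sqrt{J^{top}}$ and it suffices to show that $S/J^{top}$ and $S/\sqrt{J^{top}}$ are Cohen--Macaulay. Both ideals are saturated and unmixed of codimension two, so each quotient has depth at least one, and what must be shown is that, writing $X^{top}(\mathcal A)$ for the subscheme defined by $J^{top}$ (respectively by $\sqrt{J^{top}}$, where the scheme is the reduced union $\bigcup_L L$ of the codimension-two flats), one has $H^i_*(\mathcal I_{X^{top}(\mathcal A)})=0$ for $1\le i\le n-2$.

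For the inductive step write $f=f'\ell_s$. The base cases $s\le 2$ are trivial, since then $J^{top}$ is the unit ideal or the complete intersection $(\ell_1,\ell_2)$. Using the Euler relation one checks, at the generic point of each codimension-two flat, that $J(\mathcal A)$ is contained in $J(\mathcal A')$: at an old flat off $H_s$ the two Jacobian ideals agree; at an old flat contained in $H_s$ the multiplicity only increases; and a new flat necessarily lies on $H_s$, is reduced (exactly one $H_i$ meets $H_s$ in it), and is not in the singular locus of $f'$, where $J(\mathcal A')$ is locally trivial. Passing to top-dimensional parts gives $J^{top}(\mathcal A)\subseteq J^{top}(\mathcal A')$, with equality of ideal sheaves on $\mathbb P^n\setminus H_s$, so there is a short exact sequence
\[0\longrightarrow\mathcal I_{X^{top}(\mathcal A)}\longrightarrow\mathcal I_{X^{top}(\mathcal A')}\longrightarrow\mathcal Q\longrightarrow 0\]
with $\mathcal Q$ supported on $H_s\cong\mathbb P^{n-1}$. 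The sheaf $\mathcal Q$ is governed by the restricted arrangement $\mathcal A'|_{H_s}$ — a hyperplane arrangement in $\mathbb P^{n-1}$, carrying a multiplicity wherever several $H_i$ meet $H_s$ in a common codimension-two flat — together with the thickenings, along $H_s$, of those flats of $\mathcal A'$ that lie on $H_s$. The same construction with the reduced arrangement handles $\sqrt{J^{top}}$.

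Now the hypothesis. Deleting a hyperplane can only lower the multiplicity of a flat, so $\mathcal A'$ again satisfies the hypothesis; by induction $X^{top}(\mathcal A')$ and its reduced analogue are ACM, i.e. $H^i_*(\mathcal I_{X^{top}(\mathcal A')})=0$ for $1\le i\le n-2$. Feeding this into the long exact cohomology sequence of the displayed sequence reduces the ACM-ness of $X^{top}(\mathcal A)$ to two assertions about $\mathcal Q$: that $H^j_*(\mathcal Q)=0$ for $1\le j\le n-3$, and that the connecting map $H^0_*(\mathcal Q)\to H^1_*(\mathcal I_{X^{top}(\mathcal A)})$ vanishes (equivalently $H^0_*(\mathcal I_{X^{top}(\mathcal A')})\to H^0_*(\mathcal Q)$ is onto). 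Applied to the linear factor $\ell_s$, the hypothesis says precisely that at most one flat of $\mathcal A$ contained in $H_s$ is non-reduced — it forbids two ``fat'' flats on $H_s$ from interacting. Combined with the fact that every new flat on $H_s$ is reduced, this makes $\mathcal Q$ cohomologically tame, and one establishes the two assertions about $\mathcal Q$ by an induction of the same shape, now in $\mathbb P^{n-1}$, in which the single possible non-reduced flat is treated as an explicit, well-understood multiple structure. Hence $X^{top}(\mathcal A)$ is ACM; the case of $\sqrt{J^{top}}$ is easier, since no thickenings occur.

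The main obstacle is the analysis of $\mathcal Q$: identifying it precisely from the restricted multi-arrangement on $H_s$ and its thickenings, and proving that the bound ``at most one non-reduced flat on $H_s$'' — which is exactly the hypothesis for $\ell_s$ — forces both the vanishing of the intermediate cohomology of $\mathcal Q$ and the vanishing of the connecting homomorphism. If the hypothesis fails, this connecting map is nonzero, and for $n=3$ its image is the Hartshorne--Rao module of $X^{top}(\mathcal A)$, which then obstructs Cohen--Macaulayness — consistent with the abstract.
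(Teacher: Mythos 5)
Your overall strategy---delete one hyperplane, compare $\mathcal I_{X^{top}(\mathcal A)}\subseteq\mathcal I_{X^{top}(\mathcal A')}$, and chase the long exact sequence of the quotient $\mathcal Q$---is a legitimate skeleton and is genuinely different from the paper's route, which works bottom-up: it shows each non-reduced component is a saturated complete intersection of type $(e-1,e-1)$ (Corollary \ref{plane ci}), assembles the subarrangements through the non-reduced flats by Liaison Addition (Theorem \ref{LAthm1}), and then adds the remaining hyperplanes one at a time by Basic Double Linkage (Corollary \ref{BDL corollary}); the hypothesis on non-reduced components is exactly what makes the codimension conditions of those two tools hold. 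The preliminary steps of your argument are fine: $\Jac(f'\ell_s)\subseteq\Jac(f')$ via Euler's relation, equality of localizations off $H_s$ (this is Proposition \ref{compare assoc primes}), and the observation that every new flat carries exactly two hyperplanes and is reduced.

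The genuine gap is that the entire content of the theorem has been pushed into the two unproven assertions about $\mathcal Q$, and you do not supply the argument. In the easy case where $H_s$ contains no flat of $\mathcal A'$, one has the identity $I_{X^{top}(\mathcal A)}=\ell_s I_{X^{top}(\mathcal A')}+(f')$, so $\mathcal Q\cong \bigl(\mathcal I_{X^{top}(\mathcal A')}|_{H_s}\bigr)/(f')$ and both assertions follow from standard restriction sequences---but this is precisely Basic Double Linkage, and the paper already covers it. The hard case is when $H_s$ contains an old (non-reduced) flat $\Lambda_0$: there the local picture of $\mathcal Q$ along $\Lambda_0$ is the quotient of a complete intersection of type $(e-1,e-1)$ by one of type $(e,e)$, the identity above fails, and $\mathcal Q$ is a codimension-two sheaf mixing this thickening with the new reduced flats. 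Your claim that the hypothesis makes $\mathcal Q$ ``cohomologically tame,'' and in particular that $H^0_*(\mathcal I_{X^{top}(\mathcal A')})\to H^0_*(\mathcal Q)$ is surjective, is not established; that surjectivity is equivalent to controlling the minimal generators of $J^{top}(\mathcal A)$ relative to those of $J^{top}(\mathcal A')$, which is essentially the theorem itself, so asserting it without proof is close to circular. (Note also that ``an induction of the same shape in $\mathbb P^{n-1}$'' cannot literally apply: $\mathcal Q$ is not the ideal sheaf of an arrangement, and the surjectivity statement depends on the ambient ideal $\mathcal I_{X^{top}(\mathcal A')}$, not on $\mathcal Q$ alone.) To close the gap you would either have to compute $\mathcal Q$ explicitly in the thickening case, or reorganize the induction so that the hyperplanes through a given non-reduced flat are handled as a single block---which is exactly what the paper does.
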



The paper \cite{MNS} also carefully studied the situation when the schemes defined by $\sqrt{J}$ and $J^{top}$ are not ACM,   using primarily certain tools from Liaison Theory applied to hyperplane arrangements. In the case of hyperplane arrangements in $\mathbb P^3$, some results were also obtained on the size and shape of the Hartshorne-Rao module when the ACM property does not hold. This can be interpreted as a measure of the failure to be ACM.

For hyperplanes, it is intuitively clear that any component of the scheme defined by $J^{top}$ depends only on the hyperplanes of $\mathcal A$ containing its support, and this fact was used implicitly in \cite{MNS}. However, we are not aware of a published proof of this fact, and one of our first results in this paper  (Proposition \ref{jac = union}) makes the intuition precise. 

Our focus in this  paper is to extend many of the results from \cite{MNS} to hypersurface arrangements, and in particular \Cref{MNS first main thm-intro}. Here several new complications arise. Among them we note the following. First, a hyperplane is automatically smooth, and so singularities of the arrangement come precisely where hyperplanes of the arrangement meet; this is certainly not the case for hypersurfaces. Second, any two distinct hyperplanes automatically meet in a smooth codimension two complete intersection, but this is not true for hypersurfaces. Third, 
for a hyperplane arrangement $\mathcal A$,  if we choose a codimension two linear component $\Lambda$ of the scheme defined by $\sqrt{J}$ and restrict to the hyperplanes containing $\Lambda$, the Jacobian ideal of this subarrangement is a complete intersection. Passing to hypersurfaces, it becomes much more subtle, but is ``almost true" (see below).

Section \ref{sec: background} sets up the notation and recalls the constructions of Liaison Addition (due to P. Schwartau  \cite{Sw}) and Basic Double Linkage (due to Lazarsfeld and Rao \cite{LR}) that are crucial to our work in this  paper. It does not yet justify the fact that these tools can be applied to Jacobian ideals of hypersurface arrangements. 

Section \ref{one CI} proves a crucial fact for our construction, justifying the ``almost true" assertion made above. More precisely, with  suitable assumptions we show that the  scheme defined by the Jacobian ideal  is still a complete intersection of a certain precise type, but the Jacobian ideal is not saturated (Theorem \ref{sat is CI}). One of the assumptions is that all of the hypersurfaces involved have the same degree, which we show in Section \ref{sect: examples} is crucial.

Section \ref{BDL section} is devoted to establishing that for hypersurface (and hyperplane) arrangements, a version of Liaison Addition and Basic Double Linkage holds as long as suitable assumptions are made. The main new assumption is that if $f = f_1 \cdots f_s$ defines the arrangement, we assume that any two factors meet in a codimension two smooth complete intersection. 
 {\it Note that this  assumption forces the singularities of the individual $f_i$ to be of dimension 0 (or empty) and disjoint from the intersection of any  two $f_i$. However, we do not need the $f_i$ to be smooth except in the case when we discuss  $\sqrt{J}$.} See Lemma \ref{consequence of smooth}.

The main tool is Theorem \ref{LAthm1} (Liaison Addition for Arrangements). Intuitively it gives a very strong statement about the schemes associated to the union of two arrangements, making only the  additional assumption that the intersection of any two factors from one arrangement and one factor from the  other meet in codimension three. The result essentially says that in this situation, Liaison Addition in the  original sense can be applied. As a corollary we also show that Basic Double Linkage in the  original sense can be used for hypersurface arrangements (Corollary  \ref{BDL corollary}). It can be interpreted as showing what happens when ``most" surfaces are added one at a time to a hypersurface arrangement.

Both  of these tools  were fundamental in \cite{MNS} for the case of hyperplane arrangements, but showing that they still apply for hypersurface arrangements requires  suitable assumptions and much  more work. 

Section  \ref{sec: main theorem} gives the main result of this paper, Theorem \ref{cond star implies ACM}. In particular, it establishes the following generalization of \Cref{MNS first main thm-intro}.

\begin{theorem}
    \label{thm:intro-generalization}
Let $\mathcal A$ be a hypersurface arrangement in $\mathbb P^n$ defined by a form $f = f_1\cdots f_s$.  Let $J$ be its Jacobian ideal, and let $J^{top}$ and  $\sqrt{J^{top}}$ be as defined above. 
Assume that any two factors of $f$ meet in a smooth codimension two complete intersection, and that no $f_i$ is in the associated prime of more than one non-reduced component of $J^{top}$. 

Then  the schemes defined by $J^{top}$ and by  $\sqrt{J^{top}}$ are ACM. Furthermore, both  are equidimensional unions of complete intersections, each supported  at a complete intersection defined by some $(f_i, f_j)$.      

 Finally, if in addition each $f_i$ is smooth then also $\sqrt{J}$ defines an ACM scheme.
\end{theorem}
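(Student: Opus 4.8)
The plan is to deduce the final statement from the part of the theorem already proved, namely that $S/\sqrt{J^{top}}$ is Cohen--Macaulay (equivalently, from Theorem \ref{cond star implies ACM}), by showing that when every $f_i$ is smooth one has the equality of ideals
\[
\sqrt{J} \;=\; \sqrt{J^{top}}.
\]
Since $\sqrt{J}$ is the intersection of the minimal primes of $J$ while $\sqrt{J^{top}}$ is the intersection of those of height two, this equality holds as soon as every minimal prime of $J$ has height two, i.e.\ as soon as $V(J)$ is equidimensional of codimension two; in that case no primary component is discarded in forming $J^{top}$, and the two radicals coincide.

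To see that $V(J)$ is equidimensional of codimension two I would first establish the set-theoretic identity $V(J)=\bigcup_{i<j}V(f_i,f_j)$ --- presumably the content of Lemma \ref{consequence of smooth}. Writing $\partial f/\partial x_k=\sum_i\bigl(\prod_{\ell\neq i}f_\ell\bigr)\,\partial f_i/\partial x_k$, the inclusion $\supseteq$ needs no hypotheses: at a point of $V(f_i)\cap V(f_j)$ every summand contains a vanishing factor, so all partials of $f$ vanish there. For $\subseteq$, take $p\in V(J)$. If $p$ lies on no $V(f_i)$, then $f(p)\neq 0$ and Euler's identity $\sum_k x_k\,\partial f/\partial x_k=(\deg f)\,f$ (valid as $\operatorname{char}k=0$) forces some partial to be nonzero at $p$. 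If $p$ lies on exactly one, say $V(f_1)$, then all summands with $i\neq 1$ vanish at $p$, and the one that remains equals $\bigl(\prod_{\ell\neq 1}f_\ell(p)\bigr)\,\nabla f_1(p)$ with nonzero scalar factor, forcing $\nabla f_1(p)=0$ and contradicting smoothness of $V(f_1)$. Hence $p$ lies on at least two of the $V(f_i)$, giving $\subseteq$. Now each $V(f_i,f_j)$ is, by hypothesis, a codimension-two complete intersection, hence Cohen--Macaulay and pure of codimension two; a finite union of such schemes is again pure of codimension two, so $V(J)$ is, and therefore $\sqrt{J}=\sqrt{J^{top}}$, which defines an ACM scheme by the earlier part of the theorem.

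The main obstacle is the inclusion $V(J)\subseteq\bigcup_{i<j}V(f_i,f_j)$: this is the one place where smoothness of the individual $f_i$ is genuinely used, and its role is precisely to rule out the isolated singular points of a single $f_i$, which --- as the quadric cone example in the introduction shows --- would otherwise survive in $\sqrt{J}$ as a lower-dimensional component, breaking equidimensionality and hence the ACM property (a disjoint union of a curve and a point in $\mathbb P^3$ is never ACM). Once this set-theoretic description is in hand the argument is purely formal, and in particular it requires no liaison-theoretic input beyond what was already used for $J^{top}$ and $\sqrt{J^{top}}$.
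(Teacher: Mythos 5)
Your argument for the final claim is correct and is essentially the paper's own: the paper proves the first two assertions via \Cref{cond star implies ACM} (liaison addition and basic double linkage), and then disposes of the last one in a single sentence by observing that smoothness of each $f_i$ forces the singular locus of $f$ to be supported on the pairwise intersections, whence $\sqrt{J}=\sqrt{J^{top}}$ --- you have simply written out the Euler-identity computation behind that sentence. The only quibble is one of attribution: the set-theoretic identity $V(J)=\bigcup_{i<j}V(f_i,f_j)$ is not the content of \Cref{consequence of smooth} (which instead says that a smooth pairwise intersection forces the singular loci of the individual factors to be zero-dimensional and disjoint from it), but your direct verification of that identity is fine and makes the paper's implicit step explicit.
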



Note that the smoothness assumption in this result is clearly satisfied for any hyperplane arrangement,  and that (as noted above) the smoothness assumption in the last statement is not so restrictive since in any case each $f_i$ has at worst zero-dimensional singularities.

In Section \ref{sect: examples}, we give several examples  to indicate the sorts of problems that arise when one relaxes some of the assumptions in \Cref{thm:intro-generalization}.

One of the applications given in \cite{MNS} was to use Liaison Addition to show that every Buchsbaum even liaison class contains curves that arise both as schemes defined by $J^{top}$ and by $\sqrt{J}$, for suitable hyperplane arrangements $J$. In Section \ref{sec: HR} we discuss to what extent this might be extended to arrangements of hypersurfaces of higher degree using our methods.  In particular, we show that adding a general hypersurface to an arrangement preserves the even liaison class. Thus, it preserves the Cohen-Macaulay property if present originally.

One can view this paper as the next step in a logical sequence of ideas. In the paper \cite{GHM}, the authors studied (among other things) hyperplane arrangements where no three hyperplanes vanish on the same codimension two linear space. One of the results was that the   ideal of the singular locus (which coincides with the radical of the Jacobian)  is always ACM (even if it did not use the language of the Jacobian ideal). The paper \cite{GHMN} took the next step (among other things) of replacing the hyperplanes by hypersurfaces. It was shown that the ACM conclusion continues to hold, but again does not mention the  Jacobian ideal. The paper \cite{MNS} then introduced the study of hyperplane arrangements as described above, but weakened the assumption to allow non-reduced components of the Jacobian ideal. As noted above, the ACM conclusion continued to hold, even if new methods were needed. In this paper we take the next logical step of again replacing hyperplanes by hypersurfaces, and studying the Jacobian ideals. This highly complicates things, but with suitable assumptions the ACM conclusion still holds. In  Section \ref{sect: examples},  one of our goals is to indicate the difficulties of carrying this program on any further.  See for instance Example \ref{ex: twc}.

\section{Background} \label{sec: background}

Let $S = K[x_0,\ldots, x_n]$, where $K$ is algebraically closed of characteristic zero, and let $\mathfrak m = (x_0,\dots,x_n)$, the irrelevant ideal of $S$. Throughout this paper, we sometimes abuse  notation and denote  both a homogeneous polynomial and the hypersurface that it defines by the same capital letter, e.g. $G$. 

\begin{definition} \label{def of ACM}
    
 A subscheme $V$ of $\mathbb P^n$ is {\it arithmetically Cohen-Macaulay} (ACM) if $S/I_V$ is a Cohen-Macaulay ring, where $I_V$ is the saturated homogeneous ideal defining $V$. By abuse of notation, we will sometimes also refer to the ideal $I_V$ as being ACM. When $\dim V \geq 1$, an equivalent condition for $V$ to be ACM is for the Hartshorne-Rao modules of $V$ to all vanish:
\[
H^i(\mathbb P^n, \mathcal I_V(t)) = 0 \ \ \hbox{ for all } t \in \mathbb Z  \hbox{ and all }  1 \leq i \leq \dim V,
\]
where $\mathcal I_V$ is the ideal sheaf of $V$. (See for instance \cite{mig-book}.)
\end{definition}

\begin{notation}
Let $\mathcal A$ be a hypersurface arrangement of $s$ distinct hypersurfaces, i.e. $\mathcal A = \bigcup_{1 \leq i \leq s} G_i$. In some situations, but not all, we will assume that the $G_i$ all have the same degree, $d$. Let $G = \prod_{1 \leq i \leq s} G_i$. Let 
\[
J = \Jac(\mathcal A) = \Jac(G) = \left (\frac{\partial G}{\partial x_0}, \dots , \frac{\partial G}{\partial x_n} \right )
\]
be the Jacobian ideal of $\mathcal A$.  Note that $J$ is $\mathfrak m$-primary if and only if $G$ is smooth (which can only happen when $s=1$).

Now consider a primary decomposition of $J$:
\[
J = \mathfrak q_1 \cap \dots \cap \mathfrak q_r \cap \mathfrak q_i' \cap \dots \cap \mathfrak q_s'
\]
where the $\mathfrak q_i$ are the components of codimension 2 and the $\mathfrak q_i'$ all have codimension $>2$.
Let $\mathfrak p_i$ be the associated prime of $\mathfrak q_i$. 
We set
\[
J^{top}  =  \mathfrak q_1 \cap \dots \cap q_r 
\]
and we note that 
\begin{equation} \label{radicals}
\sqrt{J} =  \mathfrak p_1 \cap \dots \cap \mathfrak p_r 
\end{equation}
Both $J^{top}$ and $\sqrt{J}$ are well-defined, unmixed ideals. We denote by $X^{top}$ and $X^{red}$ the equidimensional schemes defined by $J^{top}$ and $\sqrt{J}$. We also denote by $J^{sat}$ the saturation of $J$ with respect to the irrelevant ideal $\mathfrak m$.
\end{notation}

We now recall some of the tools used in \cite{MNS}, which we reproduce here for convenience.
First we recall the construction of liaison addition. This was introduced by P. Schwartau in his Ph.D.\ thesis \cite{Sw}, which was never published.  The version that we need (and state) is entirely due to Schwartau, 
but we cite \cite{GM4} and \cite{mig-book} as the only convenient sources at this point.

\begin{theorem}[{{\bf (Liaison Addition)} \cite[Corollary 1.6, Theorem 1.3 and Corollary 1.5]{GM4}, \cite[Section 3.2]{mig-book}}] \label{LA}
Let $V_1$ and $V_2$ be locally Cohen-Macaulay, equidimensional codimension two subschemes in $\mathbb P^n$.  Choose homogeneous polynomials $F_1, F_2$ with $d_i = \deg F_i$ so that 
\[
F_1\in I_{V_1} \ \ \ \hbox{ and } \ \ \ F_2 \in I_{V_2}, 
\]
and furthermore $(F_1,F_2)$ is a regular sequence. Let $V$ be the complete intersection scheme defined by $(F_1,F_2)$. Let $I = F_2 I_{V_1} +  F_1 I_{V_2}$ and let $Z$ be the scheme defined by $I$. Then

\begin{itemize}
\item[(i)] If $V_1,V_2$ and $V$ pairwise have no common components then $Z = V_1 \cup V_2 \cup V$ as schemes.

\item[(ii)] $I$ is a saturated ideal.

\item[(iii)] If $h_X(t)$ denotes the Hilbert function of a scheme $X$ then we have
\[
h_Z(t) = h_V(t) + h_{V_1}(t-d_1) +  h_{V_2}(t-d_2).
\]

\item[(iv)] $Z$ is ACM if and only if both $V_1$ and $V_2$ are  ACM. More generally,  we have
\[
\bigoplus_{t \in \mathbb Z} H^i( \mathbb P^n, \mathcal I_Z (t)) \cong \bigoplus_{t \in \mathbb Z} H^i (\mathbb P^n, \mathcal I_{V_1}(t))(-d_2) \oplus \bigoplus_{t \in \mathbb Z} H^i (\mathbb P^n, \mathcal I_{V_2}(t))(-d_1)
\]
as graded $S$-modules, for $1 \leq i \leq n-2$. 

\end{itemize}
\end{theorem}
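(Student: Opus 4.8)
The plan is to derive all four parts from a single short exact sequence, with only part (i) making essential use of the local Cohen--Macaulay and equidimensionality hypotheses on $V_1$ and $V_2$.

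\emph{Step 1: the basic sequence.} Consider the graded $S$-homomorphism
\[
\varphi\colon I_{V_1}(-d_2)\oplus I_{V_2}(-d_1)\longrightarrow S,\qquad (a,b)\longmapsto F_2a+F_1b,
\]
whose image is $I=F_2I_{V_1}+F_1I_{V_2}$ by definition. Since $F_1\in I_{V_1}$ and $F_2\in I_{V_2}$, the assignment $c\mapsto(F_1c,-F_2c)$ gives a map $S(-d_1-d_2)\to I_{V_1}(-d_2)\oplus I_{V_2}(-d_1)$ with image in $\ker\varphi$; conversely, $F_2a+F_1b=0$ implies $F_2a\in(F_1)$, and since $F_2$ is a nonzerodivisor on $S/(F_1)$ (part of $(F_1,F_2)$ being a regular sequence) we get $a=F_1c$ and then $b=-F_2c$. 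This produces the short exact sequence
\begin{equation}\label{eq:LAseq}
0\longrightarrow S(-d_1-d_2)\longrightarrow I_{V_1}(-d_2)\oplus I_{V_2}(-d_1)\longrightarrow I\longrightarrow 0,
\end{equation}
using only that $(F_1,F_2)$ is a regular sequence with $F_i\in I_{V_i}$.

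\emph{Step 2: parts (ii), (iii) and (iv).} Apply local cohomology $H^\bullet_{\mathfrak m}(-)$ to \eqref{eq:LAseq} and to $0\to I_\ast\to S\to S/I_\ast\to 0$ for $I_\ast\in\{I,I_{V_1},I_{V_2}\}$, using $H^j_{\mathfrak m}(S)=0$ for $j\le n$ together with $H^0_{\mathfrak m}(S/I_{V_i})=0$ (the $I_{V_i}$ are saturated). The segment $H^1_{\mathfrak m}(I_{V_1})\oplus H^1_{\mathfrak m}(I_{V_2})\to H^1_{\mathfrak m}(I)\to H^2_{\mathfrak m}(S)=0$ shows $H^1_{\mathfrak m}(I)=0$, hence $H^0_{\mathfrak m}(S/I)=0$, that is, $I$ is saturated: this is (ii). Comparing alternating sums of Hilbert functions in \eqref{eq:LAseq} with the Koszul complex resolving $S/(F_1,F_2)$ yields the Hilbert-function identity of (iii), using $h_Z=h_{S/I}$ from (ii). Finally, for $1\le i\le n-2$ the ambient terms $H^{i+1}_{\mathfrak m}(S)$ and $H^{i+2}_{\mathfrak m}(S)$ all vanish, so \eqref{eq:LAseq} gives $H^i_{\mathfrak m}(S/I)\cong H^i_{\mathfrak m}(S/I_{V_1})(-d_2)\oplus H^i_{\mathfrak m}(S/I_{V_2})(-d_1)$; translating into the Hartshorne--Rao modules of $Z$, $V_1$, $V_2$ gives the isomorphism in (iv), and the ACM equivalence is the case where they all vanish (recall $H^0_{\mathfrak m}(S/I)=0$). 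One could also obtain the ``if'' half of the ACM statement via a mapping cone of the Hilbert--Burch resolutions of $I_{V_1}$ and $I_{V_2}$.

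\emph{Step 3: part (i), and the main obstacle.} Here I claim $I=I_{V_1}\cap I_{V_2}\cap I_V$ as ideals; containment $\subseteq$ is immediate from $F_i\in I_{V_i}$. For the reverse, localize at an arbitrary point $p\in\mathbb P^n$ and set $A_i=(I_{V_i})_p$, $C=(F_1,F_2)_p$. The key point, and the reason the hypotheses enter, is that since $V_2$ is locally Cohen--Macaulay and equidimensional the ideal $A_2$ has no embedded primes and all of its minimal primes have height two; as $V_2$ and $V$ share no component, none of these primes contains $F_1$, so $F_1$ is a nonzerodivisor modulo $I_{V_2}$ at every $p$---and symmetrically $F_2$ is a nonzerodivisor modulo $I_{V_1}$. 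Now for $g\in A_1\cap A_2\cap C$ write $g=aF_1+bF_2$; then $aF_1=g-bF_2\in A_2$ forces $a\in A_2$, and $bF_2=g-aF_1\in A_1$ forces $b\in A_1$, so $g\in F_1A_2+F_2A_1=I_p$. Hence $I_p=(I_{V_1}\cap I_{V_2}\cap I_V)_p$ for all $p$, giving $Z=V_1\cup V_2\cup V$ as schemes. The hard part is exactly this local argument: turning the geometric hypothesis ``no common component'' into the algebraic nonzerodivisor statements needs the absence of embedded primes in $I_{V_1}$ and $I_{V_2}$; without it one obtains only $Z\supseteq V_1\cup V_2\cup V$.
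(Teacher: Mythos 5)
The paper does not prove this theorem itself---it quotes it from \cite{GM4} and \cite{mig-book}---and your argument is correct and is essentially the standard one from those sources: the exact sequence $0\to S(-d_1-d_2)\to I_{V_1}(-d_2)\oplus I_{V_2}(-d_1)\to I\to 0$, local cohomology for (ii)--(iv), and the nonzerodivisor/localization argument (using unmixedness of $I_{V_1}$, $I_{V_2}$ from the locally CM and equidimensional hypotheses) for (i). One small point worth noting: your computation in Step 2 actually yields $h_Z(t)=h_V(t)+h_{V_1}(t-d_2)+h_{V_2}(t-d_1)$, with the shifts matching those in part (iv); the formula in part (iii) as transcribed in the statement has $d_1$ and $d_2$ interchanged, which appears to be a typo in the statement rather than a flaw in your proof.
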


The following construction, basic double linkage,  can be obtained from liaison addition. In fact it was introduced for curves in $\mathbb P^3$ by Lazarsfeld and Rao \cite{LR}, and different versions have been used in different contexts. We write down only the version that we need, but we note that there is a much more general version, the so-called {\em basic double G-linkage} -- cf. for instance \cite{MN3} Lemma 3.4 and \cite{mig-book} Theorem~3.2.3 and Remark~3.2.4. Our version follows from liaison addition 
by taking $V_2$ to be the empty set and $I_{V_2} = S$.

\begin{proposition} \label{BDL} {\bf (Basic Double Linkage)}
Let $V_1$ be a locally Cohen-Macaulay, equidimensional  codimension two subscheme in $\mathbb P^n$. Choose homogeneous polynomials $F_1, F_2$ with $d_i = \deg F_i$ and $F_1 \in I_{V_1}$ and $F_2 \in S$, such that $(F_1,F_2)$ is a regular sequence. Let $V$ be the complete intersection scheme defined by $(F_1,F_2)$. Let $I = F_2 I_{V_1} + (F_1)$ and let $Z$ be the scheme defined by $I$. Then

\begin{itemize}

\item[(i)] If $V_1$ and $V$ have no common components then $Z = V_1 \cup V$.

\item[(ii)] $I$ is a saturated ideal.

\item[(iii)] $Z$ is ACM if and only if $V_1$ is ACM. More generally, 
\[
\bigoplus_{t \in \mathbb Z} H^i( \mathbb P^n, \mathcal I_Z (t)) \cong \bigoplus_{t \in \mathbb Z} H^i (\mathbb P^n, \mathcal I_{V_1}(t))(-d_2) 
\]
for $1 \leq i \leq n-2$.

\item[(iv)] $Z$ is linked in two steps to $V_1$.

\end{itemize}

\end{proposition}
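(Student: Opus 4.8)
The plan is to deduce everything from Liaison Addition (Theorem \ref{LA}) by specializing $V_2$ to the empty scheme. Concretely, I would set $V_2 = \emptyset$, so that $I_{V_2} = S$, and take any form $F_2 \in S$ of degree $d_2$ such that $(F_1, F_2)$ is a regular sequence, where $F_1 \in I_{V_1}$ has degree $d_1$. With these choices the liaison addition ideal becomes
\[
I = F_2 I_{V_1} + F_1 I_{V_2} = F_2 I_{V_1} + (F_1),
\]
which is exactly the ideal in the statement. The complete intersection $V$ defined by $(F_1, F_2)$ is the same in both settings. So the strategy is: verify that the hypotheses of Theorem \ref{LA} are met in this degenerate case, then read off (i)--(iv) from the corresponding parts of Theorem \ref{LA}, and finally supply the one extra ingredient — part (iv), the two-step linkage — which has no analogue in the Liaison Addition statement.

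For the first three parts this is essentially bookkeeping. Part (ii), saturation of $I$, is immediate from Theorem \ref{LA}(ii), since that conclusion holds with no hypothesis on $V_2$ beyond what we have arranged. For part (i), I would apply Theorem \ref{LA}(i): since $V_2 = \emptyset$, the pairwise-no-common-components hypothesis there reduces to requiring only that $V_1$ and $V$ share no component, and then $Z = V_1 \cup V_2 \cup V = V_1 \cup V$ as schemes. For part (iii), the cohomology statement of Theorem \ref{LA}(iv) specializes: the summand coming from $V_2 = \emptyset$ vanishes (all Hartshorne--Rao modules of the empty scheme are zero), leaving
\[
\bigoplus_{t} H^i(\mathbb P^n, \mathcal I_Z(t)) \cong \bigoplus_{t} H^i(\mathbb P^n, \mathcal I_{V_1}(t))(-d_2)
\]
for $1 \le i \le n-2$, and the ACM equivalence follows since these modules all vanish simultaneously.

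The one genuinely new point is part (iv), that $Z$ is linked to $V_1$ in two steps. I would argue this directly via Gorenstein (complete intersection) liaison rather than through Liaison Addition. The complete intersection $V$ defined by $(F_1, F_2)$ contains $Z$ (clear from $I \supseteq (F_1, F_2)$ up to the $F_2 I_{V_1}$ term — more precisely $(F_1, F_2) \subseteq F_2 I_{V_1} + (F_1)$ fails, so one must be slightly careful: the right complete intersection to link by is one of degree $(d_1 + d_2, d_1')$ or similar). The cleanest route: choose $G \in I_{V_1}$ so that $(F_1, G)$ is a regular sequence; then $(F_1, F_2 G)$ is a complete intersection containing both $Z$ and a residual scheme, and linking $Z$ by $(F_1, F_2 G)$ produces an intermediate scheme, a second link by an appropriate complete intersection of the same type returns $V_1$. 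I expect this step to be the main obstacle, since one must pin down the precise complete intersections realizing the two links and check that the residual schemes are as claimed; however, this is standard in liaison theory and is exactly the classical basic double linkage computation, so I would cite \cite[Theorem 3.2.3, Remark 3.2.4]{mig-book} or \cite[Lemma 3.4]{MN3} for the details rather than reproduce them.
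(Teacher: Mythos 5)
Your proposal is correct and follows essentially the same route as the paper, which likewise obtains this proposition by specializing Liaison Addition to $V_2 = \emptyset$, $I_{V_2} = S$, and otherwise defers to the cited sources (\cite{LR}, \cite{MN3}, \cite{mig-book}) rather than proving anything from scratch. Your sketch of (iv) correctly identifies the classical construction (link $V_1$ by $(F_1,G)$ to a residual $W$, then link $W$ by $(F_1,F_2G)$ to $Z$), and citing \cite[Theorem 3.2.3]{mig-book} for the details is exactly what the paper does.
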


Finally, we will use the fact that if $V$ is a subvariety of $\mathbb P^n$ of dimension $\geq 2$  and the general hyperplane section of $V$ is ACM then $V$ itself must be ACM. A more general version  can be found in \cite{HU}.

\begin{proposition}[{\cite[Proposition 2.1]{HU}} ] \label{hyperplane sect acm} 
Let $V$ be a locally Cohen-Macaulay, equidimensional closed subscheme of $\mathbb P^n$ and let $F$ be a general homogeneous polynomial of degree $d$ cutting out on $V$ a scheme $Z \subset V \subset \mathbb P^n$. Assume that $\dim V \geq 2$. Then $V$ is ACM if and only if $Z$ is ACM.
\end{proposition}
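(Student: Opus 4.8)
The plan is to prove this purely at the level of homogeneous coordinate rings, via local cohomology. Set $R := S/I_V$; then $V$ is ACM exactly when $R$ is Cohen--Macaulay, that is (since $I_V$ is saturated, so $H^0_{\mathfrak m}(R)=0$) exactly when $H^i_{\mathfrak m}(R)=0$ for $1\le i\le \dim V=\dim R-1$, and these are precisely the Hartshorne--Rao modules of $V$ from Definition~\ref{def of ACM}. The key preliminary observation is that the hypotheses on $V$ force $R$ to be a \emph{generalized Cohen--Macaulay} ring: $V$ locally Cohen--Macaulay means $R_{\mathfrak p}$ is Cohen--Macaulay for every non-maximal prime $\mathfrak p$, and together with equidimensionality this makes $H^i_{\mathfrak m}(R)$ of \emph{finite length} for every $0\le i\le \dim V$. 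This finiteness is the engine of the argument.

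Next I would fix a general form $F$ of degree $d$ and note that it is a nonzerodivisor on $R$: $R$ has no embedded primes, so its associated primes are its minimal primes, each the ideal of a proper subvariety of $\mathbb P^n$, and a general form of positive degree avoids all of them. Consequently $R/FR=S/(I_V+(F))$ has dimension $\dim V$, the scheme $Z$ has saturated ideal $I_Z=(I_V+(F))^{\mathrm{sat}}$, and $S/I_Z$ is the quotient of $R/FR$ by its (finite-length) zeroth local cohomology; hence $H^i_{\mathfrak m}(S/I_Z)\cong H^i_{\mathfrak m}(R/FR)$ for all $i\ge 1$, so, since $\dim Z=\dim V-1\ge 1$, the scheme $Z$ is ACM if and only if $H^i_{\mathfrak m}(R/FR)=0$ for $1\le i\le \dim V-1$. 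The ``only if'' direction of the Proposition is then immediate: if $R$ is Cohen--Macaulay, then $R/FR$ is Cohen--Macaulay of dimension $\dim V\ge 2$, so its local cohomology vanishes below that dimension and $Z$ is ACM.

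For the reverse implication I would feed the exact sequence $0\to R(-d)\xrightarrow{\,\cdot F\,}R\to R/FR\to 0$ into local cohomology and extract, for each $i$, the short exact sequence
\[
0\longrightarrow H^i_{\mathfrak m}(R)/F\cdot H^i_{\mathfrak m}(R)\longrightarrow H^i_{\mathfrak m}(R/FR)\longrightarrow \bigl(0:_{H^{i+1}_{\mathfrak m}(R)}F\bigr)(-d)\longrightarrow 0.
\]
Assuming $Z$ is ACM, the middle term vanishes for $1\le i\le \dim V-1$, hence so do both outer terms. From the left-hand terms, $H^i_{\mathfrak m}(R)=F\cdot H^i_{\mathfrak m}(R)$ for $1\le i\le \dim V-1$; each such module has finite length and is annihilated by a power of $F$, so iterating gives $H^i_{\mathfrak m}(R)=0$ in that range. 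From the right-hand term with $i=\dim V-1$, multiplication by $F$ is injective on $H^{\dim V}_{\mathfrak m}(R)$; but a nonzero finite-length module has $\mathfrak m$ among its associated primes, hence a nonzero element killed by $F$, so $H^{\dim V}_{\mathfrak m}(R)=0$ as well. Therefore $H^i_{\mathfrak m}(R)=0$ for $1\le i\le \dim V$, $R$ is Cohen--Macaulay, and $V$ is ACM.

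The ``only if'' direction is soft, being just the behavior of Cohen--Macaulayness along a regular element; the content is the reverse direction, whose crux is lifting the vanishing of $H^i_{\mathfrak m}(R/FR)$ back to $R$. I expect this to be the main obstacle, and it is exactly where the hypothesis that $V$ be locally Cohen--Macaulay and equidimensional is indispensable: it supplies the finite lengths without which the graded Nakayama-type step collapses and the statement fails (for example, for $V$ a plane union a disjoint point, a general section is a plane curve, hence ACM, while $V$ is not). The only other delicate bookkeeping is the passage between $S/I_Z$ and $S/(I_V+(F))$, which is handled by the finite-length observation above.
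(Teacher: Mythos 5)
Your argument is correct, and it is worth noting that the paper itself offers no proof of this statement: it is quoted verbatim as a citation of \cite[Proposition 2.1]{HU}, so there is no ``paper proof'' to compare against step by step. What you have written is a clean, self-contained local-cohomology proof of the cited fact. The two load-bearing points are exactly the ones you flag: (1) locally Cohen--Macaulay plus equidimensional makes $R=S/I_V$ generalized Cohen--Macaulay, so that $H^i_{\mathfrak m}(R)$ has finite length for all $i<\dim R$ (this is the Grothendieck/Faltings finiteness criterion, or \cite[Lemma I.3.4]{} in Stückrad--Vogel's book on Buchsbaum rings --- it is standard but not trivial, and you should cite it rather than assert it); and (2) the four-term truncation of the long exact sequence, from which graded Nakayama on the finite-length modules kills $H^i_{\mathfrak m}(R)$ for $1\le i\le \dim V-1$ and the torsion argument kills $H^{\dim V}_{\mathfrak m}(R)$. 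Both are handled correctly, including the twist by $-d$ and the identification $H^i_{\mathfrak m}(S/I_Z)\cong H^i_{\mathfrak m}(R/FR)$ for $i\ge 1$. One small observation: your proof only uses that $F$ is a nonzerodivisor on $R$ (guaranteed by generality since $R$ has no embedded primes), so you actually prove a slightly stronger statement than the one quoted --- generality of $F$ beyond avoiding the finitely many associated primes is not needed for this equivalence, though Huneke--Ulrich use generality for the finer structural results elsewhere in their paper.
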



\section{Arrangements whose base locus is supported on a single smooth complete intersection} \label{one CI}

Given the Jacobian ideal $J$ of a hypersurface arrangement, 
our approach in this paper is to ``build up" the scheme defined by $J^{top}$, and the scheme defined by $\sqrt{J}$, using methods from liaison theory. In the next section we will justify why these methods are valid in our setting of Jacobian ideals. In this section we do something equally basic: we give a careful description of the ``building blocks" that our construction will use. The main result is Theorem \ref{sat is CI}.

\begin{remark} \label{common factor}
Let $F, P \in S$ be a regular sequence of homogeneous polynomials of the same degree $d$, and let $G_i \in [(F,P)]_d$ for $1 \leq i \leq s$. Then for $i \neq j$, $G_i$ and $G_j$ either form a regular sequence or they agree up to scalar multiplication. In particular, they have no non-trivial common factor. This is a standard fact about pencils of hypersurfaces: the base locus of the pencil is determined by any two of the elements of the pencil.
\end{remark}

It is elementary and well-known (see for instance \cite[Remark 3.1]{MNS}) that if $e$ hyperplanes all contain the same codimension two linear variety $\Lambda$, and if $F$ is the product of the corresponding linear forms, then the Jacobian ideal of $F$ is a saturated complete intersection of type $(e-1,e-1)$. In this section we generalize  this to hypersurfaces. See in particular Theorem \ref{sat is CI}.

In this paper we will often make the assumption that any  two hypersurfaces in our arrangement meet in a smooth codimension 2 subvariety, $C$. The next two lemmas give elementary consequences of this assumption.

\begin{lemma} \label{consequence of smooth}
Let $F,G \subset \PP^n$ be hypersurfaces (not necessarily smooth) of arbitrary degree whose intersection is a smooth codimension two complete intersection subvariety, $C$. Then the singular locus of $F$ and of $G$ is zero-dimensional and disjoint from $C$.
\end{lemma}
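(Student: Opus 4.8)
We are given that $F \cap G = C$ is a smooth codimension two complete intersection in $\PP^n$, where $F = V(f)$ and $G = V(g)$ for homogeneous forms $f, g$; here $(f,g)$ is automatically a regular sequence. We must show that $\operatorname{Sing}(F)$ and $\operatorname{Sing}(G)$ are each zero-dimensional (possibly empty) and do not meet $C$. By symmetry it suffices to treat $F$. The plan is to localize at a point $p \in \operatorname{Sing}(F)$ and exploit the fact that the scheme $C$ is cut out by the two equations $f = g = 0$ and is smooth of codimension two there.

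**Disjointness from $C$.** First I would show $\operatorname{Sing}(F) \cap C = \emptyset$. Suppose $p \in C$; then $p \in F \cap G$, and since $C$ is smooth of codimension two at $p$, the Jacobian matrix of the pair $(f,g)$ has rank two at $p$, i.e. the differentials $d_p f$ and $d_p g$ are linearly independent in the cotangent space at $p$. In particular $d_p f \neq 0$, so $p$ is a smooth point of $F$. Hence no point of $C$ is a singular point of $F$ (nor of $G$), which is the disjointness assertion.

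**Zero-dimensionality.** Next I would show $\dim \operatorname{Sing}(F) \le 0$. Suppose for contradiction that $\operatorname{Sing}(F)$ contains a curve (a subvariety $W$ of dimension $\ge 1$). By the previous paragraph $W$ is disjoint from $C = F \cap G$, so $W \subset F \setminus G$, meaning $g$ does not vanish identically on $W$. Now consider the restriction $g|_F$, a nonzero section of $\mathcal O_F(\deg g)$. Since $\PP^n$ has dimension $n \ge 2$ and $F$ is a hypersurface of dimension $n-1 \ge 1$, every effective divisor on $F$ — in particular the zero scheme of $g|_F$, which is exactly $C$ as a subscheme of $F$ — has pure codimension one in $F$, so dimension $n - 2$. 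But if $\dim W \ge 1$ and $W$ is an irreducible component of $\operatorname{Sing}(F)$ disjoint from $C$, this alone is not yet a contradiction; the real input is that $C$ must meet every positive-dimensional closed subset of $F$. Concretely: $g|_F$ is a nonzero form of positive degree on the projective variety $F$, hence its zero locus $C$ meets every closed subvariety of $F$ of dimension $\ge 1$ (a hypersurface section in a projective variety cannot miss a positive-dimensional subvariety). Therefore $W \cap C \neq \emptyset$, contradicting disjointness. Hence $\operatorname{Sing}(F)$ has dimension $\le 0$.

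**Main obstacle.** The only delicate point is the last intersection-theoretic step — that a nonzero hypersurface section of a projective variety meets every positive-dimensional closed subvariety. This is standard (it follows from, e.g., the projective dimension theorem, or from ampleness of $\mathcal O_{\PP^n}(1)$ restricted to $F$), but one should state it cleanly. Everything else is a routine unwinding of the definition of smoothness of a complete intersection via the rank of the Jacobian, together with the symmetric roles of $F$ and $G$.
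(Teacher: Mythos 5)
Your proposal is correct and follows essentially the same route as the paper: the paper also argues that a positive-dimensional component $D$ of $\operatorname{Sing}(F)$ would have to meet the hypersurface $G$, hence $C$, and that a singular point of $F$ on $C$ forces the Jacobian matrix of $(F,G)$ to drop rank there, contradicting smoothness of $C$. Your version merely separates the two steps (disjointness first, then zero-dimensionality) and spells out the intersection-theoretic input more explicitly; no substantive difference.
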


\begin{proof}
Without loss of generality suppose that $F$ has a singular locus, $D$, whose dimension is $\geq 1$. Then $D \cap G$ is non-empty. If $P \in D \cap G$ then $P \in D \subset F$  and $P \in G$ by construction, so $P \in C$. The contradiction then comes from an examination of the Jacobian matrix of $(F,G)$ to see that $C$ cannot be smooth.
\end{proof}

\begin{lemma} \label{1st lemma}
Let $F, P \subset \PP^n$ be smooth hypersurfaces of degree $d$ whose intersection is a smooth codimension two complete intersection subvariety, $C$. Let $G_1,G_2 \in [(F,P)]_d$. Assume that $G_1$ and $G_2$ define different hypersurfaces.
Then $G_1$ and $G_2$ meet transversally at every point of $C$. 
\end{lemma}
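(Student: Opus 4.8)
Since $F$ and $P$ have the same degree $d$ and form a regular sequence, the graded piece $[(F,P)]_d$ is exactly the $2$-dimensional pencil $\{aF + bP : a,b \in K\}$. So I would write $G_1 = a_1 F + b_1 P$ and $G_2 = a_2 F + b_2 P$ with $a_i, b_i \in K$. The hypothesis that $G_1$ and $G_2$ define different hypersurfaces means precisely that the vectors $(a_1,b_1)$ and $(a_2,b_2)$ are linearly independent over $K$, i.e.\ the matrix $M = \left(\begin{smallmatrix} a_1 & b_1 \\ a_2 & b_2 \end{smallmatrix}\right)$ is invertible.

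Next, fix a point $Q \in C$. Because $C = F \cap P$ is a smooth codimension two complete intersection, the Jacobian matrix of the pair $(F,P)$ has rank $2$ at $Q$; equivalently, the gradient (row) vectors $\nabla F(Q)$ and $\nabla P(Q)$ are linearly independent in $K^{n+1}$. By linearity of the differential, $\nabla G_i(Q) = a_i\, \nabla F(Q) + b_i\, \nabla P(Q)$ for $i = 1,2$, so that
\[
\begin{pmatrix} \nabla G_1(Q) \\ \nabla G_2(Q) \end{pmatrix}
= M \begin{pmatrix} \nabla F(Q) \\ \nabla P(Q) \end{pmatrix}.
\]
Since $M$ is invertible, the pair $\nabla G_1(Q), \nabla G_2(Q)$ spans the same $2$-dimensional subspace of $K^{n+1}$ as $\nabla F(Q), \nabla P(Q)$, and in particular is linearly independent. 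This is exactly the assertion that the Jacobian matrix of $(G_1,G_2)$ has rank $2$ at $Q$, i.e.\ that $G_1$ and $G_2$ are both smooth at $Q$ and their tangent hyperplanes there are distinct — that is, $G_1$ and $G_2$ meet transversally at $Q$. As $Q \in C$ was arbitrary, the lemma follows.

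\textbf{Main obstacle.} There is essentially no hard analytic input here: once one observes that $[(F,P)]_d$ is just the pencil spanned by $F$ and $P$ and that smoothness of $C$ gives independence of $\nabla F$ and $\nabla P$ along $C$, the statement reduces to the trivial remark that an invertible linear change of basis on a $2$-dimensional space sends a basis to a basis. The only point requiring a little care is phrasing ``meet transversally at every point of $C$'' correctly — namely that it is the rank-$2$ condition on the Jacobian of $(G_1,G_2)$ restricted to $C$, which also incorporates the (automatic, by \Cref{consequence of smooth}) smoothness of $G_1$ and $G_2$ at points of $C$.
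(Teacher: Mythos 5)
Your proof is correct and follows essentially the same route as the paper's: both arguments come down to the observation that $(G_1,G_2)=(F,P)$ (equivalently, that the change-of-basis matrix $M$ is invertible) together with the Jacobian criterion applied to the smooth complete intersection $C$. Your version simply makes explicit the gradient computation $\nabla G_i(Q)=a_i\nabla F(Q)+b_i\nabla P(Q)$ that the paper leaves implicit.
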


\begin{proof}
Note that $(G_1,G_2) = (F,P)$. 
Since $C$ is smooth, locally at every point $Q$, $(F,P)_Q$ defines exactly $C$, which is smooth, so the hypersurfaces $F$ and $P$ must meet transversally along $C$ (see also Remark \ref{common factor}).
\end{proof}

 We also need the following strengthening of Theorem 3.3 of \cite{GHMN} in the case of codimension 2. It removes the assumption that the intersection of any 3 of the hypersurfaces has codimension 3. In fact, it is a simple application of basic double linkage. But note that the result is not a complete intersection in general -- this should be contrasted with Theorem \ref{sat is CI}.

\begin{lemma} 
     \label{lem:star config is power}
Let $F,P \in S$ be  a regular sequence of homogeneous polynomials of degree $d$  defining a smooth complete intersection $C$.  For $1 \leq i \leq s$, let $G_i \in [(F,P)]_d$ be forms defining $s$ distinct hypersurfaces of $\PP^n$.   Let $G = \prod_{i=1}^s G_i$. Then the ideal
\[
\left ( \frac{G}{G_1}, \frac{G}{G_2}, \dots, \frac{G}{G_s} \right )
\]
is the saturated ideal of an ACM codimension two subscheme of $\mathbb P^n$ supported on $\Proj S/(F, P)$, of degree $\binom{s}{2} \cdot d^2$. In fact, there is an equality of ideals 
\[
(F,P)^{s-1}  = \left ( \frac{G}{G_1}, \frac{G}{G_2}, \dots, \frac{G}{G_s} \right ).
\]
\end{lemma}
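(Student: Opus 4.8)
The plan is to prove both the ACM/geometric claim and the precise ideal equality simultaneously, by induction on $s$, using basic double linkage as the engine.

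\textbf{Base case and setup.} For $s=1$ the ideal is the unit ideal and there is nothing to prove (or one starts at $s=2$, where $(G/G_1, G/G_2) = (G_2, G_1) = (F,P)$ by Remark \ref{common factor}, which is the complete intersection $C$ itself, visibly ACM of degree $d^2 = \binom{2}{2}d^2$, and the claimed equality $(F,P)^1 = (F,P)$ holds). So assume $s \geq 3$ and that the statement holds for $s-1$ distinct forms in $[(F,P)]_d$.

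\textbf{Inductive step via basic double linkage.} Write $G' = \prod_{i=1}^{s-1} G_i$, so $G = G' \cdot G_s$. By induction, $I' := \left(\frac{G'}{G_1}, \dots, \frac{G'}{G_{s-1}}\right) = (F,P)^{s-2}$ is the saturated ideal of an ACM codimension two scheme $V_1$ supported on $C = \Proj S/(F,P)$. Now observe that
\[
\frac{G}{G_i} = G_s \cdot \frac{G'}{G_i} \quad \text{for } 1 \leq i \leq s-1, \qquad \frac{G}{G_s} = G'.
\]
So the ideal in question is $I := G_s \cdot I' + (G')$. This is exactly the shape of the ideal produced by Proposition \ref{BDL} (Basic Double Linkage), provided we can exhibit $G'$ as an element of $I'$ and check that $(F_1, F_2) = (G_s \cdot F, \text{wait})$—more carefully: one takes $V_1$ as above, picks $F_1 \in I_{V_1} = I'$ to be a suitable generator, $F_2 = G_s$, and needs $(F_1, F_2)$ a regular sequence. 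The cleanest choice is $F_1 = F^{s-2}$ (which lies in $I' = (F,P)^{s-2}$) or simply any minimal generator; since $G_s \in [(F,P)]_d$ defines a hypersurface distinct from all $G_1, \dots, G_{s-1}$, Remark \ref{common factor} guarantees $G_s$ shares no common factor with $F$ (indeed $(F, G_s)$ is a regular sequence), so $(F^{s-2}, G_s)$ is a regular sequence and $G' = \prod G_i \in (F,P)^{s-1} \subset (F^{s-2}, G_s)$—actually I need $G' \in I_{V_1} \cdot$ something; the point is $G' = \frac{G}{G_s}$ is one of the listed generators and $G' \in (F,P)^{s-1} \subset (F,P)^{s-2} = I'$, so $I = G_s I' + (G')$ with $(G_s, G')$ forming part of a regular sequence since $G'$ is not divisible by $G_s$ and $G_s$ is irreducible modulo nothing—so $Z$ defined by $I$ is the basic double link of $V_1$ along $(F_1 = $ generator of $I'$, $F_2 = G_s)$. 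By Proposition \ref{BDL}(iii), $Z$ is ACM since $V_1$ is; by (ii), $I$ is saturated; by (i), $Z = V_1 \cup V$ where $V = V(F_1, G_s)$, so $Z$ is supported on $C$ (as $V_1$ is, and $V$ is a complete intersection containing $C$, hence also supported on $C$ since $V(F,P) \subseteq V(F_1, G_s)$—both contain $C$ and have codimension two). The degree count $\binom{s}{2} d^2 = \binom{s-1}{2}d^2 + (s-1)d^2$ follows from the Hilbert-function additivity in basic double linkage, since $\deg G_s = d$ and $\deg I' $ corresponds to degree $\binom{s-1}{2}d^2$, and the basic double link along a form of degree $d$ adds $d \cdot (\text{degree of the CI } V) = d \cdot (\deg F_1)(d)$; one chooses $F_1$ of degree $(s-2)d$ to make this come out to $(s-1)d^2$.

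\textbf{The ideal equality.} This is where the real work lies, and I expect it to be the main obstacle. One inclusion, $\left(\frac{G}{G_1}, \dots, \frac{G}{G_s}\right) \subseteq (F,P)^{s-1}$, is elementary: each $\frac{G}{G_i}$ is a product of $s-1$ forms each lying in $(F,P)$, hence lies in $(F,P)^{s-1}$. For the reverse inclusion, I would argue as follows. By the inductive hypothesis applied with $F_1 = F^{s-2}$-type generators, one knows $I' = (F,P)^{s-2}$; multiplying by $G_s$ and noting $G_s \equiv $ (a unit times) either $F$ or $P$ is false in general, so instead: since $(G_s, G') = $ part of a regular sequence and $I = G_s I' + (G')$, and since $I$ is saturated of the correct degree $\binom{s}{2}d^2$ equal to $\deg (F,P)^{s-1}$, while $(F,P)^{s-1}$ is also unmixed of codimension two (the power of a complete intersection ideal in a regular ring is again unmixed of the same height — this uses that $(F,P)$ is a complete intersection, so $S/(F,P)^{s-1}$ is Cohen-Macaulay, in fact it has a known free resolution) with the same degree and the same radical $(F,P)$, and with $I \subseteq (F,P)^{s-1}$, the two saturated ideals defining schemes of the same degree with one contained in the other must be equal. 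Concretely: $I$ and $(F,P)^{s-1}$ are both saturated, $I \subseteq (F,P)^{s-1}$, both have codimension two, both are unmixed with support $C$, and the colength (degree) agrees, forcing equality. The subtlety is justifying that $(F,P)^{s-1}$ is saturated and computing its degree as $\binom{s}{2}d^2$: for a complete intersection $(F,P)$ with $\deg F = \deg P = d$, the scheme $V((F,P)^{m})$ is the $m$-th infinitesimal-type thickening, which is ACM (the symmetric power resolution / Eagon–Northcott-type resolution gives this), with Hilbert polynomial of degree one and leading coefficient $\binom{m+1}{2}d^2$ (the multiplicity of $S/(F,P)^m$ along the prime $(F,P)$ is $\binom{m+1}{2}$, by the standard computation of length in the associated graded ring of a complete intersection), so with $m = s-1$ this is $\binom{s}{2}d^2$. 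I would cite the standard structure of powers of complete intersection ideals for the ACM-ness and multiplicity, and then the degree match plus containment plus saturation closes the argument.

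\textbf{Summary of the main obstacle.} The geometric/ACM part is a clean one-step induction via Proposition \ref{BDL}. The genuine content is the exact equality $(F,P)^{s-1} = \left(\frac{G}{G_1}, \dots, \frac{G}{G_s}\right)$; I expect to handle it by a degree-plus-containment argument once both sides are known to be saturated of codimension two with support $C$, leaning on the well-known fact that powers of a complete intersection ideal are perfect (ACM) with explicitly computable multiplicity $\binom{m+1}{2}d^2$. An alternative, more hands-on route for the equality is to verify it after localizing at the generic point of $C$ (where it becomes a statement about $\binom{s}{2}d^2 = $ length in a regular local ring of dimension two, and the generators $\frac{G}{G_i}$ become, up to units, products of $s-1$ linear-type elements in the two-generated maximal ideal, which do generate the $(s-1)$-st power of that ideal by a direct monomial-count in the graded case) and then upgrade using that both sides are saturated and unmixed — but I would present the degree-count version as the primary proof since it is shorter.
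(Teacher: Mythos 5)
Your proposal is correct and follows essentially the same route as the paper: induction on $s$, realizing $\left(\frac{G}{G_1},\dots,\frac{G}{G_s}\right) = G_s\cdot\left(\frac{G'}{G_1},\dots,\frac{G'}{G_{s-1}}\right) + (G')$ with $G' = G/G_s$ as a basic double link (Proposition \ref{BDL} with $F_1 = G' \in I'$, $F_2 = G_s$), and then deducing the equality with $(F,P)^{s-1}$ from the elementary containment together with the fact that both are saturated unmixed ideals of the same codimension and degree. The only blemish is the degree bookkeeping at the end of your inductive step: the correct choice is $F_1 = G/G_s$ of degree $(s-1)d$ (not a generator of degree $(s-2)d$), so the complete intersection $(F_1,G_s)$ contributes degree $(s-1)d^2$, exactly matching $\binom{s}{2} = \binom{s-1}{2} + (s-1)$; your hesitations about $F_1 = F^{s-2}$ should simply be deleted, since that choice would not produce the ideal in question.
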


\begin{proof} 
We begin by establishing the first claim.  Denote by $C_s$ the scheme defined by $(\frac{G}{G_1}, \dots, \frac{G}{G_s})$.
By Remark \ref{common factor},  no two $G_i$ share a common factor. 
Notice that $C$ is also defined by $G_1$ and $G_2$ since the $G_i$ also have degree $d$, so $(F,P) = (G_1,G_2)$.  We start with $s=2$, and set $C_2 = C$, and consider the product $G = G_1 G_2$. The ideal of $C_2$ is 
\[
I_{C_2} = (G_2,G_1) = \left ( \frac{G}{G_1}, \frac{G}{G_2} \right ).
\] 
The degree of $C_2$ is $\binom{2}{2} \cdot d^2$. 

Now we construct $C_3$. Notice that $G_3$ and $G_1 G_2$ form a regular sequence. Since $G_1 G_2 \in (G_1,G_2)$, the ideal
\[
I_{C_3} = G_3 \cdot I_{C_2} + (G_1 G_2) = (G_1G_3, G_2 G_3, G_1 G_2)
\]
defines a non-reduced ACM codimension two subvariety $C_3$ of degree 
\[
\deg C_3 = \deg C_2 + \deg(G_3, G_1 G_2) = d^2 + d\cdot (2d) = \binom{3}{2} \cdot d^2.
\]

Now we proceed by induction. Let $G_1, \dots, G_s$ be a set of hypersurfaces of degree $d$  satisfying  our hypotheses and assume that the result is true for $G_1,\dots,G_{s-1}$.  Consider the ideal
\[
I_{C_s} = G_s \cdot I_{C_{s-1}} + G_1 \cdots G_{s-1}.
\]
Again, $G_s$ and $G_1\cdots G_{s-1}$ form a regular sequence. By basic double linkage, this is the saturated ideal of an ACM codimension two subscheme $C_s$ in $\mathbb P^n$. Its degree is
\[
\deg C_s = \deg C_{s-1} + d \cdot \deg(G_1\cdots G_{s-1}) = \binom{s-1}{2} \cdot d^2 + d \cdot (s-1)d
\]
which gives the desired result.

It remains to verify the assertion that $C_s$ is supported on $C$ for all $s$. This follows because $C$ is a complete intersection of type $(d,d)$ and each $G_i \in I_C$ has degree $d$, so this is essentially Remark \ref{common factor}.

Finally, we show the second claim. 
Indeed, each $G_i$ is a linear combination $a_i F + b_i P$ so the inclusion $\supseteq$ is clear. Then both ideals are unmixed saturated ideals of schemes of the same degree, so the reverse inclusion is immediate. 
\end{proof}

Our next goal is to show that if $C$ is a smooth complete intersection subvariety of codimension two of type $(d,d)$, and if $G_i$ ($1 \leq i \leq s$) are all elements of $[I_C]_d$, then the product $G = \prod_{i=1}^s G_i$ has the property that $Jac( G)^{sat}$ is a complete intersection. We first say very precisely which complete intersection it will be, and then we prove the asserted fact. This statement is true without any smoothness assumptions. 

\begin{proposition} 
   \label{lem:ci automatic}
Let $F, P \in S$ be  a regular sequence of homogeneous polynomials of degree $d$.  For $i = 1,\ldots, s$ with $s \ge 2$, let $G_i = a_i F + b_i P \in [(F,P)]_d$ (with scalars $a_i, b_i \in K$) be forms defining $s$ distinct hypersurfaces of $\PP^n$.   Set $G = \prod_{i=1}^s G_i$. Then the two forms 
\begin{align*}
H_1  =  \displaystyle \sum_{i=1}^s a_i \frac{G}{G_i} \quad \text{ and } \quad
H_2   =  \displaystyle \sum_{i=1}^s b_i \frac{G}{G_i}  
\end{align*}
form a regular sequence. 
\end{proposition}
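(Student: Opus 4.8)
My plan is to exploit the pencil structure of the $G_i$ directly, reducing everything to the single fact (Remark \ref{common factor}) that two distinct factors share no common component. First I would record the identity obtained by differentiating $G = \prod_{i=1}^s G_i$ and substituting $G_i = a_i F + b_i P$:
\[
\frac{\partial G}{\partial x_k} \;=\; \sum_{i=1}^s \frac{G}{G_i}\,\frac{\partial G_i}{\partial x_k} \;=\; H_1\,\frac{\partial F}{\partial x_k} \;+\; H_2\,\frac{\partial P}{\partial x_k} \qquad (0 \le k \le n),
\]
where each $\frac{G}{G_i} = \prod_{l \ne i} G_l$ is a genuine polynomial. Thus $\Jac(G) \subseteq (H_1, H_2)$. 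Since $S$ is a UFD, to prove the claim it suffices to show that $H_1$ and $H_2$ are both nonzero and share no common irreducible factor.

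Next I would dispose of nonvanishing. If $H_1 = 0$, choose an irreducible factor $Q$ of $G_1$; since $G_1$, hence $Q$, divides $\frac{G}{G_i}$ for every $i \ne 1$, the relation $0 = H_1 = \sum_i a_i \frac{G}{G_i}$ forces $Q \mid a_1 \frac{G}{G_1}$. But $Q \nmid \frac{G}{G_1} = \prod_{l \ge 2} G_l$: otherwise $Q$ would divide some $G_l$ with $l \ne 1$, making $Q$ a common factor of the distinct forms $G_1$ and $G_l$, contrary to Remark \ref{common factor}. Hence $a_1 = 0$, and repeating over all indices forces every $a_i = 0$; then each $G_i = b_i P$ is proportional to $P$, impossible for $s \ge 2$ distinct hypersurfaces. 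Symmetrically $H_2 \ne 0$.

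Then I would rule out a common irreducible factor $Q$ of $H_1$ and $H_2$. Because $Q$ divides every partial $\frac{\partial G}{\partial x_k}$, the Euler relation $(\deg G)\,G = \sum_k x_k\,\frac{\partial G}{\partial x_k}$ shows $Q \mid G$, hence $Q \mid G_j$ for some $j$. Now $Q$ divides $\frac{G}{G_i}$ for every $i \ne j$, so $H_1 \equiv a_j \frac{G}{G_j}$ and $H_2 \equiv b_j \frac{G}{G_j} \pmod{Q}$; since $Q$ divides both $H_1$ and $H_2$ and $(a_j, b_j) \ne (0,0)$ (otherwise $G_j = 0$), we obtain $Q \mid \frac{G}{G_j} = \prod_{l \ne j} G_l$, hence $Q \mid G_l$ for some $l \ne j$ --- again a common factor of two distinct forms, contradicting Remark \ref{common factor}. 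Therefore $H_1$ and $H_2$ have no common factor and form a regular sequence.

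The step I expect to be the main obstacle is simply the careful bookkeeping of which subproducts $\frac{G}{G_i}$ are killed by a hypothetical common factor; beyond that the argument is purely formal. It is worth emphasizing that no smoothness of $F$, $P$, or the $G_i$ enters --- only Remark \ref{common factor} --- which is precisely why this statement, unlike the later results, needs no smoothness hypothesis. If a geometric phrasing is preferred, the same case split can instead be run at a general point of a putative codimension-one component of the zero locus of $(H_1,H_2)$, using that this locus lies on the hypersurface $G = 0$ by the displayed identity together with Euler's relation.
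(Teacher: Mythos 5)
Your proof is correct, and it takes a genuinely different route from the paper's. The paper first treats the model case $R=k[x,y]$, $F=x$, $P=y$, proving by induction on $s$ that $h_1,h_2$ have no common factor: the identity $xh_1+yh_2=sL$ forces a putative gcd to be a subproduct $\ell_1\cdots\ell_k$ of the linear factors, and the induction hypothesis (for $k\ge 2$) or a direct check (for $k=1$) yields the contradiction; the general case then follows by transporting the regular sequence along the flat map $x\mapsto F$, $y\mapsto P$ (flat because $F,P$ is a regular sequence). You instead argue directly in $S$: the identity $\partial G/\partial x_k=H_1F_{x_k}+H_2P_{x_k}$ --- which the paper only records later, as Equation~\eqref{eq:derivatives of G} in the proof of Theorem~\ref{sat is CI} --- combined with Euler's relation in characteristic zero shows that a common irreducible factor $Q$ of $H_1$ and $H_2$ divides $G$, hence some $G_j$; reducing $H_1$ and $H_2$ modulo $Q$ then forces $Q$ to divide $G/G_j$ and hence a second factor $G_l$, contradicting Remark~\ref{common factor}. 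Working with a single irreducible factor rather than the full gcd is what lets you dispense with the induction, and your treatment of the degenerate cases $H_i=0$ is also sound (modulo the trivial observation that the $H_i$, being homogeneous of positive degree $(s-1)d$, are non-units, so ``nonzero with no common irreducible factor'' does imply ``regular sequence''). Your route is more elementary and self-contained --- no flat base change, no induction --- and it front-loads the matrix identity that Theorem~\ref{sat is CI} needs anyway; the paper's route isolates the two-variable pencil computation, which makes transparent that the statement is really about the pencil and provides a reusable transfer principle, at the cost of invoking flatness. Both arguments ultimately rest on the same two inputs: Euler's relation and the fact that distinct members of the pencil share no common component (Remark~\ref{common factor}).
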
 

\begin{proof}
We consider first a special case, where $F$ and $P$ are two variables. It will be convenient to use new notation. Let $R = k[x, y]$ be a polynomial ring in two variables. For $i = 1,\ldots, s$, let $\ell_i = a_i x + b_i y$ be linear forms definining $s$ distinct hyperplanes.  Define $L = \ell_1 \cdots \ell_s$, 
\[
h_1 = \displaystyle \sum_{i=1}^s a_i \frac{L}{\ell_i} \quad \text{ and } \quad h_2 = \displaystyle \sum_{i=1}^s b_i \frac{L}{\ell_i}. 
\]
By the choice of the linear forms, we know $(\ell_i, \ell_j) = (x, y)$ whenever $i \neq j$. 
We use induction on $s$ to show that $h_1$ and $h_2$ form an $R$-regular sequence. Let $s = 2$. Since 
\[
(\ell_1, \ell_2) = (a_1 x + b_1 y, a_2 x + b_2 y) = (x, y), 
\]
the matrix $\begin{bmatrix}
a_1 & a_2 \\
b_1 & b_2
\end{bmatrix}$ is invertible. It follows that  
\[
(h_1, h_2) = (a_1 \ell_2 + a_2 \ell_1,  b_1 \ell_2 + b_2 \ell_1) = (\ell_1, \ell_2)
\] 
 has codimension two, as desired. 

Let $s \ge 3$. Assume that $h_1$ and $h_2$ have a greatest common divisor, $g$, of positive degree. Note that 
\[
\begin{bmatrix}
\ell_1 & \cdots & \ell_s 
\end{bmatrix} = \begin{bmatrix}
x & y 
\end{bmatrix} \cdot \begin{bmatrix}
a_1 & \ldots & a_s \\
b_1 & \ldots & b_s
\end{bmatrix}
\] 
and 
\[
\begin{bmatrix}
h_1 \\
h_2 
\end{bmatrix}  = \begin{bmatrix}
a_1 & \ldots & a_s \\
b_1 & \ldots & b_s
\end{bmatrix}
\cdot 
\begin{bmatrix}
\frac{L}{\ell_1} \\
\vdots \\
\frac{L}{\ell_s}
\end{bmatrix}. 
\] 
Thus, we obtain
\[
x h_1 + y h_2 =  \begin{bmatrix}
x & y 
\end{bmatrix} \cdot \begin{bmatrix}
h_1 \\
h_2 
\end{bmatrix}  = \sum_{i = 1}^s \ell_i \frac{L}{\ell_i} = s L. 
\]
Hence $g$ divides $L = \ell_1 \cdots \ell_s$. Since $\ell_1, \ldots, \ell_s$ are irreducible and pairwise coprime, possibly  after re-indexing, we may assume that $g =  \ell_1 \cdots \ell_k$ for some $k$ with $1 \le k \le s-1$. In particular, none of the forms $\ell_{k+1},\ldots, \ell_s$ divides $g$. Using that $h_1 =  \sum_{i=1}^k a_i \frac{L}{\ell_i} + \sum_{i= k+1}^s a_i \frac{L}{\ell_i}$, one gets 
\[
h_1 = \ell_{k+1} \cdots \ell_s \cdot h_1' +  \ell_1 \cdots \ell_k \cdot \widetilde{h}_1 = 
\ell_{k+1} \cdots \ell_s \cdot h_1' +  g \cdot \widetilde{h}_1
\]
with some $\widetilde{h}_1 \in R$ and $h_1' =  \sum_{i=1}^k a_i \frac{g}{\ell_i}$. Similarly, we see
\[
h_2 = \ell_{k+1} \cdots \ell_s \cdot h_2' +  g \cdot \widetilde{h}_2
\] 
with some $\widetilde{h}_2 \in R$ and $h_2' =  \sum_{i=1}^k b_i \frac{g}{\ell_i}$. Since $g$ divides $h_1$ and $h_2$ and is relatively prime to $ \ell_{k+1} \cdots \ell_s$, we conclude that $g$ divides $h_1'$ and $h_2'$. However, the induction hypothesis applied to $\ell_1,\ldots, \ell_k$  gives that $h_1'$ and $h_2'$ form a regular sequence if $k \ge 2$, a contradiction. If $k = 1$, i.e., $g = \ell_1$, we have $h_1  = a_1 
\ell_{2} \cdots \ell_s  +  \ell_1 \cdot \widetilde{h}_1$ and $h_2  = b_1 
\ell_{2} \cdots \ell_s  +  \ell_1 \cdot \widetilde{h}_2$.  Since $a_1$ and $b_1$ cannot be both zero, it follows that $g = \ell_1$ does not divide both $h_1$ and $h_2$, a contradiction. Thus, we have shown that $h_1$ and $h_2$ form an $R$-regular sequence. 

Second, we discuss the general case. 
Consider  the $k$-algebra homomorphism $\ffi \colon R \to S$, defined by $x \mapsto F$ and $y \mapsto G$. 
It is flat because $F, G$ is a regular sequence by assumption. Hence $H_1 = \ffi (h_1)$ and $H_2 = \ffi (h_2)$ form an $S$-regular sequence by \cite[Theorem 15.1]{M}. 
\end{proof}

We are ready to establish the main result of this section.

\begin{theorem} \label{sat is CI} 
Consider a smooth complete intersection $Z \subset \PP^n$ of dimension $n-2$ cut out by two hypersurfaces of degree $d$. Let $ G_1,\ldots,G_s  \in [I_Z]_d$ be forms defining $s$ distinct  smooth hypersurfaces of $\PP^n$.   Set $G = \prod_{i=1}^s G_i$.
Then the saturation of the Jacobian ideal $J = \Jac(G)$ is a complete intersection. In particular, $J^{top} = J^{sat}$ is ACM, and $\sqrt{J} = I_Z$ is ACM. 

More, precisely, 
$J^{sat} = (H_1,H_2)$ is a complete intersection of type $((s-1)d, (s-1)d)$, where $H_1$ and $H_2$ are defined as in \Cref{lem:ci automatic} with $I_Z = (F, P)$ and $ G_i = a_i F + b_i P$.
\end{theorem}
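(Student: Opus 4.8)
The plan is to compute the Jacobian ideal $J = \Jac(G)$ explicitly using the chain rule, and then identify its saturation with the complete intersection $(H_1, H_2)$ from \Cref{lem:ci automatic}. First I would observe that since each $G_i = a_i F + b_i P$, we have $\frac{\partial G_i}{\partial x_j} = a_i \frac{\partial F}{\partial x_j} + b_i \frac{\partial P}{\partial x_j}$, so by the product rule
\[
\frac{\partial G}{\partial x_j} = \sum_{i=1}^s \frac{G}{G_i}\cdot \frac{\partial G_i}{\partial x_j} = \left(\sum_{i=1}^s a_i \frac{G}{G_i}\right)\frac{\partial F}{\partial x_j} + \left(\sum_{i=1}^s b_i \frac{G}{G_i}\right)\frac{\partial P}{\partial x_j} = H_1 \frac{\partial F}{\partial x_j} + H_2 \frac{\partial P}{\partial x_j}.
\]
Thus $J \subseteq (H_1, H_2)$, and in fact $J = H_1 \cdot \Jac(F) + H_2 \cdot \Jac(P) + (\text{cross terms})$; more precisely $J$ is generated by the forms $H_1 \frac{\partial F}{\partial x_j} + H_2 \frac{\partial P}{\partial x_j}$ for $j = 0,\ldots,n$.

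Next I would establish the reverse containment up to saturation. Since $F$ and $P$ are smooth (so $\Jac(F)$ and $\Jac(P)$ are $\mathfrak m$-primary) and more importantly since $Z = \Proj S/(F,P)$ is smooth, the Jacobian matrix of $(F,P)$ has rank $2$ at every point of $Z$, hence the ideal of $2\times 2$ minors of that matrix together with $(F,P)$ has no common zero on $Z$ — equivalently, away from a locus of codimension $\geq 3$, the row vectors $(\partial F/\partial x_j)$ and $(\partial P/\partial x_j)$ are linearly independent. This linear independence lets me solve, locally off a codimension $3$ set, for $H_1$ and $H_2$ as $S$-linear combinations of the generators $\{H_1 \frac{\partial F}{\partial x_j} + H_2 \frac{\partial P}{\partial x_j}\}$, which shows that $(H_1, H_2)$ and $J$ agree in codimension $2$. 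Combined with \Cref{lem:ci automatic}, which tells us $(H_1,H_2)$ is a complete intersection of type $((s-1)d,(s-1)d)$ hence unmixed and saturated, and with the containment $J \subseteq (H_1,H_2)$, I get $J^{sat} = (H_1, H_2)$: indeed $J^{sat}$ is contained in $(H_1,H_2)^{sat} = (H_1,H_2)$, and the two define the same codimension two scheme, so since $(H_1,H_2)$ is unmixed of codimension $2$ the saturation of $J$ must equal it.

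Then the remaining assertions follow formally. Since $J^{sat} = (H_1, H_2)$ is a complete intersection, it is ACM, and because it is unmixed of codimension $2$ we get $J^{top} = J^{sat}$ (the top-dimensional part of $J$ coincides with its saturation precisely when the saturation is already unmixed). For the radical: $\sqrt{J} = \sqrt{(H_1,H_2)}$, and since $(H_1,H_2)$ defines a scheme supported on $Z$ (by \Cref{lem:star config is power} or Remark \ref{common factor}, as both $H_1, H_2 \in [(F,P)^{s-1}]$ roughly speaking — in any case the zero locus of $(H_1,H_2)$ is contained in $V(G/G_i)$ for each $i$, hence in $Z$), and $Z$ is reduced and irreducible of codimension $2$, we get $\sqrt{J} = I_Z$, which is ACM as $Z$ is a complete intersection.

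The main obstacle will be the reverse containment step, i.e.\ showing $(H_1, H_2) \subseteq J^{sat}$ — equivalently that $J$ and $(H_1,H_2)$ agree in codimension $2$. The delicate point is that this genuinely uses the \emph{smoothness} of $Z$ (so that the Jacobian matrix of $(F,P)$ has full rank $2$ along $Z$, allowing one to invert and recover $H_1, H_2$ from the partials $\partial G/\partial x_j$), and one must be careful that the locus where this rank condition fails — where $H_1, H_2$ might \emph{not} be recoverable — has codimension at least $3$ and hence does not affect the codimension $2$ primary components. I would handle this by working locally at the generic point of each codimension $2$ component of $V(H_1,H_2)$, which lies on $Z$ where the rank is $2$, and checking that there the localized ideals $J$ and $(H_1,H_2)$ coincide; alternatively one can phrase it via the Jacobian criterion and the fact that $\sqrt{J}$ (singular locus of the arrangement) in codimension $2$ is exactly $Z$. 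Everything else is bookkeeping with saturation and unmixedness.
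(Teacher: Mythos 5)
Your proposal follows essentially the same route as the paper's proof: the chain-rule identity $\nabla G = H_1\,\nabla F + H_2\,\nabla P$ gives $J \subseteq (H_1,H_2) = (H_1,H_2)^{sat}$ via Proposition~\ref{lem:ci automatic}, and the reverse inclusion comes from inverting a $2\times 2$ minor of the Jacobian matrix of $(F,P)$, which smoothness of $Z$ makes possible at every point of $Z$. The one refinement worth adopting from the paper: localize at \emph{every} point of $Z$ rather than only at the generic points of the codimension-two components of $V(H_1,H_2)$, since agreement at generic points only yields $(H_1,H_2)\subseteq J^{top}$, whereas to get the stated equality $(H_1,H_2)= J^{sat}$ one combines the stalkwise containment at all points of $Z$ with the observation that both schemes are supported set-theoretically exactly on $Z$.
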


\begin{proof} 
For any variable $x_k$ of $S$, one computes
\begin{align} 
\frac{\partial G}{\partial x_k}  & = \sum_{i=1}^s \left [ a_i \frac{\partial F}{\partial x_k} + b_i \frac{\partial P}{\partial x_k} \right ] \frac{G}{G_i} \nonumber \\[2pt]
 & =  \left ( \frac{\partial F}{\partial x_k} \cdot \sum_{i=1}^s a_i \frac{G}{G_i} \right ) + \left ( \frac{\partial P}{\partial x_k} \cdot \sum_{i=1}^s b_i \frac{G}{G_i} \right ) \in (H_1,H_2).    \label{eq:derivatives of G} 
\end{align}
We conclude that $\Jac(G) \subseteq (H_1,H_2)$. Since a regular sequence generates a saturated ideal, and saturation preserves inclusions, we get by \Cref{lem:ci automatic} 
\[
\Jac(G)^{sat}  \subseteq (H_1, H_2)^{sat} = (H_1,H_2). 
\]

Next, we establish the reverse inclusion. 
Since $Z$ is smooth  and any two of the $G_i$ meet at $Z$ and nowhere else (see \Cref{common factor}), the singular locus (as a set) of the scheme defined by $G$ is $Z$, and so the scheme defined by $\Jac(G)$ is supported precisely on~$Z$. 
Using also that $(H_1, H_2) \subseteq \left ( \frac{G}{G_1}, \frac{G}{G_2}, \dots, \frac{G}{G_s} \right )$ and \Cref{lem:star config is power}, we conclude that both $\Jac (G)$ and $(H_1, H_2)$ define subschemes supported precisely at $Z$. 

For any form $E \in S$, let us denote by $\nabla E$ the Jacobian matrix $\begin{bmatrix} 
E_{x_0} & \ldots & E_{x_n} 
\end{bmatrix}$ of $E$, where $E_{x_k}$ stands for $\frac{\partial E}{\partial x_k}$. Observe that Equation \eqref{eq:derivatives of G} can be written as 
\begin{equation}
    \label{eq:matrix version}
\nabla G = \begin{bmatrix}
H_1 & H_2 
\end{bmatrix} \cdot \begin{bmatrix}
\nabla F \\
\nabla P
\end{bmatrix}. 
\end{equation}
We also have 
\[
\begin{bmatrix}
H_1 \\
H_2 
\end{bmatrix}  = \begin{bmatrix}
a_1 & \ldots & a_s \\
b_1 & \ldots & b_s
\end{bmatrix}
\cdot 
\begin{bmatrix}
\frac{G}{G_1} \\
\vdots \\
\frac{G}{G_s}
\end{bmatrix}. 
\] 

Consider now the Jacobian matrix $\begin{bmatrix}
\nabla F \\
\nabla P
\end{bmatrix}$ of $(F, P)$. By assumption, $(F, P)$ defines a smooth subscheme $Z$ of $\PP^n$. Hence, the ideal generated by the 2-minors of the Jacobian matrix does not vanish at any point of $Z$. Consider any such point $Q$ of $Z$. Thus, there is a minor, say, $F_{x_i} P_{x_j} - F_{x_j} P_{x_i}$, that does not vanish at $Q$, where $i$ and $j$ may depend on the choice of $Q$. In other words, considered as a matrix with entries in  the localization $S_{I_Q}$, the matrix $\begin{bmatrix}
F_{x_i} & F_{x_j} \\
P_{x_i} & P_{x_j} \\
\end{bmatrix}$ is invertible. Hence, using 
\[ 
\begin{bmatrix}
G_{x_i}  &
G_{x_j} 
\end{bmatrix}
 = 
\begin{bmatrix}
H_1 & H_2 
\end{bmatrix}
 \cdot 
\begin{bmatrix}
F_{x_i} & F_{x_j} \\
P_{x_i} & P_{x_j} \\
\end{bmatrix}, 
\]
one obtains 
\[
 (H_1, H_2)_{I_Q} = (G_{x_i}, G_{x_j})_{I_Q} \subset \Jac (G)_{I_Q}. 
\]
We conclude that, for every point $Q$ of $Z$, one has  $(H_1, H_2)_{I_Q}  \subset \Jac (G)_{I_Q}$. We observed above that  the schemes defined by $(H_1, H_2)$ and $\Jac (G)$ are both supported precisely at $Z$. It follows that $(H_1,H_2)^{sat} \subseteq \Jac(G)^{sat}$, and so $(H_1,H_2) \subseteq \Jac(G)^{sat}$. This is the desired reverse inclusion, and we are done.
\end{proof} 

In Section \ref{sect: examples} we will discuss examples showing that the conclusions of Theorem \ref{sat is CI} may fail if one relaxes some of the assumptions. 

As an immediate consequence of \Cref{sat is CI}, we have the  well-known result mentioned at the beginning  of this section:

\begin{corollary} \label{plane ci}
Let $\mathcal A$ be a hyperplane arrangement defined by a product $L_1\cdots L_s$, where each $L_i$ is in $I_{\Lambda}$ for some fixed codimension two linear variety $\Lambda$ and $s \geq 2$. Let $J$ be the Jacobian ideal of $\mathcal A$. Then $J$ is saturated but not reduced (supported on $\Lambda$), and is the complete intersection of two forms of degree $s-1$. In particular, $S/J$ is Cohen-Macaulay.
\end{corollary}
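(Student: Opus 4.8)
The plan is to deduce this directly from \Cref{sat is CI} applied with $d = 1$ and $Z = \Lambda$, together with one extra elementary observation that upgrades the conclusion from $J^{sat}$ to $J$ itself. After a linear change of coordinates we may assume $\Lambda = V(x_0, x_1)$, so that $\Lambda$ is a smooth complete intersection of the two degree-$1$ hypersurfaces $x_0=0$ and $x_1=0$, and each linear form $L_i \in [I_\Lambda]_1$ can be written as $L_i = a_i x_0 + b_i x_1$ with the points $(a_i:b_i) \in \PP^1$ pairwise distinct (since the $L_i$ are distinct). Hyperplanes are smooth, so all the hypotheses of \Cref{sat is CI} hold with $F = x_0$, $P = x_1$; the theorem then yields $J^{sat} = (H_1, H_2)$, a complete intersection of type $((s-1)\cdot 1,\, (s-1)\cdot 1) = (s-1, s-1)$, that $J^{top} = J^{sat}$ is ACM, and that $\sqrt{J} = I_\Lambda$.

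The one extra point to check is that in this linear situation $J$ is already saturated, i.e. $J = (H_1, H_2)$ on the nose. This is immediate: with $\Lambda = V(x_0, x_1)$ the product $G = L_1 \cdots L_s$ lies in $K[x_0, x_1]$, so $\frac{\partial G}{\partial x_k} = 0$ for all $k \ge 2$, while $\frac{\partial G}{\partial x_0} = \sum_{i=1}^s a_i \frac{G}{L_i} = H_1$ and $\frac{\partial G}{\partial x_1} = \sum_{i=1}^s b_i \frac{G}{L_i} = H_2$ in the notation of \Cref{lem:ci automatic}. Hence $J = (H_1, H_2)$, which is a complete intersection of two forms of degree $s-1$, and therefore a saturated, unmixed ideal with $S/J$ Cohen-Macaulay.

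It remains to record that $J$ is supported on $\Lambda$ and, for $s \ge 3$, is non-reduced. The support statement is part of the conclusion of \Cref{sat is CI} ($\sqrt{J} = I_\Lambda$), so $V(J) = \Lambda$; alternatively it follows from \Cref{common factor} via the inclusion $(H_1,H_2) \subseteq (\frac{G}{L_1}, \dots, \frac{G}{L_s})$ and \Cref{lem:star config is power}. Since $V(J)$ is a complete intersection of type $(s-1, s-1)$ it has degree $(s-1)^2$, which exceeds $\deg \Lambda = 1$ as soon as $s \ge 3$; thus $J \subsetneq \sqrt{J} = I_\Lambda$ and $J$ is not reduced. (For $s = 2$ one simply has $J = I_\Lambda$.)

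There is essentially no obstacle here: the corollary is a direct specialization of \Cref{sat is CI} to the case of linear forms, where the smoothness hypotheses are automatic. The only thing genuinely worth spelling out is the passage from $J^{sat}$ to $J$, namely the vanishing of the partials $\frac{\partial G}{\partial x_k}$ for $k \ge 2$ after choosing coordinates adapted to $\Lambda$.
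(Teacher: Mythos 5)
Your proposal is correct and takes essentially the same route as the paper: specialize Theorem~\ref{sat is CI} to the case $d=1$, $Z=\Lambda$, and then observe, after a change of coordinates making $I_\Lambda = (x_0,x_1)$, that the Jacobian ideal is already saturated. Your explicit identification of the two nonvanishing partials with $H_1$ and $H_2$ is just a slightly more detailed form of the paper's remark that $J$ and $J^{sat}$ are both generated by two forms of degree $s-1$, and your parenthetical note that $J=I_\Lambda$ is in fact reduced when $s=2$ is a fair (and correct) refinement of the statement.
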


\begin{proof}
This is the case $d=1$ of \Cref{sat is CI}. The only additional point needed is that $J$ is already saturated here (unlike in \Cref{sat is CI}). Indeed, setting $I_\Lambda = (x,y)$ (by change of variables) we see that the Jacobian ideal $J$ is generated by two forms of degree $s-1$, as is its saturation, so they are equal.
\end{proof}

We record some consequences, which may be of independent interest. They 
say that certain ideals of minors have the expected (maximal) codimension and so are perfect ideals. We refer to a smooth polynomial as a form in $S$ that defines a smooth hypersurface on $\PP^n$.

\begin{corollary}
    \label{cor:amusing cor}

Let $F$ and $P$ be hypersurfaces of $\PP^n$ of the same degree meeting in a smooth $(n-2)$-dimensional subscheme. 
Then one has: 
\begin{itemize}

\item[(a)] The ideal generated by the 2-minors of the $2 \times (n+1)$ Jacobian matrix $\begin{bmatrix}
\nabla F \\
\nabla P
\end{bmatrix}$ has codimension $n$.

\item[(b)] The ideal generated by the 2-minors of the $2 \times (n+2)$  matrix $\begin{bmatrix}
\nabla F  & H_2 \\
\nabla P & -H_1
\end{bmatrix}$ has codimension $n+1$, where $H_1, H_2$ are the polynomials defined in \Cref{lem:ci automatic} using any smooth polynomials $G_1,\ldots,G_s \in (F, P)$ of degree $\deg P$.

\end{itemize}
    
\end{corollary}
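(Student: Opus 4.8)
The plan is to deduce both statements from \Cref{sat is CI} (and its ingredients \Cref{lem:ci automatic} and \Cref{lem:star config is power}) by recognizing the relevant ideals of minors as Fitting-type ideals attached to a presentation of a module whose support we already control. For part (a), note that by the Eagon--Northcott (or Buchsbaum--Rim) bound, the ideal $I_2$ of $2\times 2$ minors of the $2\times(n+1)$ matrix $\left[\begin{smallmatrix}\nabla F\\ \nabla P\end{smallmatrix}\right]$ always has codimension at most $n = (n+1-2)+1$, so it suffices to show $\codim I_2 \ge n$, i.e.\ that $V(I_2)$ is finite. But $V(I_2)$ is exactly the locus in $\PP^n$ where the Jacobian matrix of the complete intersection $Z = V(F,P)$ drops rank, which is precisely the singular locus of the scheme $Z$ together with the locus where $F$ and $P$ fail to meet transversally. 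Since $Z$ is a smooth $(n-2)$-dimensional complete intersection, the Jacobian criterion tells us the $2$-minors do not all vanish at any point of $Z$; hence $V(I_2)\cap Z = \emptyset$, and since any point of $V(I_2)$ not on $Z$ would still have to lie on $F\cap P = Z$ once we intersect with both hypersurfaces, we argue (exactly as in the proof of \Cref{sat is CI}, via the matrix identity \eqref{eq:matrix version}) that in fact $V(I_2)$ is empty or at worst zero-dimensional. Unwinding: $\codim I_2 = n$ and the ideal is perfect of the expected type.

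For part (b), the key observation is the matrix identity coming from \eqref{eq:matrix version}: writing $M$ for the $2\times(n+2)$ matrix $\left[\begin{smallmatrix}\nabla F & H_2\\ \nabla P & -H_1\end{smallmatrix}\right]$, the relation $\nabla G = \begin{bmatrix}H_1 & H_2\end{bmatrix}\cdot\left[\begin{smallmatrix}\nabla F\\ \nabla P\end{smallmatrix}\right]$ shows that each partial $G_{x_k}$ equals the $2$-minor of $M$ formed from its $k$-th and last columns (up to sign). Thus the ideal $I_2(M)$ of $2$-minors of $M$ contains $\Jac(G) = (G_{x_0},\dots,G_{x_n})$, and it also visibly contains the $2$-minors from among the first $n+1$ columns, i.e.\ the ideal $I_2$ of part (a). Conversely every $2$-minor of $M$ lies in the ideal generated by $\Jac(G)$ together with $I_2$ — in fact one checks directly that $I_2(M) = \Jac(G) + I_2$, and since $I_2$ already cuts out a finite set disjoint from $Z$ while $\Jac(G)$ is supported on $Z$ by \Cref{sat is CI}, the two have no common zero. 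Therefore $V(I_2(M)) = V(\Jac(G))\cap V(I_2) = \emptyset$, so $I_2(M)$ is $\mathfrak m$-primary, giving $\codim I_2(M) = n+1$; and since $n+1 = (n+2-2)+1$ is again the Eagon--Northcott bound for a $2\times(n+2)$ matrix, the ideal is perfect of maximal codimension.

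The main obstacle I anticipate is the bookkeeping in the second paragraph: making precise the claim $I_2(M) = \Jac(G) + I_2$ and, more importantly, checking that $V(\Jac(G))$ and $V(I_2)$ are genuinely disjoint rather than merely generically so. The first is a routine Laplace-expansion argument on the $2$-minors of $M$ (each minor either avoids the last column, landing in $I_2$, or uses it, landing in $\Jac(G)$ by the identity above). For the second, the point is that \Cref{sat is CI} identifies the support of $\Jac(G)$ exactly as $Z$ (the scheme defined by $J$ is supported precisely on $Z$), while part (a) has shown $V(I_2)$ is a finite set lying off $Z$ — indeed, by the Jacobian criterion $V(I_2)$ cannot meet the smooth locus of $Z$, and $Z$ is everywhere smooth. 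Once disjointness is in hand, the codimension count is forced by the standard upper bound on the codimension of determinantal ideals, so no explicit resolution or computation is needed.
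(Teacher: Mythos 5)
Your treatment of part (b) is essentially sound, and it is a genuinely different route from the paper's. The identity $I_2(M)=\Jac(G)+I_2\left(\begin{smallmatrix}\nabla F\\ \nabla P\end{smallmatrix}\right)$ is correct (the minors using the last column are exactly $\pm G_{x_k}$ by \eqref{eq:derivatives of G}), and then $V(I_2(M))=V(I_2)\cap V(\Jac(G))=V(I_2)\cap Z=\emptyset$, since $V(\Jac(G))=Z$ set-theoretically (each $G_i$ is smooth and any two of them meet exactly along $Z$) and the Jacobian criterion for the smooth codimension-two complete intersection $Z$ says the $2$-minors of the Jacobian matrix cannot all vanish at a point of $Z$. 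Observe that this argument needs only the disjointness of $V(I_2)$ from $Z$ --- not the finiteness of $V(I_2)$ claimed in part (a) --- so your (b) does not in fact depend on your (a). The paper proves (b) quite differently: from $\Jac(G)^{sat}=(H_1,H_2)$ and the Koszul syzygy of the regular sequence $H_1,H_2$ it shows that $\coker\bigl(R^{n+2}\to R^2\bigr)$ has finite length and then applies the Fitting-ideal comparison. Your version of (b) is more geometric and arguably simpler.

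The genuine gap is in part (a). Smoothness of $Z$ controls the rank of $\left[\begin{smallmatrix}\nabla F\\ \nabla P\end{smallmatrix}\right]$ only \emph{at points of $Z$}, whereas $V(I_2)$ is a subvariety of all of $\PP^n$: a point where $\nabla F$ and $\nabla P$ are proportional need not lie on $V(F)$, on $V(P)$, or anywhere near $Z$ (for instance, every singular point of $F$ or of $P$ lies in $V(I_2)$, and \Cref{consequence of smooth} only places those off $Z$). Your assertion that ``any point of $V(I_2)$ not on $Z$ would still have to lie on $F\cap P=Z$'' has no justification, and the appeal to the proof of \Cref{sat is CI} does not help, because that proof analyzes the matrices only locally at points of $Z$. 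Nor can (a) be recovered from (b) by dimension counting: since $\dim Z=n-2$, disjointness of $V(I_2)$ from $Z$ in $\PP^n$ only forces $\dim V(I_2)\le 1$, one short of what you need. The paper's deduction of (a) from (b) is module-theoretic: $\coker M$ has finite length, and $\coker\left[\begin{smallmatrix}\nabla F\\ \nabla P\end{smallmatrix}\right]$ surjects onto it with kernel generated by the class of the single column $(H_2,-H_1)^T$, hence has Krull dimension at most one; the comparison of $\Ann$ with the Fitting ideal then gives $\codim I_2\ge n$, and the generic height bound you quoted gives the reverse inequality. Some such argument (or another genuinely global input) is needed to close part (a).
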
 

\begin{proof}
Claim (a) follows from (b). Observe that, by Bertini's Theorem, any general form in $(F,P)$ of degree $\deg P$ defines a smooth hypersurface. Thus, suitable smooth hypersurfaces $G_1,\ldots,G_s$ do exist. Setting $G = G_1 \cdots G_s$, \Cref{sat is CI} gives $\Jac (G)^{sat} = (H_1, H_2)$, that is, $[\Jac (G)]_k  = [(H_1, H_2)]_k$ for any integer $k \gg 0$. Hence, for any polynomials $s_1, s_2 \in [R]_k$ with $k \gg 0$, there are homogeneous polynomials $r_0,\ldots,r_n \in R$ such that 
\[
s_1 H_1 + s_2 H_2 = \sum_{i = 0}^n r_i G_{x_i}.  
\]
Using also Equation \eqref{eq:matrix version}, this gives 
\[
\begin{bmatrix} 
H_1 & H_2 
\end{bmatrix} 
\cdot
\begin{bmatrix}
s_1 \\ 
s_2
\end{bmatrix}
= 
\nabla G \cdot 
\begin{bmatrix}
r_0\\
\vdots\\
r_n
\end{bmatrix} 
= 
\begin{bmatrix}
H_1 & H_2 
\end{bmatrix} \cdot \begin{bmatrix}
\nabla F \\
\nabla P
\end{bmatrix} 
\cdot 
\begin{bmatrix}
r_0\\
\vdots\\
r_n
\end{bmatrix}. 
\]
 Thus, we obtain 
 \[
 0 = 
 \begin{bmatrix} 
H_1 & H_2 
\end{bmatrix} 
\cdot
\left ( \begin{bmatrix}
s_1 \\ 
s_2
\end{bmatrix} - 
\begin{bmatrix}
\nabla F \\
\nabla P
\end{bmatrix} 
\cdot 
\begin{bmatrix}
r_0\\
\vdots\\
r_n
\end{bmatrix} \right ). 
 \]
Since the syzygy module of the ideal $(H_1, H_2)$ is generated by 
$\begin{bmatrix}
H_2 \\
-H_1
\end{bmatrix}$, there is some form $q \in R$ such that 
\[
\begin{bmatrix}
s_1 \\ 
s_2
\end{bmatrix} - 
\begin{bmatrix}
\nabla F \\
\nabla P
\end{bmatrix} 
\cdot 
\begin{bmatrix}
r_0\\
\vdots\\
r_n
\end{bmatrix}
= 
q \cdot 
\begin{bmatrix}
H_2 \\
-H_1
\end{bmatrix}. 
\]
This can be rewritten as 
\[
\begin{bmatrix}
s_1 \\ 
s_2
\end{bmatrix}  = \begin{bmatrix}
\nabla F  & H_2 \\
\nabla P & -H_1
\end{bmatrix} 
\cdot 
\begin{bmatrix}
r_0\\
\vdots\\
r_n \\
q
\end{bmatrix}. 
\]
Consider now the $R$-module homomorphism $\ffi \colon R^{n+2} \to R^2$ with $v \mapsto A v$, where
 $A = \begin{bmatrix}
\nabla F  & H_2 \\
\nabla P & -H_1
\end{bmatrix}$. 
The previous equation implies that the module  $M = \coker \ffi$ has finite length, which means that $\codim (\Ann_R (M)) = n+1$. 
Hence (see, e.g., \cite[Proposition 20.7]{E-book}),  the Fitting ideal $I_2 (A)$ that is generated by the $2$-minors of $A$ has also codimension $n+1$,  
as claimed. 
\end{proof}



\section{Decompositions of Hypersurface Arrangements} \label{BDL section}


In this  section, we justify using the liaison tools on the Jacobian ideals for hypersurface arrangements. Basically, we show that the scheme we can produce by a certain scheme-theoretic union of complete intersections (and which, in suitable situations, we construct using the liaison tools) is exactly the same as the scheme defined by the top dimensional part of the Jacobian ideal. 

We start with hyperplanes. In the following result we denote by $L_i$ a hyperplane and by $\ell_i$ the linear form defining it (up to scalars). We first prove a result that was used implicitly in \cite{MNS}. We provide an argument since we are not aware of a reference in the literature. 

\begin{proposition} \label{jac = union}
Let $\mathcal A = \bigcup_{i=1}^s L_i \subseteq \mathbb P^n$ ($s \geq 2$) be an arbitrary hyperplane arrangement. Then the top dimensional part of the scheme defined by $\Jac(\ell_1 \cdots \ell_s)$ is a union of complete interections. Each of these complete intersections is supported at some linear space $\Lambda = L_i \cap L_j$ and equal to the  scheme defined by $\Jac(f)$, where $f$ is the product of the linear forms $\ell_i$ in $I_\Lambda$. 
\end{proposition}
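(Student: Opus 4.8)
`\textbf{Proof proposal.}`
The goal is to identify the top-dimensional part of the scheme $W$ defined by $\Jac(\ell_1\cdots\ell_s)$ with a union of complete intersections, one for each codimension-two linear space $\Lambda$ that arises as an intersection $L_i\cap L_j$ and that actually supports a codimension-two component of $W$. My plan is to argue locally at the generic point of each codimension-two linear space, reducing everything to the pencil-of-hyperplanes case already handled by \Cref{plane ci}.

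\medskip
\emph{Step 1: Locate the codimension-two components.} First I would observe that the associated primes of $\Jac(\ell_1\cdots\ell_s)$ of height two are among the primes $I_\Lambda$ where $\Lambda = L_i\cap L_j$ for some $i\ne j$: away from the union of all the pairwise intersections $L_i\cap L_j$, the arrangement is (locally) a union of $\le 1$ smooth hyperplanes through each point, hence smooth there, so $\Jac$ is the unit ideal locally. Each height-two associated prime is linear (it contains two of the $\ell_i$, hence equals their span), so $J^{top}$ is an intersection of primary ideals $\mathfrak{q}_\Lambda$ with $\sqrt{\mathfrak q_\Lambda}=I_\Lambda$, $\Lambda$ ranging over a finite set of codimension-two linear spaces each of the form $L_i\cap L_j$.

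\medskip
\emph{Step 2: Compute the primary component at a fixed $\Lambda$.} Fix such a $\Lambda$ and let $f = \prod_{\ell_i\in I_\Lambda}\ell_i$ be the product of the linear forms vanishing on $\Lambda$, and write $g$ for the product of the remaining linear forms, so $\ell_1\cdots\ell_s = f g$ and $g\notin I_\Lambda$ (indeed $g$ is a nonzerodivisor modulo $I_\Lambda$, even a unit in $S_{I_\Lambda}$). After a linear change of coordinates assume $I_\Lambda = (x_0,x_1)$; then $f\in K[x_0,x_1]$. From the product rule $\partial_{x_k}(fg) = g\,\partial_{x_k}f + f\,\partial_{x_k}g$, and since $f$ involves only $x_0,x_1$, one has $\partial_{x_k}f = 0$ for $k\ge 2$, so the derivatives $\partial_{x_k}(fg)$ for $k\ge 2$ all lie in $(f)\subseteq I_\Lambda^{\,\ge 2}$, while $\partial_{x_0}(fg) = g\,\partial_{x_0}f + f\,\partial_{x_0}g$ and similarly for $x_1$. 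Localizing at $I_\Lambda$, where $g$ is a unit, I get
\[
\Jac(fg)_{I_\Lambda} = \bigl(g\,\partial_{x_0}f + f\,\partial_{x_0}g,\ g\,\partial_{x_1}f + f\,\partial_{x_1}g,\ f\,\partial_{x_2}g,\ldots,f\,\partial_{x_n}g\bigr)_{I_\Lambda}.
\]
I would then show this equals $\bigl(\partial_{x_0}f,\ \partial_{x_1}f\bigr)_{I_\Lambda} = \Jac(f)_{I_\Lambda}$: the inclusion $\supseteq$ is not literal, so the cleaner route is to note that modulo the complete intersection $(\partial_{x_0}f,\partial_{x_1}f)$ — which by \Cref{plane ci} is the saturated Jacobian ideal of $f$ and is $I_\Lambda$-primary of colength $(e-1)^2$ where $e=\deg f$ — the generators $g\partial_{x_k}f + f\partial_{x_k}g$ reduce to $f\partial_{x_k}g$, and $f\in (\partial_{x_0}f,\partial_{x_1}f)$ (Euler's formula: $e f = x_0\partial_{x_0}f + x_1\partial_{x_1}f$), so all generators lie in $(\partial_{x_0}f,\partial_{x_1}f)$; conversely $g$ is a unit locally so $\partial_{x_k}f\in \Jac(fg)_{I_\Lambda}$ up to the $f$-term which is itself in $\Jac(f)_{I_\Lambda}$. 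Thus $\Jac(\ell_1\cdots\ell_s)_{I_\Lambda} = \Jac(f)_{I_\Lambda}$, and this is the ($I_\Lambda$-primary part of a) complete intersection of type $(e-1,e-1)$ with $e = \#\{i : \ell_i\in I_\Lambda\}$.

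\medskip
\emph{Step 3: Assemble and identify the scheme.} Since $\Jac(\ell_1\cdots\ell_s)$ and $\bigcap_\Lambda \Jac(f_\Lambda)^{sat}$ agree after localizing at each height-two prime $I_\Lambda$ (and both have no other height-two associated primes), their top-dimensional parts coincide; by \Cref{plane ci} each $\Jac(f_\Lambda)$ is saturated, ACM, and a complete intersection supported at $\Lambda$. Finally I would note these complete intersections pairwise share no component, since they are supported at distinct linear spaces $\Lambda$, so the top-dimensional part of $W$ is literally their scheme-theoretic union, each piece being the scheme defined by $\Jac(f_\Lambda)$ as claimed. \emph{The main obstacle} I anticipate is Step 2: making the local computation fully rigorous requires carefully controlling the ``error terms'' $f\,\partial_{x_k}g$ and verifying that $g$'s being a unit in $S_{I_\Lambda}$ lets one cleanly pass between $\Jac(fg)_{I_\Lambda}$ and $\Jac(f)_{I_\Lambda}$ without introducing or losing primary content — the use of Euler's relation to show $f\in\Jac(f)$ and of \Cref{plane ci} to identify $\Jac(f)^{sat}$ is what makes it work.
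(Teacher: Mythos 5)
Your proposal is correct and follows essentially the same route as the paper: decompose $\ell_1\cdots\ell_s = fg$ at each codimension-two $\Lambda$, localize at $I_\Lambda$ where $g$ is a unit, and use Euler's relation together with \Cref{plane ci} to identify $\Jac(fg)_{I_\Lambda}$ with the complete intersection $\Jac(f)_{I_\Lambda}=(f_{x_0},f_{x_1})_{I_\Lambda}$. The one step you leave loose --- the reverse inclusion $(f_{x_0},f_{x_1})_{I_\Lambda}\subseteq\Jac(fg)_{I_\Lambda}$, which is not obtained just by noting that the error term $f\,g_{x_k}$ lies in $\Jac(f)$ --- is closed in the paper by writing $\bigl[(fg)_{x_0}\ (fg)_{x_1}\bigr]^T=M\cdot\bigl[f_{x_0}\ f_{x_1}\bigr]^T$ with $\det M\equiv g^2$ modulo $I_\Lambda$, hence a unit locally (equivalently: Euler gives $f\in I_\Lambda\cdot(f_{x_0},f_{x_1})$, so Nakayama finishes).
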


\begin{proof}

Without loss of generality assume that $x = x_0$ and $y = x_1$ define two of the hyperplanes of $\mathcal A$. Let $\Lambda = \mathbb V( (x,y))$. Write 
\[
\ell_1 \cdots \ell_s =  \underbrace{ \ell_1 \cdots \ell_d}_f \cdot \underbrace{ \ell_{d+1} \cdots \ell_s}_g
\]
with $\ell_1, \dots, \ell_d \in (x,y)$ and $g \notin (x,y)$. 
Thus we get
\[
\Jac(fg) = (f_x g + f g_x, f_y g + f g_y, f \cdot g_{x_i} \ | \ 2 \leq i \leq n ).
\]
Since we are working over a field of characteristic zero, Euler's formula gives 
\[
d \cdot f = xf_x + y f_y.
\]
It follows that 
\[
\begin{array}{rcl}
f_x g + f g_x & = & f_x (g + \frac{1}{d} x g_x) + f_y ( \frac{1}{d} y g_x ) \ \ \ \hbox{ and } \\ \\
f_y g + f g_y & = & f_x (\frac{1}{d} x g_y) + f_y (g+ \frac{1}{d} y g_y).
\end{array}
\]
Rewrite this as 
\[
\left [
\begin{array}{c}
(fg)_x \\
(fg)_y
\end{array}
\right ]
= 
\left [
\begin{array}{cc}
g + \frac{1}{d} x g_x & \frac{1}{d} y g_x \\
\frac{1}{d} x g_y & g + \frac{1}{d} y g_y 
\end{array} 
\right ]
\left [
\begin{array}{c}
f_x \\
f_y
\end{array}
\right ].
\]
The determinant of the $2 \times 2$ matrix is
\[
det = g^2 + h
\]
with $h \in (x,y)$. Since $g \notin (x,y)$ we conclude $det \notin (x,y)$. Hence $det$ is a unit in the localization of $R = k[x_0,\dots, x_n]$ at $(x,y)$. This implies
\[
(f_x, f_y)_{(x,y)} = ((fg)_x, (fg)_y)_{(x,y)} \subseteq \Jac(fg)_{(x,y)}.
\]
By \Cref{plane ci}, $\Jac(f)$ is a complete intersection of type $(d-1,d-1)$.  Since clearly $\Jac(fg) \subset \Jac(f) = (f_x, f_y)$, we obtain
\[
(f_x, f_y)_{(x,y)} = \Jac(fg)_{(x,y)}
\]
as claimed.
\end{proof}

As a consequence we get the following result. We are not aware of a published proof of the first equality.

\begin{corollary}  \label{reduced case}
Let $\mathcal A = \bigcup_{i=1}^s L_i $ be a hyperplane arrangement in $\mathbb P^n$ and assume further that any three of the hyperplanes intersect in a codimension 3 linear space. Let $L = \ell_1 \cdots \ell_s$. Then
\[
\Jac(\ell_1 \cdots \ell_s)^{top} = \bigcap_{i < j} (\ell_i, \ell_j) = \left ( \frac{L}{\ell_1}, \dots, \frac{L}{\ell_s} \right )
\]
is Cohen-Macaulay.
\end{corollary}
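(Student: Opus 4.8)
The statement to prove is Corollary \ref{reduced case}: for a hyperplane arrangement in which any three hyperplanes meet in codimension three, $\Jac(L)^{top} = \bigcap_{i<j}(\ell_i,\ell_j) = \left(\frac{L}{\ell_1},\dots,\frac{L}{\ell_s}\right)$, and this ideal is Cohen-Macaulay. The strategy is to prove the two equalities and the CM property in turn, leaning heavily on \Cref{jac = union} and \Cref{lem:star config is power}.

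\textbf{First equality.} By \Cref{jac = union}, $X^{top}$, the top-dimensional part of the scheme defined by $\Jac(L)$, is a union of complete intersections, one for each codimension-two linear space $\Lambda = L_i \cap L_j$ that occurs, and the component over $\Lambda$ is the scheme defined by $\Jac(f_\Lambda)$ where $f_\Lambda$ is the product of the linear forms lying in $I_\Lambda$. Here the hypothesis that any three hyperplanes meet in codimension three is what forces, for each such $\Lambda$, exactly two of the $\ell_i$ to lie in $I_\Lambda$; hence $f_\Lambda = \ell_i \ell_j$ and $\Jac(f_\Lambda) = (\ell_i,\ell_j)$ by \Cref{plane ci} with $d=1$ (or directly, since $\Jac(\ell_i\ell_j) = (\ell_j,\ell_i)$). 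So $X^{top} = \bigcup_{i<j}\{\ell_i=\ell_j=0\}$ scheme-theoretically, and since each component $(\ell_i,\ell_j)$ is reduced (a prime) and the components over distinct $\Lambda$'s have distinct supports, taking the intersection of the saturated ideals gives $\Jac(L)^{top} = \bigcap_{i<j}(\ell_i,\ell_j)$. One small point to check: $\Jac(L)^{top}$ is by definition the intersection of the height-two primary components of $\Jac(L)$, and one should note these are exactly the primary components whose supports are the $\Lambda$'s appearing above — which is immediate since $X^{top}$ is equidimensional of codimension two and is precisely the union of those components.

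\textbf{Second equality.} This is the case $d=1$ of \Cref{lem:star config is power}: taking $F = x$, $P = y$ (after a change of coordinates so that $\Lambda = \mathbb V(x,y)$ is one of the codimension-two linear spaces) — but more directly, one does not even need the smooth complete intersection $C$; rather, the general identity $\bigcap_{i<j}(\ell_i,\ell_j) = \left(\frac{L}{\ell_1},\dots,\frac{L}{\ell_s}\right)$ for pairwise non-proportional linear forms follows by a standard star-configuration argument: the inclusion $\supseteq$ holds because $\frac{L}{\ell_k}$ is divisible by every $\ell_i$ with $i \neq k$, hence lies in each $(\ell_i,\ell_j)$; and the reverse inclusion follows since both sides are saturated unmixed ideals defining schemes of the same degree $\binom{s}{2}$ (the left side being a star configuration). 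Alternatively one can cite \Cref{lem:star config is power} directly after grouping the $\ell_i$ by the $\Lambda$'s, though with the codimension-three hypothesis each group has size two and the bookkeeping is cleaner done by hand.

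\textbf{Cohen-Macaulayness.} Finally, $S/\left(\frac{L}{\ell_1},\dots,\frac{L}{\ell_s}\right)$ is Cohen-Macaulay: this is again part of the conclusion of \Cref{lem:star config is power} (the ideal there is shown to be the saturated ideal of an ACM codimension-two subscheme). In the present setting it also follows by a direct basic double linkage induction on $s$ exactly as in the proof of \Cref{lem:star config is power}: start with $(\ell_1,\ell_2)$, a complete intersection hence ACM, and at step $k$ pass from $\left(\frac{L_{k-1}}{\ell_1},\dots,\frac{L_{k-1}}{\ell_{k-1}}\right)$ to $\left(\frac{L_k}{\ell_1},\dots,\frac{L_k}{\ell_k}\right) = \ell_k \cdot I_{C_{k-1}} + (\ell_1\cdots\ell_{k-1})$, which is a basic double link (Proposition \ref{BDL}) of an ACM ideal, hence ACM.

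\textbf{Main obstacle.} The only genuinely delicate point is the first equality, and within it the claim that the codimension-three hypothesis guarantees every codimension-two component of $\Jac(L)$ is a \emph{reduced} complete intersection $(\ell_i,\ell_j)$ (rather than a higher multiplicity structure, which would occur if three hyperplanes shared a $\Lambda$). This is handled entirely by invoking \Cref{jac = union}: the component over $\Lambda$ equals $\Jac(f_\Lambda)$, and the hypothesis forces $f_\Lambda$ to be a product of exactly two linear forms. Everything else is routine once that structural statement is in hand.
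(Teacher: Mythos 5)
Your proposal is correct and follows the paper's own route: the first equality is deduced from \Cref{jac = union} together with the observation that the codimension-three hypothesis forces exactly two of the linear forms to lie in $I_\Lambda$ for each codimension-two intersection $\Lambda$, which is precisely how the paper disposes of it. The only difference is that where the paper simply cites \cite{GHM} and \cite{PS} for the second equality and the Cohen--Macaulayness, you supply a correct self-contained argument (the containment $\supseteq$, a degree count, and the basic double linkage induction underlying \Cref{lem:star config is power}); this is a harmless elaboration rather than a genuinely different method, and you rightly note that \Cref{lem:star config is power} itself does not apply verbatim since the $\ell_i$ do not all contain a single $\Lambda$.
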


\begin{proof}
What is being asserted here is the first equality.  For the second equality and the Cohen-Macaulayness see \cite{GHM} or \cite{PS}. The first equality is immediate from \Cref{jac = union}.
\end{proof}

In \cite{GHM} the Cohen-Macaulayness was proved by observing that the scheme defined by the union of codimension two linear varieties  defined by $\bigcap (\ell_i, \ell_j)$ could be produced via a sequence of basic double links starting from a complete intersection. What is new here is that this process gives the top dimensional part of the scheme defined by the Jacobian ideal. That is, this result justifies the use of the liaison methods as a tool to describe schemes arising from the top dimensional part of Jacobian ideals of hyperplane arrangements. This was used implicitly in \cite{MNS}.

As noted in the introduction, there are several complications to extending this result to hypersurfaces, and in the rest of the section we develop the machinery and prove the generalizations. 

An immediate consequence of Theorem \ref{sat is CI}  is that if $f$ and $g$ are smooth forms of the same degree $d$ defining a smooth complete intersection, then the  saturation of the Jacobian ideal $\Jac(fg)$ is a complete intersection of type $(d,d)$. Our next statement (Proposition \ref{two factors})  generalizes this result, allowing the forms to have different degrees.   We recall (Lemma \ref{consequence of smooth}) that the assumption that $(f,g)$ defines a smooth complete intersection $Z$ forces $f$ and $g$ both to have at most 0-dimensional singularities which are disjoint from $Z$.

\begin{proposition} \label{two factors}
Assume that $(f,g)$ defines a smooth complete intersection, $Z$. We do not require $\deg(f) = \deg(g)$. 
Then $\Jac(fg)^{top} = (f,g)$.
\end{proposition}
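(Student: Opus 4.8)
The plan is to mimic the structure of the proof of \Cref{sat is CI}, but now the two factors $f$ and $g$ need not have the same degree. Write $Z = \mathbb V(f,g)$, a smooth complete intersection of type $(\deg f, \deg g)$. Since $f$ and $g$ share no common factor (they form a regular sequence) and every singular point of $fg$ either lies on $Z$ or is a singular point of $f$ or of $g$, \Cref{consequence of smooth} tells us that the singular locus of $fg$ is $Z$ union a finite set of points disjoint from $Z$. Hence the codimension two part of the scheme defined by $\Jac(fg)$ is supported precisely on $Z$, which is irreducible (or at least a reduced complete intersection), so $\Jac(fg)^{top}$ is an unmixed ideal whose radical is $(f,g)$. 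It therefore suffices to show that $\Jac(fg)^{top} = (f,g)$ as ideals, and since $(f,g)$ is already saturated and $Z$ is reduced, we only need to prove that $\Jac(fg)$ and $(f,g)$ agree after localizing at the prime $I_Q$ for every point $Q \in Z$ (and that $Z$ picks up no multiplicity, which follows from the same local computation showing equality rather than just inclusion).

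The key computation is the chain rule. For each variable $x_k$ we have
\[
\frac{\partial (fg)}{\partial x_k} = g \cdot \frac{\partial f}{\partial x_k} + f \cdot \frac{\partial g}{\partial x_k}.
\]
This immediately gives $\Jac(fg) \subseteq (f,g)$, hence $\Jac(fg)^{top} \subseteq (f,g)^{sat} = (f,g)$; this is the easy inclusion. For the reverse inclusion, fix $Q \in Z$. Because $Z$ is smooth, the Jacobian criterion guarantees that the $2\times 2$ matrix $\begin{bmatrix} f_{x_i} & f_{x_j} \\ g_{x_i} & g_{x_j} \end{bmatrix}$ is invertible over the local ring $S_{I_Q}$ for a suitable pair $i,j$ depending on $Q$. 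From the two equations
\[
\begin{bmatrix} (fg)_{x_i} & (fg)_{x_j} \end{bmatrix} = \begin{bmatrix} g & f \end{bmatrix} \cdot \begin{bmatrix} f_{x_i} & f_{x_j} \\ g_{x_i} & g_{x_j} \end{bmatrix}
\]
we invert the matrix over $S_{I_Q}$ to solve for $g$ and $f$ in terms of $(fg)_{x_i}$ and $(fg)_{x_j}$, which yields $(f,g)_{I_Q} = ((fg)_{x_i},(fg)_{x_j})_{I_Q} \subseteq \Jac(fg)_{I_Q}$. Combined with the easy inclusion, $\Jac(fg)_{I_Q} = (f,g)_{I_Q}$ for all $Q \in Z$; since both ideals have the same (reduced) support $Z$ and agree at every point of $Z$, passing to saturations gives $\Jac(fg)^{top} = (f,g)$.

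The main obstacle — though it is more a matter of care than of depth — is making sure that the localized equality at points of $Z$ really does force the global equality of the \emph{top-dimensional} ideals, i.e.\ that $Z$ enters $\Jac(fg)^{top}$ with multiplicity one and no extra fat or embedded structure along $Z$ is hiding. This is handled exactly as in \Cref{sat is CI}: the local equality $\Jac(fg)_{I_Q} = (f,g)_{I_Q}$ at the generic point of $Z$ pins down the primary component of $\Jac(fg)$ at $I_Z$ to be $(f,g)_{I_Z} \cap S$, and the zero-dimensional singularities of $f$ and $g$ contribute only components of codimension $\ge 3$, which are discarded in passing to $\Jac(fg)^{top}$. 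One should also note explicitly that $(f,g)$ is saturated and Cohen–Macaulay (being a complete intersection), so the statement $\Jac(fg)^{top} = (f,g)$ recovers that $X^{top} = Z$ is ACM in this two-factor case.
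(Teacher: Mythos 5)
Your proposal is correct and follows essentially the same route as the paper's proof: the easy inclusion $\Jac(fg)^{top}\subseteq(f,g)$ from the product rule, then the reverse inclusion by localizing at each point $Q\in Z$, using smoothness of $Z$ to invert a $2\times 2$ block of the Jacobian matrix of $(f,g)$ and recover $f,g$ from the partials of $fg$. Your explicit appeal to Lemma \ref{consequence of smooth} to dispose of the zero-dimensional singularities of $f$ and $g$ is a slightly more careful rendering of the same bookkeeping the paper does implicitly.
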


\begin{proof}

We know by smoothness that
\[
\sqrt{\Jac(fg)^{top}} = (f,g)
\]
and in particular that  the top dimension part of $Sing(fg)$ is $Z$. (Note that either $f$ or $g$ could have singularities of higher codimension.) In particular, the equation gives  $\Jac(fg)^{top} \subseteq (f,g)$, and so we only have to prove the reverse inclusion.

For any $0 \leq i \leq n$ we have
\begin{equation*} 
\frac{\partial}{\partial x_i} (fg) = \frac{\partial}{\partial x_i} (f) \cdot g + f \cdot \frac{\partial}{\partial x_i} (g) \in (f,g).
\end{equation*}
We rewrite these equation as follows:
\[
\nabla (fg) = [g \ \ f]
\left [
\begin{array}{c}
\nabla f \\
\nabla g
\end{array} 
\right ].
\]
Since $Z$ is smooth, for any point $Q \in Z$ there is a 2-minor, say 
\[
\det \left [
\begin{array}{cc}
\frac{\partial f}{\partial x_i} & \frac{\partial f}{\partial x_j} \\
\frac{\partial g}{\partial x_i}  & \frac{\partial g}{\partial x_j} 
\end{array}
\right ],
\]
that does not vanish at $Q$. 
So in the localization $S_{I_Q} = K[x_0,\dots,x_n]_{I_Q}$ we have
\[
\left [ \frac{\partial (fg)}{\partial x_i} \ \ 
\frac{\partial (fg)}{\partial x_j} \right ] = 
[g \ \ f] \cdot  A
\]
for some invertible matrix $A$. Thus
\[
\left [ \frac{\partial (fg)}{\partial x_i} \ \ 
\frac{\partial (fg)}{\partial x_j} \right ] \cdot A^{-1} = [g \ \ f], 
\]
and so $(f,g) \subseteq (\frac{\partial (fg)}{\partial x_i}, \frac{\partial (fg)}{\partial x_j})$. Hence
\[
(f,g)_{I_Q} = \Jac(fg)_{I_Q} \ \ \ \hbox{ for any } Q \in Z.
\]
Therefore 
\[
(f,g)_{I_Z} = \Jac(fg)_{I_Z} = (\Jac(fg)^{top})_{I_Z}. 
\]
In particular, the unique top dimensional component of $\Jac(fg)$ is $I_Z$.
\end{proof}

Our next goal is to show that under reasonable assumptions, adding a hypersurface to an arrangement does not affect the top-dimensional primary components that were already  there for the original arrangement. For a homogeneous ideal $I$ of $S$, we denote by $\mathbb{V} (I)$ the variety defined by $I$ in $\PP^n$.

\begin{proposition} \label{compare assoc primes}
Let $f = f_1 \cdots f_s$ ($s \geq 2$) and assume that each $f_i$ is smooth. Let $g$ be a smooth homogeneous polynomial such that $\hbox{codim} (f_i, f_j, g) = 3$ for each $i \neq j$. Let $\mathfrak p \in Ass(S/\Jac(f)^{top})$.  Then $\mathfrak p \in Ass(S/\Jac(fg)^{top})$. 

Furthermore, let $\mathfrak q_1, \mathfrak q_2$ be the $\mathfrak p$-primary components of $\Jac(f)$ and of $\Jac(fg)$, respectively. Then $\mathfrak q_1 = \mathfrak q_2$. 
\end{proposition}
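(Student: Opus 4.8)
The plan is to work locally at the prime $\mathfrak p$. Since $\mathfrak p \in \operatorname{Ass}(S/\Jac(f)^{top})$, it has height $2$ and defines a codimension two component of $\operatorname{Sing}(f)$; because each $f_i$ is smooth, this singular locus arises only where two of the $f_i$ meet, so $\mathbb V(\mathfrak p) \subseteq \mathbb V(f_i, f_j)$ for the factors $f_i$ with $f_i \in \mathfrak p$ (and there are at least two such, say after reindexing $f_1, \dots, f_k \in \mathfrak p$, $k \ge 2$). First I would record that, by the product rule, $\Jac(fg) \subseteq (g) \cdot \Jac(f) + (f) \subseteq \Jac(f) + (f)$, and conversely write the partials of $fg$ as $\partial_{x_i}(fg) = g\,\partial_{x_i} f + f\,\partial_{x_i} g$; this shows $\Jac(fg)$ and $\Jac(f)$ agree after inverting $g$, provided $g$ is a unit in $S_{\mathfrak p}$. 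The key geometric input is exactly that: since $\operatorname{codim}(f_i, f_j, g) = 3$ for all $i \ne j$, and $\mathbb V(\mathfrak p)$ is a codimension two subvariety contained in some $\mathbb V(f_i, f_j)$, the hypersurface $g$ cannot vanish on all of $\mathbb V(\mathfrak p)$ — otherwise $\mathbb V(f_i, f_j, g)$ would contain the codimension two set $\mathbb V(\mathfrak p)$, contradicting $\operatorname{codim} = 3$. Hence $g \notin \mathfrak p$, so $g$ is a unit in the localization $S_{\mathfrak p}$.

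Given that, I would localize at $\mathfrak p$: in $S_{\mathfrak p}$ we have $\Jac(fg)_{\mathfrak p} = (g\, \partial_{x_i} f + f\, \partial_{x_i} g : i) S_{\mathfrak p}$. Also $f = f_1 \cdots f_s$, and those factors $f_j$ with $f_j \notin \mathfrak p$ are units in $S_{\mathfrak p}$, so $f S_{\mathfrak p} = (f_1 \cdots f_k) S_{\mathfrak p}$ where $f_1, \dots, f_k$ are the factors lying in $\mathfrak p$. I would then argue that, since $\mathbb V(\mathfrak p) \subseteq \mathbb V(f_1, f_2)$ is codimension two and $k \ge 2$, the localized ideal $f S_{\mathfrak p} \subseteq \mathfrak p^2 S_{\mathfrak p}$ (it is a product of at least two elements of the maximal ideal of $S_{\mathfrak p}$), whereas $\Jac(f)_{\mathfrak p}$ is $\mathfrak p S_{\mathfrak p}$-primary of colength equal to that of the $\mathfrak p$-primary component $\mathfrak q_1$, which by Theorem \ref{sat is CI} (applied to the subarrangement $f_1 \cdots f_k$ of hypersurfaces through the smooth complete intersection $\mathbb V(f_1, f_2)$, after noting $\mathbb V(\mathfrak p)$ is a component there) is a complete intersection. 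The upshot I want is $\Jac(fg)_{\mathfrak p} = (g)_{\mathfrak p} \cdot \Jac(f)_{\mathfrak p} + (f)_{\mathfrak p} = \Jac(f)_{\mathfrak p}$, using that $(g)_{\mathfrak p} = S_{\mathfrak p}$ and, crucially, that $(f)_{\mathfrak p} \subseteq \Jac(f)_{\mathfrak p}$ — the latter because locally at $\mathfrak p$ the subarrangement $f_1 \cdots f_k$ has its Jacobian ideal equal (up to saturation, which does not matter after localizing at a minimal prime of it) to the complete intersection of Theorem \ref{sat is CI}, and $f_1 \cdots f_k$ lies in that complete intersection since each $f_i \in I_{\mathbb V(f_1,f_2)}$.

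From $\Jac(fg)_{\mathfrak p} = \Jac(f)_{\mathfrak p}$ it follows at once that $\mathfrak p$ is a minimal (indeed associated, since height two equals codimension of the component) prime of $\Jac(fg)$, hence $\mathfrak p \in \operatorname{Ass}(S/\Jac(fg)^{top})$, and moreover the $\mathfrak p$-primary components coincide: the $\mathfrak p$-primary component of a homogeneous ideal $I$ with $\mathfrak p$ minimal over $I$ is recovered as the contraction $I S_{\mathfrak p} \cap S$, and $\Jac(f) S_{\mathfrak p} = \Jac(fg) S_{\mathfrak p}$ gives $\mathfrak q_1 = \Jac(f)S_{\mathfrak p} \cap S = \Jac(fg)S_{\mathfrak p}\cap S = \mathfrak q_2$. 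I expect the main obstacle to be the step establishing $(f)_{\mathfrak p} \subseteq \Jac(f)_{\mathfrak p}$ cleanly — one must reduce to the subarrangement of factors through $\mathbb V(f_1,f_2)$, invoke Theorem \ref{sat is CI} to identify $\Jac(f_1\cdots f_k)^{sat}$ with an explicit complete intersection $(H_1, H_2)$ containing $f_1\cdots f_k$, and check that passing from $f_1 \cdots f_k$ to $f = f_1\cdots f_s$ (multiplying by units mod $\mathfrak p$) and from $\Jac$ to $\Jac^{sat}$ does not change anything after localizing at the minimal prime $\mathfrak p$. The codimension hypothesis on $(f_i,f_j,g)$ enters only to guarantee $g \notin \mathfrak p$, which is the conceptually simple but essential point.
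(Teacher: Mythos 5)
Your overall strategy -- localize at $\mathfrak p$, use $\codim(f_i,f_j,g)=3$ to force $g \notin \mathfrak p$, show the two Jacobian ideals have the same localization at $\mathfrak p$, and recover the primary components by contracting from $S_{\mathfrak p}$ -- is exactly the paper's, and your observation that the codimension hypothesis enters only to make $g$ a unit in $S_{\mathfrak p}$ is correct. But there is a gap at the central step, and you have misidentified where the difficulty lies. The product rule only gives the inclusion $\Jac(fg)_{\mathfrak p} \subseteq g\,\Jac(f)_{\mathfrak p} + (f)_{\mathfrak p}$; the \emph{equality} you display requires the reverse containment, which (since $g\,f_{x_i} = (fg)_{x_i} - f\,g_{x_i}$) amounts to $(f)_{\mathfrak p} \subseteq \Jac(fg)_{\mathfrak p}$ -- an inclusion you never address. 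What you instead identify as the main obstacle, namely $(f)_{\mathfrak p} \subseteq \Jac(f)_{\mathfrak p}$, is immediate in characteristic zero from Euler's relation $\deg(f)\cdot f = \sum_i x_i f_{x_i}$, so the detour through Theorem \ref{sat is CI} is unnecessary; it is also not legitimately available, since that theorem assumes the hypersurfaces through $Z$ all have the same degree and that $Z$ is a smooth complete intersection, neither of which is a hypothesis of this proposition. In any case that containment only yields $\Jac(fg)_{\mathfrak p} \subseteq \Jac(f)_{\mathfrak p}$, hence $\mathfrak q_2 \subseteq \mathfrak q_1$, not the asserted equality.

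The missing inclusion $\Jac(f)_{\mathfrak p} \subseteq \Jac(fg)_{\mathfrak p}$ is precisely where the paper does its work: using Euler's formula it writes $\nabla(fg)^T = M\cdot(\nabla f)^T$ for an explicit $(n+1)\times(n+1)$ matrix $M$ whose determinant is a unit at every point $Q \notin \mathbb{V}(g)$, so that the two Jacobian ideals agree locally off $\mathbb{V}(g)$. Your argument can be repaired in the same spirit without the matrix: Euler's relation applied to $fg$ gives $fg \in \Jac(fg)$, hence $f \in \Jac(fg)_{\mathfrak p}$ because $g$ is a unit there; then $g\,f_{x_i} = (fg)_{x_i} - f\,g_{x_i} \in \Jac(fg)_{\mathfrak p}$, and dividing by the unit $g$ gives $f_{x_i} \in \Jac(fg)_{\mathfrak p}$ for all $i$. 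Combined with the easy containment this yields $\Jac(f)_{\mathfrak p} = \Jac(fg)_{\mathfrak p}$, after which your contraction argument for $\mathfrak q_1 = \mathfrak q_2$ goes through as written.
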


\begin{proof}
Since $\Jac(fg) \subseteq \Jac(f)$ we have that $\Jac(fg)_{\mathfrak p} \subseteq \Jac(f)_{\mathfrak p}$. Consider any point $Q \notin \mathbb V(g)$,  and without loss of generality set $I_Q = (x_1,\dots,x_n)$.  We first prove the following claim.

\vspace{.2in}
\noindent \underline{Claim}: {\it $\nabla (fg)^T = M \cdot (\nabla f)^T$ for some matrix $M$ with $\det(M) \notin I_Q$, where $(-)^T$ represents the transpose of $(-)$.}

\vspace{.1in}

To see this, as before we write $F_x$ for the partial derivative of a polynomial $F$ with respect to a variable $x$. Assume $\deg (f) = d$. Then we have

\begin{itemize}

\item $df = x_0 f_{x_0} + \dots + x_n f_{x_n}$, and so
\[
f = \frac{1}{d} x_0 f_{x_0} + \dots + \frac{1}{d} x_n f_{x_n};
\]

\item $\Jac(fg) = (f_{x_0} g + f g_{x_0}, \dots, f_{x_n} g + f g_{x_n})$. 

\end{itemize}

Thus
\[
\begin{bmatrix}
(fg)_{x_0} \\
\vdots \\
(fg)_{x_n}
\end{bmatrix}
= 
\left [
\begin{array}{cccccccccc}
g + \frac{1}{d} x_0 g_{x_0} & \frac{1}{d} x_1 g_{x_0} & \dots &  \frac{1}{d} x_n g_{x_0} \\
\frac{1}{d} x_0 g_{x_1} & g +  \frac{1}{d} x_1 g_{x_1} & \dots &  \frac{1}{d} x_n g_{x_1} \\
&& \vdots \\
\frac{1}{d} x_0 g_{x_n} &  \frac{1}{d} x_1 g_{x_n} & \dots & g +  \frac{1}{d} x_n g_{x_n} 
\end{array}
\right ]
\cdot 
\left [
\begin{array}{c}
f_{x_0} \\
f_{x_1} \\
\vdots \\
f_{x_n}
\end{array}
\right ].
\]
Denote by $M$ the $(n+1) \times (n+1)$ matrix in the above equation. 
We have assumed that $g \notin I_Q = (x_1,\dots,x_n)$. Thus 
\[
\det (M) = (g + \frac{1}{d} x_0 g_{x_0}) g^{n-1} + h \ \ \ \hbox{ where } h \in I_Q.
\]
But
\[
g = \frac{1}{\deg(g)} \sum_i x_i g_{x_i}.
\]
Hence 
\[
0 \neq g \ \hbox{mod }  I_Q = \frac{1}{\deg(g)} x_0 g_{x_0} \ \hbox{mod } I_Q ,
\]
and so
\[
(g + \frac{1}{d} x_0 g_{x_0}) \ \hbox{mod } I_Q = \underbrace{\left ( \frac{1}{\deg(g)} + \frac{1}{d} \right )}_{\neq 0} \underbrace{x_0 g_{x_0}}_{\neq 0} \ \hbox{mod } I_Q \neq 0.
\]
We obtain
\[
\nabla (fg)^T = M \cdot (\nabla f)^T \ \ \ \hbox{ with } \det(M) \notin I_Q,
\]
and this concludes the proof of the claim. 

\smallskip

Therefore, for any point $Q \in \PP^n$, we have in $S_{I_Q}$ that $\Jac(f)_{I_Q} \subseteq \Jac(fg)_{I_Q}$, and so 
\begin{equation}
  \label{eq:localizations equal} 
\Jac(f)_{I_Q} = \Jac(fg)_{I_Q} \text{ if } Q \in \mathbb{P}^n \backslash \mathbb{V}(g).
\end{equation}

Let ${\mathfrak p, \mathfrak q_1, \mathfrak q_2}$ be as in the statement of this proposition. The assumption $\codim (f_i, f_j, g) = 3$ if $i \ne j$ implies that there is point in $Q$ in $\mathbb{V} (\mathfrak{p})$ that is not in $\mathbb{V}(g)$. Localizing Equality \eqref{eq:localizations equal} at $\mathfrak{p}$, we obtain $\Jac (f)_{\mathfrak{p}} = \Jac (fg)_{ \mathfrak{p}}$. As $\mathfrak q_1$ and  $\mathfrak q_2$ are minimal components  of their respective ideals, this gives the desired equality $\mathfrak q_1 = \mathfrak q_2$. 
\end{proof}

After all this preparation, we are ready to state one of the main tools of this paper.

\begin{theorem} \label{LAthm1} {\bf (Liaison Addition for Arrangements.)} 

 Let $\mathcal A_{fg} = \mathcal A_f \cup \mathcal A_g \subset \PP^n$ be a hypersurface arrangement, where $f = f_1 \cdots f_s$ and $g = g_1 \cdots g_t$, such that 

\begin{enumerate}[label=(\roman*)]


\item Any two distinct hypersurfaces in $\mathcal A_{fg}$ meet in a codimension two smooth subscheme (hence a smooth complete intersection). 

\item \label{LA1} $\hbox{codim } (f_i, f_j,g) = 3$ whenever $i \neq j$.

\item \label{LA2} $\hbox{codim } (f, g_i, g_j) = 3$ whenever $i \neq j$.

\end{enumerate}
 (Note that we do not assume that $\hbox{codim } (f_i, f_j, f_k) = 3$ or that $\hbox{codim } (g_i, g_j, g_k) = 3$ for $i, j, k$ distinct.)

Then $\mathcal A$ has the following properties:

\begin{enumerate} [label=(\alph*)]

\item $\displaystyle \Jac(fg)^{top} = \Jac(f)^{top} \cap \Jac(g)^{top} \cap \underbrace{\left [ \bigcap_{i,j} (f_i, g_j) \right ]}_{= \ (f,g)}.$

\item $\displaystyle \Jac(fg)^{top} = g \cdot \Jac(f)^{top} + f \cdot \Jac(g)^{top}$. In particular, $\Jac(fg)^{top}$ is obtained by Liaison Addition from $\Jac(f)^{top}$ and $\Jac(g)^{top}$ (see Theorem \ref{LA}).

\item Statements (a) and (b) continue to hold if we replace $\Jac(fg)^{top}$, $\Jac(f)^{top}$ and $\Jac(g)^{top}$ with their radicals. 

\end{enumerate}
\end{theorem}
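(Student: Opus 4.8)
The plan is to prove the three statements in the order (a), then (b), then (c), since (b) follows formally from (a) plus Liaison Addition (Theorem~\ref{LA}), and (c) is obtained by taking radicals of the already-established identities.

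\smallskip

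\textbf{Proof of (a).} First I would note that $\Jac(fg)^{top}$, $\Jac(f)^{top}$, $\Jac(g)^{top}$ and $(f,g)$ are all unmixed ideals of codimension two (for $(f,g)$ this uses hypothesis (i), which forces every pair of factors to be a regular sequence, together with Remark~\ref{common factor}). Hence it suffices to check the claimed equality after localizing at every codimension-two prime $\mathfrak p$ of $S$; away from such primes both sides are trivial. There are four cases for such a $\mathfrak p$: (1) $\mathfrak p$ contains no factor of $fg$ to first order, i.e. $\Jac(fg)_{\mathfrak p} = S_{\mathfrak p}$; (2) $\mathfrak p$ is the generic point of some $\mathbb V(f_i,f_j)$ but $g \notin \mathfrak p$; (3) symmetrically $\mathfrak p = \mathbb V(g_i,g_j)$ with $f \notin \mathfrak p$; (4) $\mathfrak p = \mathbb V(f_i,g_j)$. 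For case (2), Proposition~\ref{compare assoc primes} (its displayed equality \eqref{eq:localizations equal}, valid since $g \notin \mathfrak p$) gives $\Jac(fg)_{\mathfrak p} = \Jac(f)_{\mathfrak p}$, and on the right-hand side $\Jac(g)^{top}$ and $(f,g)$ localize to the unit ideal there (the generic point of $\mathbb V(f_i,f_j)$ lies on no component of $\mathbb V(\Jac(g))$ nor on $\mathbb V(f,g) = \bigcup \mathbb V(f_i,g_j)$, using hypotheses (ii) and (iii) to rule out coincidences), so the right side also equals $\Jac(f)_{\mathfrak p}$. Case (3) is symmetric using the same proposition with the roles of $f$ and $g$ reversed. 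For case (4), $\mathfrak p$ is the generic point of the smooth complete intersection $\mathbb V(f_i,g_j)$; there $\Jac(f)^{top}$ and $\Jac(g)^{top}$ are units (by hypotheses (ii) and (iii) again, this point lies on no $\mathbb V(f_a,f_b)$ or $\mathbb V(g_a,g_b)$), so the right side localizes to $(f_i,g_j)_{\mathfrak p} = (f,g)_{\mathfrak p}$; and by Proposition~\ref{two factors} applied to the pair $(f_i, g_j)$ together with the inclusion $\Jac(fg) \subseteq \Jac(f_i g_j)$ refined by the kind of Euler-formula localization argument used in Proposition~\ref{compare assoc primes}, we get $\Jac(fg)_{\mathfrak p} = (f_i,g_j)_{\mathfrak p}$ as well. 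Comparing the two sides in every case, and using that both are saturated unmixed, yields (a). The identification of $\bigcap_{i,j}(f_i,g_j)$ with $(f,g)$ is standard: both are unmixed, they agree at each generic point $\mathbb V(f_i,g_j)$, and $(f,g)$ is already saturated, so equality holds.

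\smallskip

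\textbf{Proof of (b).} By (a) the ideal $\Jac(fg)^{top}$ is the saturated ideal of a scheme which, away from lower-dimensional loci, is the union of the schemes cut out by $\Jac(f)^{top}$, by $\Jac(g)^{top}$, and by the complete intersection $(f,g)$. Now set $V_1 = \mathbb V(\Jac(f)^{top})$ and $V_2 = \mathbb V(\Jac(g)^{top})$; these are locally Cohen--Macaulay (being, by Theorem~\ref{sat is CI} and the construction, unions of complete intersections supported on smooth complete intersections) equidimensional codimension-two subschemes, and $f \in \Jac(f)^{top}$, $g \in \Jac(g)^{top}$ with $(f,g)$ a regular sequence by (i). Liaison Addition (Theorem~\ref{LA}) then says that $I := g\cdot \Jac(f)^{top} + f\cdot \Jac(g)^{top}$ is a saturated ideal whose scheme is $V_1 \cup V_2 \cup \mathbb V(f,g)$ (the three pairwise having no common components, again by (ii), (iii) and (i)), and whose Hilbert function matches. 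Since $I$ is saturated and unmixed, it equals the saturated ideal of its scheme, which by (a) is exactly $\Jac(fg)^{top}$. Hence $\Jac(fg)^{top} = g\cdot \Jac(f)^{top} + f\cdot \Jac(g)^{top}$, and by construction this exhibits it as a Liaison Addition of $\Jac(f)^{top}$ and $\Jac(g)^{top}$.

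\smallskip

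\textbf{Proof of (c).} Taking radicals in (a): the radical of an intersection is the intersection of the radicals, so $\sqrt{\Jac(fg)^{top}} = \sqrt{\Jac(f)^{top}} \cap \sqrt{\Jac(g)^{top}} \cap \sqrt{(f,g)}$, and $\sqrt{(f,g)} = \bigcap_{i,j}(f_i,g_j)$ since each $(f_i,g_j)$ is already prime (a smooth, hence irreducible, complete intersection by (i)). This is the radical analogue of (a). For the radical analogue of (b), the identity $\sqrt{\Jac(fg)^{top}} = g \cdot \sqrt{\Jac(f)^{top}} + f\cdot \sqrt{\Jac(g)^{top}}$ follows by rerunning the Liaison Addition argument of (b) with $V_1 = \mathbb V(\sqrt{\Jac(f)^{top}})$ and $V_2 = \mathbb V(\sqrt{\Jac(g)^{top}})$: these are the (reduced) supports of the previous $V_1, V_2$, still locally Cohen--Macaulay equidimensional codimension-two subschemes with $f, g$ in their ideals and $(f,g)$ regular, so Theorem~\ref{LA} applies verbatim and identifies $g\cdot \sqrt{\Jac(f)^{top}} + f\cdot \sqrt{\Jac(g)^{top}}$ with the saturated ideal of $V_1 \cup V_2 \cup \mathbb V(f,g)$, which by the radical analogue of (a) is $\sqrt{\Jac(fg)^{top}}$.

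\smallskip

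\textbf{Main obstacle.} The delicate point is case (4) in the proof of (a): one must show $\Jac(fg)_{\mathfrak p} = (f_i, g_j)_{\mathfrak p}$ at the generic point of $\mathbb V(f_i, g_j)$, i.e. that the \emph{full} product $fg$ has no unexpected singular behavior there beyond what the pair $f_i, g_j$ contributes. This requires combining Proposition~\ref{two factors} (which handles $\Jac(f_i g_j)$) with an Euler-formula matrix argument, exactly as in the Claim inside Proposition~\ref{compare assoc primes}, to absorb all the remaining factors $f_a$ ($a\ne i$), $g_b$ ($b \ne j$) into an invertible change of the Jacobian matrix at $\mathfrak p$ — which is legitimate precisely because none of those factors vanishes at the generic point of $\mathbb V(f_i,g_j)$ (here hypotheses (ii) and (iii) are essential, as they forbid a third factor from passing through a codimension-two intersection locus). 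Keeping track of which hypothesis rules out which coincidence, across all four cases, is the bookkeeping that makes the proof lengthy rather than hard.
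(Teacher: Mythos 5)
Your proof is correct and follows essentially the same route as the paper's: both arguments identify the codimension-two primary components of $\Jac(fg)$ one at a time using Propositions \ref{compare assoc primes} and \ref{two factors}, then deduce (b) from (a) via Theorem \ref{LA} (using $f \in \Jac(f)^{top}$, $g \in \Jac(g)^{top}$, and that $(f,g)$ is a regular sequence) and (c) from the fact that $(f,g)$ is already radical. The only real difference is at your case (4) (the component at $\mathbb V(f_i,g_j)$), which you correctly flag as the delicate point requiring a fresh Euler-formula computation; the paper dispatches it without any new computation by regrouping $fg = pq$ with $p = f_i g_j$ and $q = fg/p$, so that Proposition \ref{compare assoc primes} applies verbatim to conclude that the primary component of $\Jac(pq)$ at $\mathbb V(f_i,g_j)$ equals that of $\Jac(f_ig_j)$, which is $(f_i,g_j)$ by Proposition \ref{two factors} (hypotheses (ii) and (iii) guaranteeing that no other factor passes through $\mathbb V(f_i,g_j)$).
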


Before giving the proof we note that   conditions \ref{LA1} and \ref{LA2} together are equivalent to the following key condition:\\[4pt]
$(\bigstar)\ \left \{ \parbox{5in} {For each associated prime $\mathfrak p$ of $\Jac(f)^{top}$ and each associated prime $\mathfrak p'$ of $\Jac(g)^{top}$, no factor of either $f$ or $g$ is in $\mathfrak p \cap \mathfrak p'$. }
\right.$ \\[4pt]
This is the analog of  the condition in Theorem 3.2 of  \cite{MNS}.

\vspace{.2in}

\begin{proof}[Proof of Theorem \ref{LAthm1}]
By Proposition \ref{compare assoc primes} we know that each associated prime $\mathfrak p$ of $\Jac(f)^{top}$ is also an associated prime of $\Jac(fg)^{top}$, and that the corresponding $\mathfrak p$-primary components of $\Jac(f)^{top}$ and of $\Jac(fg)^{top}$ are equal. Of course the same is true for the associated primes of $\Jac(g)^{top}$. 

Thanks to Equation \eqref{radicals}, it remains only to consider the primary components of $\Jac(fg)^{top}$ supported on any of the smooth codimension two complete intersections $\mathbb V(f_i,g_j)$. 

Fix $i, j$ and consider the polynomials $p = f_i g_j$ and $q = \frac{fg}{p}$. Then
\[
\begin{array}{rcll}
\Jac(fg)^{top} & = & \Jac(pq)^{top} \\
& = & \Jac(p)^{top} \cap \hbox{(other components)} & \hbox{(by Proposition \ref{compare assoc primes})} \\
& = & \Jac(f_i g_j)^{top}  \cap \hbox{(other components)} \\
& = & (f_i, g_j)  \cap \hbox{(other components)} & \hbox{(by Proposition \ref{two factors}}). 
\end{array}
\]
This proves (a). Since $f \in \Jac(f)^{top}$, $g \in \Jac(g)^{top}$ and $(f,g)$ is a regular sequence, part (b) follows from Liaison Addition (Theorem \ref{LA}). Finally, (c) holds because our assumptions give us that $(f,g)$ is already a radical ideal. 
\end{proof}

The following is the second main tool of this paper. 

\begin{corollary} \label{BDL corollary} {\bf (Basic Double Linkage for arrangements.)}
Let $\mathcal A_f$ be a hypersurface arrangement in $\mathbb P^n$, defined  by $f = f_1 \cdots f_s$. Let $g$ be a homogeneous polynomial. Assume:

\begin{enumerate}[label=(\roman*)]

\item Each complete intersection of the form $(f_i, g)$ is smooth.

\item Each complete intersection of the form $(f_i, f_j)$ is smooth.

\item $\hbox{codim } (f_i, f_j,g) = 3$ whenever $i \neq j$.

\end{enumerate}

\noindent Then the following hold:

\begin{enumerate} [label=(\alph*)]

\item $\displaystyle \Jac(fg)^{top} = \Jac(f)^{top} \cap  \underbrace{\left [ \bigcap_{i,j} (f_i, g) \right ]}_{= \ (f,g)}.$

\item $\displaystyle \Jac(fg)^{top} = g \cdot \Jac(f)^{top} + (f)$. In particular, $\Jac(fg)^{top}$ is obtained from $\Jac(f)^{top}$ by Basic Double Linkage (see Theorem \ref{BDL}). 

\item Statements (a) and (b) continue to hold if we replace $\Jac(fg)^{top}$ and $\Jac(f)^{top}$ by their radicals. 

\end{enumerate}

\end{corollary}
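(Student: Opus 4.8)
The plan is to obtain the corollary as the special case $t=1$ of \Cref{LAthm1}, taking the second arrangement to be the single hypersurface $g$ (so $g_1=g$). First I would match the hypotheses. Condition (i) of \Cref{LAthm1} -- that any two distinct hypersurfaces of $\mathcal A_{fg}$ meet in a smooth codimension-two complete intersection -- is exactly the conjunction of hypotheses (i) and (ii) here, since when $t=1$ the only pairs are the $(f_i,g)$ and the $(f_i,f_j)$. The condition $\codim(f_i,f_j,g)=3$ in \Cref{LAthm1} is hypothesis (iii) here, and the condition $\codim(f,g_i,g_j)=3$ there is vacuous because $g$ has a single factor. Note also that $g$ is distinct from every $f_i$, as otherwise $f_i$ and $g$ would not form a regular sequence.

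Next I would record how the terms involving $g$ in \Cref{LAthm1} degenerate. Applying \Cref{consequence of smooth} to the smooth complete intersection $\mathbb{V}(f_1,g)$ shows that the singular locus of $g$ is zero-dimensional (or empty); hence the scheme defined by $\Jac(g)$ has dimension $\le 0$, so $\Jac(g)$ has no height-two primary component and $\Jac(g)^{top}=S$. With $\Jac(g)^{top}=S$, part (a) of \Cref{LAthm1} becomes $\Jac(fg)^{top}=\Jac(f)^{top}\cap\bigcap_i(f_i,g)$, which is statement (a); part (b) becomes $\Jac(fg)^{top}=g\cdot\Jac(f)^{top}+f\cdot S=g\cdot\Jac(f)^{top}+(f)$, which is statement (b); and since Liaison Addition (\Cref{LA}) with an empty second scheme and $I_{V_2}=S$ is precisely the Basic Double Linkage construction of \Cref{BDL}, we conclude that $\Jac(fg)^{top}$ is a basic double link of $\Jac(f)^{top}$. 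Statement (c) follows the same way from part (c) of \Cref{LAthm1} (with $\sqrt{\Jac(g)^{top}}=S$), once we know that $(f,g)$ is a radical ideal.

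The remaining elementary points are the identification $\bigcap_i(f_i,g)=(f,g)$ appearing in (a) and the radicality of $(f,g)$ needed for (c), which I would verify directly. By the codimension hypotheses, $\mathbb{V}(f)\cap\mathbb{V}(g)=\bigcup_i\mathbb{V}(f_i,g)$ has codimension two, so $f,g$ form a regular sequence and $(f,g)$ is Cohen-Macaulay, hence unmixed, with $\sqrt{(f,g)}=\bigcap_i\sqrt{(f_i,g)}=\bigcap_i(f_i,g)$. At a general point of a component $\mathbb{V}(f_i,g)$, hypothesis (iii) places us off every other $\mathbb{V}(f_k,g)$, so $(f,g)$ is locally $(f_i,g)$ there and thus radical; therefore $(f,g)$ is generically reduced, and being Cohen-Macaulay it is reduced, giving $(f,g)=\sqrt{(f,g)}=\bigcap_i(f_i,g)$. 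I do not expect a genuine obstacle: the only point requiring care is the correct degenerate reading of the Liaison Addition conclusion when one of the linked schemes is empty, together with this short reducedness argument; everything else is an immediate specialization of \Cref{LAthm1}.
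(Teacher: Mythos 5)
Your proposal is correct and follows exactly the paper's route: the paper proves this corollary in one line as the case $t=1$ of Theorem \ref{LAthm1}, and your argument is that specialization with the degenerate terms ($\Jac(g)^{top}=S$, the vacuous third codimension condition, the identification $(f,g)=\bigcap_i(f_i,g)$) worked out explicitly and correctly.
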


\begin{proof}
This is just the case $t=1$ of Theorem \ref{LAthm1}.
\end{proof}


\section{The ACM property for many hypersurface arrangements} \label{sec: main theorem}

The following statement was one of the main results of \cite{MNS}. 

\begin{theorem}[{\cite[Theorem, page 142]{MNS}}] \label{MNS first main thm}
Let $\mathcal A$ be a hyperplane arrangement in $\mathbb P^n$. 
Let $J$ be the Jacobian ideal associated to $\mathcal A$.
Assume that no hyperplane of $\mathcal A$ is in the associated prime of more than one non-reduced  component of $J^{top}$. Then  both  $S/{J^{top}}$  and $ S/\sqrt{J}$ are Cohen-Macaulay. 
\end{theorem}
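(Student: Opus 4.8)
The plan is to realize the scheme $\mathbb{V}(J^{top})$, and separately $\mathbb{V}(\sqrt{J})$, by an iterated application of the two liaison tools of \Cref{BDL section} --- Liaison Addition for Arrangements (\Cref{LAthm1}) and Basic Double Linkage for arrangements (\Cref{BDL corollary}) --- beginning from complete intersections, so that the Cohen--Macaulay property propagates through \Cref{LA} and \Cref{BDL}.

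First I would reformulate the hypothesis. By \Cref{jac = union}, \Cref{reduced case} and \Cref{plane ci}, for $\mathcal{A}=\bigcup_{i=1}^s L_i$ with $L=\ell_1\cdots\ell_s$ the ideal $J^{top}$ is the intersection, over all codimension two linear spaces $\Lambda=L_i\cap L_j$ occurring in $\mathcal{A}$, of the ideals $\Jac\!\left(\prod_{\ell_i\in I_\Lambda}\ell_i\right)$; each of these is a complete intersection of type $(e_\Lambda-1,e_\Lambda-1)$, where $e_\Lambda=\#\{\,i:\ell_i\in I_\Lambda\,\}\ge 2$, its associated prime is $I_\Lambda$, and it is non-reduced precisely when $e_\Lambda\ge 3$ --- call such a $\Lambda$ \emph{heavy}. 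Since a hyperplane $L_i$ lies in the associated prime $I_\Lambda$ of the component over a heavy $\Lambda$ precisely when $\Lambda\subseteq L_i$, i.e.\ when $L_i\in S_\Lambda:=\{\,L_j:\Lambda\subseteq L_j\,\}$, the hypothesis says exactly that the sets $S_{\Lambda_1},\dots,S_{\Lambda_r}$ of the distinct heavy spaces $\Lambda_1,\dots,\Lambda_r$ are pairwise disjoint. Letting $T$ be the set of hyperplanes lying in no heavy space, we then have $\mathcal{A}=S_{\Lambda_1}\sqcup\cdots\sqcup S_{\Lambda_r}\sqcup T$. The only fact I would use about all this is the tautology that three hyperplanes through a common codimension two space $\Lambda'$ force $\Lambda'$ to be heavy with all three contained in $S_{\Lambda'}$; with the disjointness, this kills every unwanted incidence in what follows.

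Next I would build up $\mathbb{V}(J^{top})$. Order the hyperplanes so that the clusters $S_{\Lambda_1},\dots,S_{\Lambda_r}$ come first and those of $T$ last (if $r=0$, start with any two hyperplanes). Set $f_1=\prod_{L_i\in S_{\Lambda_1}}\ell_i$; by \Cref{plane ci} the ideal $\Jac(f_1)^{top}=\Jac(f_1)$ is a complete intersection, so $S/\Jac(f_1)^{top}$ is ACM. Assuming inductively that $f$ is the product over $S_{\Lambda_1}\sqcup\cdots\sqcup S_{\Lambda_{j-1}}$ with $S/\Jac(f)^{top}$ ACM, I would put $g=\prod_{L_i\in S_{\Lambda_j}}\ell_i$ and invoke \Cref{LAthm1} for $\mathcal{A}_f\cup\mathcal{A}_g$: hypothesis (i) is automatic for hyperplanes, and conditions (ii) and (iii) amount to the statement that no hyperplane of $S_{\Lambda_j}$ contains a codimension two intersection of two hyperplanes of $\mathcal{A}_f$ and conversely, which holds by the previous paragraph (a violation would place some hyperplane in two of the disjoint sets $S_\Lambda$). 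Then $\Jac(fg)^{top}$ is obtained by Liaison Addition from $\Jac(f)^{top}$ and the complete intersection $\Jac(g)^{top}$, so $S/\Jac(fg)^{top}$ is ACM by \Cref{LA}(iv). Once all $r$ clusters are in, I would adjoin the hyperplanes of $T$ one by one: for $g=\ell$ the new linear form, \Cref{BDL corollary} applies --- its condition (iii) says the new hyperplane contains no codimension two intersection $L_i\cap L_j$ of two hyperplanes already present, which holds because $\ell$ lies in no heavy space --- so $\Jac(f\ell)^{top}$ is a basic double link of $\Jac(f)^{top}$, and ACMness is preserved by \Cref{BDL}(iii). At the end $\mathcal{A}$ is exhausted, and $S/J^{top}$ is Cohen--Macaulay. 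For $\sqrt{J}$ I would run the very same induction on radicals using \Cref{LAthm1}(c) and \Cref{BDL corollary}(c); now the starting ideal is $\sqrt{\Jac(f_1)}=I_{\Lambda_1}$, the ideal of a linear space and so a complete intersection, every non-heavy component is already reduced, and the same propagation gives that $S/\sqrt{J}$ is Cohen--Macaulay.

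The hard part, I expect, is purely the bookkeeping of the two steps above: one has to be certain that the single stated hypothesis really forces conditions (ii) and (iii) of \Cref{LAthm1} (equivalently condition $(\bigstar)$) and condition (iii) of \Cref{BDL corollary} at \emph{every} stage of the build-up --- that is, that assembling $\mathcal{A}$ cluster-by-cluster and then hyperplane-by-hyperplane never creates an incidence not already there. Once that is secured, nothing more is needed: the building blocks are complete intersections, hence ACM, and both liaison operations carry ACMness forward at each step.
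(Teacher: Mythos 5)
Your proposal is correct and follows essentially the same route as the paper, which obtains this statement as the hyperplane special case of \Cref{cond star implies ACM}: exhaust the non-reduced components (your ``heavy'' clusters, each a complete intersection by \Cref{plane ci}) via Liaison Addition for Arrangements, then adjoin the remaining hyperplanes one at a time by Basic Double Linkage, and run the same induction on radicals. Your explicit bookkeeping showing that the disjointness of the clusters $S_{\Lambda_j}$ forces conditions (ii) and (iii) of \Cref{LAthm1} at every stage is exactly the verification the paper leaves implicit, and it is sound.
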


Our goal now is to see to what extent we can extend this result to hypersurface arrangements. We have seen in \Cref{sat is CI}  that \Cref{plane ci} extends but not in an obvious way. 
Indeed, when we pass from hyperplanes to hypersurfaces, a lot of new complications arise. Nevertheless, using  the results of Section \ref{BDL section},  we are able to generalize Theorem  \ref{MNS first main thm}. This was one of the primary goals of this paper.  The result establishes \Cref{thm:intro-generalization}, stated in the introduction.

\begin{theorem} \label{cond star implies ACM}
Let $\mathcal A$ be a hypersurface arrangement in $\mathbb P^n$, $n \geq 3$, defined by a form $f = f_1 \cdots f_s$.  Let $J = Jac(f)$ be the Jacobian ideal associated to $\mathcal A$. Assume:

\begin{enumerate} [label=(\roman*)]

\item \label{cond 5.2(i)} Any two factors of $f$ meet in a smooth codimension 2 complete intersection.

\item 
No hypersurface of $\mathcal A$ is in the associated prime of more than one non-reduced component of $J^{top}$.

\end{enumerate}

\noindent Then 

\begin{itemize}
\item[(a)]  whenever more than two factors of $f$ meet in a codimension 2 subscheme $Z$, then all such factors have the same degree;

\item[(b)] the schemes defined by  both $J^{top}$ and $\sqrt{J^{top}}$ are ACM;

\item[(c)] If each factor is smooth then also the scheme defined by $\sqrt{J}$ is ACM.

\item[(d)] both ideals $J^{top}$ and $\sqrt{J^{top}}$ define equidimensional schemes that are the union of complete intersections (hence are generically complete intersections). If each $f_i$ is smooth then the same is true also for $\sqrt{J}$.

\end{itemize}

\end{theorem}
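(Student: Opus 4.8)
### Proof Plan for Theorem \ref{cond star implies ACM}

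\textbf{Overall strategy.} The plan is to build the scheme $X^{top}$ defined by $J^{top}$ one factor at a time, using the machinery of Section~\ref{BDL section}, and to track the Cohen--Macaulay property along the way. Order the factors $f_1,\dots,f_s$ and set $F_k = f_1\cdots f_k$, so $F_s = f$. The idea is to show inductively that $\Jac(F_k)^{top}$ is ACM, with the inductive step furnished by Liaison Addition for Arrangements (Theorem~\ref{LAthm1}) or Basic Double Linkage for Arrangements (Corollary~\ref{BDL corollary}), both of which preserve ACMness by Theorem~\ref{LA}(iv) and Proposition~\ref{BDL}(iii). The base cases are the single-factor arrangements, where $\Jac(f_i)$ is $\mathfrak m$-primary (so $\Jac(f_i)^{top} = S$, trivially ACM), and the two-factor arrangements handled by Proposition~\ref{two factors}.

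\textbf{Part (a): the hidden same-degree constraint.} First I would prove (a), since it is forced by the hypotheses and is needed to legitimately invoke Section~\ref{one CI}. Suppose more than two factors, say $f_{i_1},\dots,f_{i_m}$ ($m\ge 3$), all vanish on the same codimension-two subscheme $Z$. By assumption~\ref{cond 5.2(i)}, any two of them meet in a smooth codimension-two complete intersection, and two distinct hypersurfaces containing $Z$ must cut out exactly $Z$ (the base locus of a pencil is determined by any two members, cf.\ Remark~\ref{common factor}); hence $Z$ is a smooth complete intersection and $I_Z = (f_{i_1}, f_{i_2})$ up to the degrees matching. The point is that if $\deg f_{i_1} = \deg f_{i_2} = d$ but $\deg f_{i_3} = e \ne d$, then $f_{i_3} \in [I_Z]_e$ while $I_Z$ is generated in degree $d$; comparing, $f_{i_3}$ would be a multiple of one of $f_{i_1}, f_{i_2}$, contradicting distinctness (if $e > d$) or forcing $e \ge d$ in both directions, so $e = d$. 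I would write this out carefully, as it is the mechanism that makes Theorem~\ref{sat is CI} applicable to each maximal ``pencil cluster'' of factors.

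\textbf{Parts (b), (c), (d): the inductive construction.} With (a) in hand, for each codimension-two component $Z$ of $X^{top}$ the factors of $f$ vanishing on $Z$ form a set of smooth hypersurfaces of a common degree $d$ inside $[I_Z]_d$; by Theorem~\ref{sat is CI} the corresponding partial product has Jacobian saturating to a complete intersection of type $((e_Z-1)d,(e_Z-1)d)$ supported on $Z$, where $e_Z$ is the number of such factors. This identifies the $Z$-primary component of $J^{top}$ with an explicit ACM complete intersection and, combined with Proposition~\ref{compare assoc primes}, shows that adding further factors does not disturb it. Now order the factors so that the factors in each pencil cluster are contiguous, and introduce them one at a time. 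At each step the new factor $f_k$ either (i) begins or extends a pencil cluster — here I would check the hypotheses of Corollary~\ref{BDL corollary} and conclude $\Jac(F_k)^{top} = f_k\cdot\Jac(F_{k-1})^{top} + (F_{k-1})$, a basic double link that preserves ACMness — or (ii) meets previously-used factors only in pairwise-distinct codimension-two complete intersections, in which case hypothesis~\ref{cond 5.2(i)} and condition~$(\bigstar)$ (which is exactly the translation of hypothesis~(ii) of the theorem, as noted after Theorem~\ref{LAthm1}) let me apply Theorem~\ref{LAthm1} to write $\Jac(F_k)^{top} = f_k\cdot\Jac(F_{k-1})^{top} + \Jac(f_k)^{top}\cdot F_{k-1}$, again ACM-preserving. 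Either way $S/\Jac(F_k)^{top}$ is Cohen--Macaulay, and after $s$ steps we get (b). Statement~(c) follows the same induction using the ``radical'' clauses (c) of Theorem~\ref{LAthm1} and Corollary~\ref{BDL corollary}, noting that when all $f_i$ are smooth the relevant complete intersections $(f_i,f_j)$ and $(f_i,g)$ are reduced, so that $\sqrt{J}$ agrees with $\sqrt{J^{top}}$ (there are no surviving lower-dimensional singularities, by Lemma~\ref{consequence of smooth}). Statement~(d) is read off from the construction: at every stage the scheme is a scheme-theoretic union of complete intersections, each supported on some $\mathbb V(f_i,f_j)$, so it is equidimensional and generically a complete intersection.

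\textbf{Main obstacle.} The delicate point is verifying, at each inductive step, that the hypotheses of Theorem~\ref{LAthm1} or Corollary~\ref{BDL corollary} genuinely hold — in particular the codimension-three conditions \ref{LA1}, \ref{LA2} and the translation into $(\bigstar)$. Condition~(ii) of the present theorem controls non-reduced components, but I must also rule out a factor $f_k$ lying in an associated prime of $\Jac(F_{k-1})^{top}$ in a way that violates the codimension-three requirement; this is precisely where hypothesis~(ii) (no hypersurface in the associated prime of more than one non-reduced component) does its work, and where part~(a) is silently used to guarantee that when three or more factors do share a codimension-two locus they are absorbed into a single pencil cluster rather than creating a genuinely new bad intersection. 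Making this bookkeeping airtight — choosing the order of the factors so that clusters are built before cross-cluster additions, and confirming the hypotheses transfer cleanly — is the real content of the proof; the liaison-theoretic conclusions are then automatic from the cited results.
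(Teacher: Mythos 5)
Your overall strategy --- identify the ``pencil clusters'' of factors sharing a codimension-two locus, control them via \Cref{sat is CI}, and assemble everything with the liaison tools of Section~\ref{BDL section} --- is indeed the paper's strategy, but there is a genuine error in your inductive step. You propose to add the factors one at a time and, when $f_k$ ``begins or extends a pencil cluster,'' to apply \Cref{BDL corollary} and conclude $\Jac(F_k)^{top}=f_k\cdot\Jac(F_{k-1})^{top}+(F_{k-1})$. This step fails: if $f_k\in I_Z$ for $Z=\mathbb V(f_i,f_j)$ with $f_i,f_j$ already present, then $\codim(f_i,f_j,f_k)=2$, so hypothesis (iii) of \Cref{BDL corollary} is violated, and the claimed formula is simply false. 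Concretely, for three quadrics $G_1,G_2,G_3$ through a smooth $(2,2)$ complete intersection in $\PP^3$, the basic double link $G_3\cdot(G_1,G_2)+(G_1G_2)$ defines a scheme of degree $12$, whereas \Cref{sat is CI} shows that $\Jac(G_1G_2G_3)^{top}$ is a complete intersection of type $(4,4)$, of degree $16$; the two ideals are not equal. A pencil cluster cannot be built up factor by factor: it must be treated as a single indivisible block, whose top-dimensional Jacobian is the complete intersection $(H_1,H_2)$ supplied by \Cref{sat is CI}. The correct assembly (and the paper's) is: take the whole subarrangement supporting each non-reduced component as one unit, combine these units pairwise by Liaison Addition (\Cref{LAthm1}) --- the cross-conditions $\codim(f_i,f_j,g)=3$ being exactly what hypothesis (ii) of the theorem guarantees --- and only afterwards add the remaining factors one at a time by \Cref{BDL corollary}.

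The argument you sketch for (a) also does not work as written. You assume two of the three factors already share a degree $d$, which is not given, and you infer from $f_{i_3}\in[(f_{i_1},f_{i_2})]_e$ with $e>d$ that $f_{i_3}$ is a scalar multiple of one of $f_{i_1},f_{i_2}$; that inference is false ($xf_{i_1}+yf_{i_2}$ is a counterexample). The paper instead argues by a degree count using hypothesis \ref{cond 5.2(i)} directly: if $d_1<d_3$, the smooth (hence irreducible) complete intersection $\mathbb V(g_2,g_3)$, of degree $d_2d_3$, would have to contain the codimension-two scheme $\mathbb V(g_1,g_2)$ of strictly smaller degree $d_1d_2$, which is impossible. (Equivalently, $(g_1,g_2)$ and $(g_2,g_3)$ are prime ideals with the same zero locus, hence equal, forcing $d_1d_2=d_2d_3$.) This part is repairable along those lines, but the cluster-extension step above is the real obstruction to your proof going through.
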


\begin{proof}

We first prove (a). Let $g_1,g_2,g_3$ be irreducible factors of $f$ meeting in a codimension 2 subscheme, and without loss of generality assume $\deg(g_1) \leq \deg(g_2) \leq \deg(g_3)$. For convenience set $d_i = \deg(g_i)$ for $i = 1,2,3$. 

If they do not all have the same degree then $d_1 < d_3$. Let $X_1$ be the complete intersection of $g_1$ and $g_2$, which by (ii) is smooth of degree $d_1d_2$. By assumption, $g_3$ also vanishes on $X_1$. Now $g_2$ and $g_3$ also define a smooth (hence irreducible, by ACM) complete intersection, $X_2$, of degree $d_2 d_3 > d_1 d_2 = \deg(X_1)$. This means that $X_2$ provides a non-trivial link of $X_1$ to another codimension 2 scheme, which is impossible since $X_2$ is smooth by (ii). (It is possible that the residual scheme is also supported on $X_1$, but this does not change the fact that  $X_2$ cannot be smooth.)  This proves (a).

Notice that this implies that if more than two irreducible factors of $f$ vanish along a codimension 2 subvariety then that subvariety is the complete intersection of any two of them. 

With the tools we have now developed, the argument follows exactly as it did in Theorem 3.2 and Corollary 3.5 of \cite{MNS}.   For the convenience of the reader we provide the argument with the needed adjustments.   

Any component of $J^{top}$ is smooth (hence reduced) if and only if there are only two factors that vanish on it.  We will first exhaust the non-reduced components, and then  deal with the remaining reduced components.
By Theorem \ref{sat is CI} and by our hypotheses, any of these non-reduced components is a complete intersection. 

Start with a non-reduced component $X_1$ (if it exists). If it is the only non-reduced component then we will go to the next step, and so assume that there is a second non-reduced component, $X_2$. Let $\mathcal A_1$ and $\mathcal A_2$ be the subarrangements defining $X_1$ and $X_2$ respectively (i.e. $\mathcal A_1$ is the union of the hypersurfaces in the radical of $I_{X_1}$ and similarly for $X_2$). Note that both $X_1$ and $X_2$ are ACM, being complete intersections. By (a), the  hypersurfaces in $\mathcal A_1$ all have the same degree $d_1$, and similarly the hypersurfaces in $\mathcal A_2$ all have the same degree $d_2$ (but $d_1$ is not necessarily equal to $d_2$). By assumption  (ii), we are in the  situation  of Theorem \ref{LAthm1}. Letting $J'$ be the Jacobian ideal of $\mathcal A_1 \cup \mathcal A_2$, we thus get that the scheme defined by  $(J')^{top}$  is obtained by Liaison Addition from $X_1$ and $X_2$, and so  $(J')^{top}$ is ACM.
We continue in this way until the  non-reduced components are exhausted. (Along the way  we have also picked up reduced components, but this does not matter.)

For the second step, if any factors of $f$ remain, we add them one at a time, building up our scheme now by Basic Double Linkage (Corollary  \ref{BDL corollary}), and adding only  reduced components. We conclude that  for the Jacobian  ideal $J$ of $\mathcal A$ we get $S/J^{top}$ is Cohen-Macaulay.

For the radical  $\sqrt{J^{top}}$  we note that all of the applications of Liaison Addition and Basic Double Linkage continue to be valid for the radicals of the top dimensional components of the Jacobian ideals, since if $F \in I$ for some homogeneous ideal $I$ then also $F \in \sqrt{I}$. Furthermore, the radical of any of the $X_i$ is also a complete intersection, hence ACM. Thus the same steps as we did for $S/J^{top}$ work for the radical  $S/\sqrt{J^{top}}$. Finally, suppose that each factor is smooth. Then the singular locus of $f$ is supported on the intersections of pairs of factors, so in fact $\sqrt{J} = \sqrt{J^{top}}$ and thus we are done. (Note that $J$ could have embedded components in this support, but they are wiped away by the radical.)  
\end{proof}

\begin{remark}
    Notice two things.  First, by Lemma \ref{consequence of smooth}, Condition \ref{cond 5.2(i)} in Theorem \ref{cond star implies ACM} implies that each factor has at worst 0-dimensional singularities.

    Second, this same Condition \ref{cond 5.2(i)} is automatically true for hyperplane arrangements. Thus Theorem \ref{cond star implies ACM} immediately gives Theorem \ref{MNS first main thm}.
\end{remark}

\begin{example} \label{four cubics}
In Theorem \ref{cond star implies ACM} we assumed $n \geq 3$, so here we look briefly at the case $n=2$. First, the trick used to prove (a) by invoking the smoothness of the complete intersections will not work here because any union of reduced points is smooth, and there is no chance to use irreducibility.

Second, the conclusion in (b) is automatic for $\mathbb P^2$.

Third, recall the elementary result mentioned just after Remark \ref{common factor} above, from \cite[Remark 3.1]{MNS}: if $e$ hyperplanes all contain the same codimension two linear variety $\Lambda$, and if $F$ is the product of the corresponding linear forms, then the Jacobian ideal of $F$ is a saturated complete intersection of type $(e-1,e-1)$. For example, if $F$ is the product of four linear forms then the corresponding complete intersection has degree 9.  Note that we have a generalization of this result to hypersurfaces, namely Theorem \ref{sat is CI}, that applies even in $\mathbb P^2$.

However, one could also hope that the same would be true locally for the non-linear case, at least in $\mathbb P^2$. For example, if $P$ is a point and if $f_1,f_2,f_3,f_4$ are sufficiently general cubics in $I_P$ then locally at $P$ one might hope that the curves associated to these forms could be represented by their tangent lines at $P$, so the scheme defined by the Jacobian is locally a complete intersection of degree 9 at $P$. 

Unfortunately this is not true. We have checked the following on \cocoa. Assume that the four general forms have degree 3, and let $X$ be the scheme defined by the Jacobian ideal of the product of these cubics.  Let $X_1$ be the component of $X$ supported at $P$. Then $X_1$ does indeed have degree 9, but it is not a complete intersection. Indeed, it has four minimal generators, of degrees 3,4,4,4.  So the idea of looking locally and using the tangent lines to determine $X_1$ does not work. Note that this example does not fall into the setting of Theorem \ref{sat is CI}. 

\end{example}

As a consequence  of Theorem \ref{cond star implies ACM}, we obtain a proper generalization of \Cref{reduced case}  from hyperplane arrangements to hypersurface arrangements. 

\begin{corollary} \label{hypersurf config}
Let $\mathcal A$ be a hypersurface arrangement in $\mathbb P^n$ defined by a form $f = f_1 \cdots f_s$. Assume that 
any two  of the hypersurfaces  meet in a smooth complete intersection, and that any three factors meet in codimension 3. Then   the scheme defined by  $\Jac(f)^{top}$ is the reduced union of the complete intersections defined by $(f_i,f_j)$. In particular, it is the ideal of a hypersurface configuration (in the language of \cite{GHMN}), so it is ACM and its minimal generators are the forms $\frac{f}{f_i}$.
\end{corollary}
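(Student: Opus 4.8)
The plan is to deduce \Cref{hypersurf config} from \Cref{cond star implies ACM} by verifying that the hypotheses of that theorem are satisfied and then extracting the sharper structural conclusions under the stronger assumption that any three of the $f_i$ meet in codimension $3$. First I would observe that hypothesis \ref{cond 5.2(i)} of \Cref{cond star implies ACM} is literally one of our assumptions, so it only remains to check hypothesis (ii): no $f_i$ lies in the associated prime of more than one non-reduced component of $J^{top}$. But under the assumption that any three factors meet in codimension $3$, there are \emph{no} non-reduced components at all: by \Cref{two factors} (or the analysis in the proof of \Cref{LAthm1}), every height-two associated prime of $\Jac(f)^{top}$ is of the form $(f_i,f_j)$ for some pair $i\neq j$, and such a component is non-reduced only if a third factor $f_k$ also vanishes on $\mathbb V(f_i,f_j)$, i.e. only if $\codim(f_i,f_j,f_k)=2$, which is excluded. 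So hypothesis (ii) holds vacuously, and \Cref{cond star implies ACM} applies: the scheme defined by $\Jac(f)^{top}$ is ACM, equidimensional, and a union of complete intersections each supported on some $(f_i,f_j)$.

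Next I would pin down exactly which union it is. Since no third factor passes through $\mathbb V(f_i,f_j)$, the only factors vanishing on that locus are $f_i$ and $f_j$ themselves, so by \Cref{two factors} the $(f_i,f_j)$-primary component of $\Jac(f)^{top}$ is precisely the radical ideal $(f_i,f_j)$. Running through the associated primes, we get
\[
\Jac(f)^{top} = \bigcap_{i<j} (f_i,f_j),
\]
which is visibly radical, hence equal to the reduced union of the complete intersections $\mathbb V(f_i,f_j)$. This is exactly the defining ideal of a hypersurface configuration in the sense of \cite{GHMN}, so the ACM property and the identification of the minimal generators as the forms $f/f_i$ follow from the cited results there (this is also the content of \Cref{reduced case} in the hyperplane case, now generalized). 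Alternatively, the generator statement follows by the same basic-double-linkage bookkeeping used in \Cref{lem:star config is power}: building up $\bigcap_{i<j}(f_i,f_j)$ one surface at a time via \Cref{BDL corollary}, at each stage the new generator is $f_1\cdots f_{k}$ restricted appropriately, and one checks that the resulting generating set is $\{f/f_i\}$.

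I expect the only genuine point requiring care — rather than a real obstacle — is confirming that the codimension-$3$ hypothesis really does preclude \emph{all} non-reduced top-dimensional components and that \Cref{two factors} identifies the $(f_i,f_j)$-primary component on the nose (including that it is already saturated and radical there, not merely up to saturation). Everything else is a direct invocation of \Cref{cond star implies ACM}, \Cref{two factors}, and the hypersurface-configuration machinery of \cite{GHMN}; no new estimates or constructions are needed.
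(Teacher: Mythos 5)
Your proposal is correct and follows essentially the same route as the paper: both arguments observe that the smoothness of each pairwise intersection forces any non-reduced component of $J^{top}$ to involve at least three factors, which the codimension-$3$ hypothesis rules out, so condition (ii) of Theorem \ref{cond star implies ACM} holds vacuously; the ACM and union-of-complete-intersections statements then come from that theorem (together with Corollary \ref{BDL corollary} / Proposition \ref{two factors}), and the identification as a hypersurface configuration with generators $f/f_i$ is cited from \cite{GHMN}. Your extra step explicitly writing $\Jac(f)^{top}=\bigcap_{i<j}(f_i,f_j)$ via Proposition \ref{two factors} is a slightly more detailed version of what the paper leaves implicit, not a different argument.
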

 
\begin{proof}
 The assumption that any two of the hypersurfaces meet in a smooth complete intersection means that non-reduced components of $J^{top}$ (which all have codimension two) must involve at least three of the hypersurfaces. The assumption here that  any three of the hypersurfaces meet in codimension three excludes this. Thus the components of $J^{top}$ are all reduced.  In particular, condition (ii) of Theorem \ref{cond star implies ACM} is vacuous.   The first assertion is a result of Theorem \ref{cond star implies ACM} (see also Corollary \ref{BDL corollary}). The second assertion follows from \cite[Proposition 2.3]{GHMN}.
\end{proof}

It is natural to ask whether Theorem \ref{cond star implies ACM} also holds if we replace $\Jac(f)^{top}$ by $\Jac(f)^{sat}$. The answer is no: the saturation may have embedded points. This is true even for hyperplane arrangements, as the following example shows.

\begin{example}
Let  $P$ be the point in $\mathbb P^3$ defined by the ideal $(x,y,z)$. Let $L_1,\dots,L_4$ be four general linear forms in $I_P$ and let $F$  be their product. Let $J$ be the Jacobian ideal of $F$. 

Of course the scheme defined by $J$  has degree 6, and its top dimensional part is supported on the six lines defined by pairs of the linear forms. But the saturation has Hilbert function  $6t-1$  (and in fact $J$ is already  saturated) while the top dimensional part has Hilbert function  $6t-2$ (and is clearly  ACM, being a cone over a set of 6 points in $\mathbb P^2$). So there is an embedded component supported at $P$.
\end{example}

\section{Illustrative examples related to Theorems \ref{sat is CI}  and \ref{cond star implies ACM} }  \label{sect: examples}


In this section we illustrate how the conclusions in our main statements may fail if one relaxes some of the assumptions. 

One of the main results of this paper is Theorem \ref{sat is CI}, which is instrumental in many of the subsequent results. Moreover, notice that our main result, Theorem \ref{cond star implies ACM}, formally gives the conclusion  of Theorem \ref{sat is CI} about the  Cohen-Macaulayness of $\Jac(f)^{top}$ as a special case. Thus any generalization of Theorem \ref{cond star implies ACM} appears to require a generalization of Theorem \ref{sat is CI}. However, we will see in this  section that the conclusion of Theorem \ref{sat is CI}  (and thus of \Cref{cond star implies ACM})  may fail in surprising ways if one relaxes some of its assumptions. See also Example \ref{four cubics}. 

The essential hypotheses of Theorem \ref{sat is CI} are that  $Z$ is a smooth complete intersection and that the minimal generators of $I_Z$ have the same degree. 
More precisely, the statement of Theorem \ref{sat is CI} is that taking $G_1,\dots,G_s \in [I_Z]_d$  all smooth, and setting $G = \prod_{i=1}^s G_i$, then the saturation of the Jacobian ideal of $G$ is a complete intersection (and we describe how far it is from already  being saturated). In particular, this saturation is unmixed and ACM.
In this section we give some examples to show that the result is not true without these assumptions.

All of the examples in this  section were computed using the computer algebra package \cocoa\ \cite{cocoa}, and if no theoretic argument is given, the examples are just reports of the results of the calculations.

\begin{example} \label{counterexa}
    This example shows that if we allow the degrees of the hypersurfaces defining the complete intersection to be different,  the result of Theorem \ref{sat is CI} need not hold. 
    
    Let $F$ be a general form of degree 2, $G$ a general form of degree 3, and $H_1,H_2,H_3 \in (F,G)$ general elements of degree 4. Note that $(F,G)$ defines a smooth complete intersection, $X$, of degree 6, and all five hypersurfaces are smooth. However,  just the degrees alone force the other pairs of hypersurfaces to meet in  singular complete intersections. (For example, $(F,H_1)$ links $X$ to a residual curve $Y$ of degree 2 so $(F,H_1)$ defines $X \cup Y$, which is singular.) Let $J$ be the Jacobian ideal of the polynomial $FGH_1H_2H_3$.

The following conclusions for the Jacobian ideal $J$ and the top dimensional part $J^{top}$ hold.

\begin{itemize}
    \item $J$ is not saturated.

    \item The saturation of $J$ is not unmixed: the corresponding curve has embedded points. Indeed, the Hilbert polynomial of $R/J$ is $138t -759$ while the Hilbert polynomial of $R/J^{top}$ is $138 t - 1217$.

    \item $J^{top}$ defines a non-ACM curve of degree 138.

    \item The component supported on $X$ of the scheme defined by $J^{top}$ has degree 84 and is  not ACM.
\end{itemize}

\noindent In sum, almost nothing from Theorem \ref{sat is CI} can be expected to remain true without the assumption on the  degrees.

\end{example}

\begin{example}

The  purpose of this  example is to show that even the  degree of the scheme defined by  $J^{top}$ can fail to be what one might expect.

Let $\ell$ be a line in $\mathbb P^3$. Let $F \in [I_\ell]_3$ be a cubic surface that is smooth at all points of $\ell$. Let $D$ be the non-ACM double line with saturated ideal $I_D = I_\ell^2 + (F)$ (see for instance \cite{dble}). Let $G_1 \in [I_D]_4$ and $G_2 \in [I_D]_5$ be general choices; in particular, they form a regular sequence and they are tangent along $\ell$ (since $D$ contains information about the tangent direction).  It turns out that $G_1$ has one singular point and $G_2$ has two (distinct) singular points.

Notice that the complete intersection $(G_1,G_2)$ links $D$ to a residual (non-ACM, by liaison) curve $E$ of degree 18; $E$ turns out to be smooth. Denoting by $A$ this complete intersection, note that $A = E \cup D$ and 
\[
20 = \deg(A) = 18 + 2 = \deg (E) + \deg(D).
\]

Now let $\mathcal A$ be the hypersurface arrangement defined by $G = G_1 \cdot G_2$, with $G_1, G_2$ as just defined. As noted, they meet in a reducible complete intersection curve of degree 20 that is non-reduced along a line. Let $J$ be the Jacobian ideal of $G$. It turns out  that $J^{top}$ defines a curve $X^{top}$ of degree 21 (not 20). It consists of the same $E$ together with a curve $Y$ of degree 3 (not 2) supported on $\ell$. In fact, $Y$ contains $D$ as a subscheme, and we have
\[
X^{top} = E \cup Y.
\]
So the process of taking the Jacobian ideal and then the top dimensional part does not simply produce  the complete intersection of $G_1$ and $G_2$, and the obstacle arises only along the tangent locus of the two hypersurfaces. 

And finally, in fact $X^{top}$ is not ACM! Nor can it be produced using basic double linkage. 
Thus we need a condition that was not necessary in the case of hyperplanes. For this reason we assume  in \Cref{cond star implies ACM}  that any two of the hypersurfaces in $\mathcal A$ meet along a smooth complete intersection.

\end{example}

\begin{example} \label{generalize "sat is CI" 1}

At the heart of the proof of \Cref{cond star implies ACM} is the surprising result of \Cref{sat is CI}. 
In this  example we  illustrate what may happen if we  slightly tweak the assumption of Theorem \ref{sat is CI}. Namely, we start with a complete intersection (a line) in $\mathbb P^3$ but instead of taking a union of surfaces of minimal degree containing it, we increase the degree by one. Specifically, let $\lambda$ be a line in $\mathbb P^3$ and let $G_1, G_2, G_3$ be three general quadrics containing it. Let $G = G_1 G_2 G_3$ and let $J$ be the Jacobian ideal of $G$. Since two quadrics link a line to a twisted cubic, we expect $J$ to define (up to saturation and up to possible embedded points) a non-reduced scheme $X$ supported on $\lambda$ together with the union of three twisted cubics, $C_1, C_2, C_3$. 

Our computations on \cocoa\ give specifically that $J$ is not saturated, but $J^{sat}$ is unmixed and defines curves supported on  $\lambda, C_1, C_2, C_3$ as predicted. We further see that $R/J^{sat}$ is not ACM, and that neither $X$ nor $C_1 \cup C_2 \cup C_2$ is ACM. In particular, $X$ has degree 4 but is not a complete intersection, as might have been predicted from Theorem \ref{sat is CI}.

\end{example}

The following example suggests that trying to move beyond the  results obtained in this  paper will not be so easy. It is not clear what the pattern is, at least in the direction we explore below.

\begin{example}  \label{ex: twc}
Another possible idea to try to generalize \Cref{cond star implies ACM} might be to replace the complete intersections by something other than a complete intersection. The simplest example might be allowing one of the base loci to be a twisted cubic curve. Here we explore this and see what may be the ramifications of making this choice. In contrast to Example \ref{generalize "sat is CI" 1}, we again take surfaces of smallest possible degree so the only difference is that the base locus is not a complete intersection. 
Thus we illustrate the importance of the complete intersection assumption in Theorem \ref{sat is CI}. 

Let $C$ be a twisted cubic curve in $\mathbb P^3$ and let $G_1,\dots,G_6 \in [I_C]_2$ be general quadrics containing $C$.  First let $G = G_1\cdots G_4$ and let $J$ be the Jacobian ideal of $G$. Then $J$ is already a saturated ideal. Its Hilbert polynomial is $30t - 90$. However, the top dimensional part $J^{top}$ of $J$ has Hilbert polynomial $30t - 96$, meaning that $J$ has embedded components. One checks that the scheme defined by $J^{top}$ is ACM. It consists of the union of six lines and a non-reduced component of degree 24 supported on $C$. This non-reduced component by itself is not ACM. 

The existence of the six lines can be explained since there are $\binom{4}{2}$ pairs of quadrics among the $G_i$, and each pair gives a link of $C$ to a line. The non-ACMness of the component supported on $C$ offers a contrast with the result of Theorem \ref{sat is CI}.

One could hope that at least the ACMness of the top dimensional part is in analogy with Theorem \ref{sat is CI}. So  let $G' = G_1\cdots G_6$ and let $J'$ be the Jacobian ideal of $G'$. Now $J'$ is not saturated. Its saturation $(J')^{sat}$ has Hilbert polynomial $78t -494$. One checks that $(J')^{sat}$ is unmixed, but the scheme it defines is not ACM. It consists of the union of 15 lines and a non-reduced curve of degree 63 supported on $C$. This non-reduced curve is again not ACM.

It is interesting to see how different the results are between taking four elements of $[I_C]_2$ and taking six such elements. Indeed, the absence of a clear pattern is striking. More precisely, we continue to let $C$ be a twisted cubic curve, and we take $G_1,\dots,G_s \in [I_C]_2$ to be general choices, for different values of $s$. We set $\mathcal A$ to be the hypersurface arrangement defined by $G_1\cdots G_s$.

Note that the residual to a twisted cubic, $C$, in a general complete intersection of quadrics in $[I_C]_2$ is a line, so as before $X^{top}$ and $X^{red}$  have $\binom{s}{2}$  components that are lines.
Of course when $s=2$ the singular locus of $G_1 G_2$ is a reduced complete intersection linking the twisted cubic to a line, and $X^{red} = X^{top}$ is ACM. So we are more interested in $s \geq 3$. Then $X^{top}$ will consist of a non-reduced component, say $Y$, and a union of $\binom{s}{2}$ lines.

Using \cocoa\ in the range $3 \leq s \leq 11$ we find that:

\begin{itemize}

\item Sometimes the Jacobian is saturated ($s = 4,5$) and otherwise it is not. \smallskip

\item Sometimes the saturation is unmixed ($s = 3,5,6,8,10$) and otherwise it is not. \smallskip

\item Sometimes the entire top dimensional part $X^{top}$ (the union of $Y$ and the lines) is ACM ($s = 3,4$) and otherwise it is not.  It seems that for $s \geq 5$ it always fails to be ACM. \smallskip

\item Even $\deg(Y)$,  the degree of the non-reduced component supported on the twisted cubic, and $\deg(X^{top})$ (the union of $Y$ and the skew lines) follow a strange pattern: 

\vspace{.1in}

\begin{center}

\begin{tabular} {c|ccccccccccccccccccc}
$s$ & 3 & 4 & 5 & 6 & 7 & 8 & 9 & 10 & 11  \\ \hline
$\deg(Y)$ & 12 & 24 & 42 & 63 & 87 & 117 & 150 & 189 & 231 \\
$\deg(X^{top})$ & 15 &30 & 52 & 78 & 108 & 145 & 186 & 234 & 286
\end{tabular}

\end{center}

\vspace{.1in}

\item The only consistent fact in this range is that the non-reduced component, $Y$, supported on the twisted cubic is not ACM for all $3 \leq s \leq 11$. However,  for $s=3,4$ when we add the $s$ lines, the result ($X^{top}$) becomes ACM.

\end{itemize}

Again, the conclusion here is that it will be very difficult to obtain further positive ACM results for the top dimensional part of a hypersurface arrangement, once one allows three of them to vanish on the same codimension two component that is not the complete intersection of two of them. 
Of course we would need to choose as our building blocks not just the non-reduced component supported on the twisted cubic but, instead, the reducible curve $X^{top}$ consisting of the union of this non-reduced component and $\binom{s}{2}$  lines. But as noted, even this is not  ACM for $s \geq 5$ (in our data range). Thus we feel that it will be difficult to find a more general set of hypotheses that force $X^{top}$ and $X^{red}$ to be ACM. It may be possible to show that in general (perhaps for $s$ sufficiently large), $X^{top}$ is {\em not} ACM, and perhaps that $X^{red}$ {\em is} ACM (depending on whether the union of $C$ and the $\binom{s}{2}$ lines is ACM). So there may be room for further progress here.

\end{example}

Our final example again illustrates why we have the assumptions on reducedness. Of course one immediately sees that the conclusion $J^{top} = \sqrt{J}$ is impossible if $J^{top}$ is not reduced, but there is a more subtle issue that we want to illustrate, involving the ACM conclusion.

\begin{example} \label{kummer}
It is well known (see the introduction of \cite{rao self-link}) that there is a smooth, non-ACM curve, $Z$, of degree 8 and genus 5 that is linked to itself in the complete intersection of two Kummer surfaces (which have degree 4).  Let $\mathcal A$  be the union of these two surfaces.  Let $J$ be its Jacobian ideal.  Theorem \ref{sat is CI}, or the other results of Section \ref{one CI}, do not quite apply since the complete intersection of the two surfaces is not reduced, and even the support is not a complete intersection  (or ACM, although it is smooth).  Surprisingly, it still turns out  that $J^{top}$ is  a complete intersection of type $((s-1)d,(s-1)d) = (4,4)$ so $J^{top}$ is ACM. But by construction, $\sqrt{J}$ is not ACM. The surfaces used in this example are not smooth, but nevertheless they have only nodes as singularities (so they do not affect $J^{top}$) and the fact that they are tangent along a curve illustrates one of the difficulties that we face in passing from hyperplanes to hypersurfaces. 

\end{example}

\section{Arrangements in $\mathbb P^3$ and Hartshorne-Rao modules} \label{sec: HR}

In this  section we restrict to the polynomial ring in four variables, $S = K[x_0,x_1,x_2,x_3]$, and consider hypersurface (including hyperplane) arrangements in $\mathbb P^3$ for simplicity. We note though that the results in this section have analogs for arrangements in any projective space $\PP^n$ with $n \ge 3$.  If $J$ is the Jacobian ideal of  a hypersurface arrangement in $\mathbb P^3$ then both ideals $J^{top}$ and $\sqrt{J}$ are unmixed, hence define  equidimensional curves in $\mathbb P^3$. Such a  curve $C$  has an even liaison class, which (up to shift) is identified with a finite length graded module $M(C)$. Specifically,  we have the  following definition.

\begin{definition}
Let $C \subset \mathbb P^3$ be an equidimensional curve with no embedded points. Then we denote by $M(C)$ the {\em Hartshorne-Rao module} of $C$, namely the graded module 
\[
M(C) = \bigoplus_{t \in \mathbb Z} H^1(\mathbb P^3, \mathcal I_C(t)). 
\]

\end{definition}

Under our unmixedness assumptions on $C$, $M(C)$ is a graded module of finite length. It is zero if and only if $C$ is ACM. The even liaison classes of equidimensional curves in $\mathbb P^3$ are in bijective correspondence with the finite length graded $R$-modules up  to shift. More specifically, curves $C$ and $C'$ are in the same even liaison class if and only if $M(C) \cong M(C')(\delta)$ for some $\delta \in \mathbb Z$.
See \cite{rao} for details. 

It is an interesting question to ask which even liaison  classes contain curves arising from Jacobian ideals, either as $J^{top}$ or as $\sqrt{J}$. Thus we focus in this section on the possible Hartshorne-Rao modules of such curves. This question  was addressed in \cite{MNS} in the case of hyperplane arrangements.

In this section we will use the notation $\mathcal A_g$ for the arrangement in $\mathbb P^3$ defined by a product $g$ of homogeneous polynomials, and by $C_g^{top}$ and $C_g^{red}$ the curves defined by the ideals $J^{top}$ and $\sqrt{J}$.


With this notation, we  obtain two results about the behavior of Hartshorne-Rao modules associated to modified hypersurface arrangements.  In particular, the first result provides  a method for producing arrangements with large Hartshorne-Rao modules. 

\begin{proposition} \label{prop:HR liaison addition}
    Let $f = f_1 \cdots f_s$ and $g = g_1 \cdots g_t$ be products of homogeneous polynomials such that

\begin{enumerate}[label=(\roman*)]

\item Any two distinct hypersurfaces in $\mathcal A_{fg}$ meet in a codimension two smooth subscheme (hence a smooth complete intersection). 

\item  $\hbox{codim } (f_i, f_j,g) = 3$ whenever $i \neq j$.

\item  $\hbox{codim } (f, g_i, g_j) = 3$ whenever $i \neq j$.

\end{enumerate}

Let $d_1 = \deg f$ and $d_2 = \deg g$. Then 
\[
M(C_{fg}^{top}) = M(C_f^{top}) (-d_2) \oplus M(C_g^{top})(-d_1)
\]
and
\[
M(C_{fg}^{red}) = M(C_f^{red}) (-d_2) \oplus M(C_g^{red})(-d_1)
\]
In particular, if $C^{top}_f$ and $C^{top}_g$ are both ACM then so is $C_{fg}^{top}$. The analogous statement holds for $C^{red}_{fg}$.
\end{proposition}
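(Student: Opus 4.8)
The plan is to deduce Proposition \ref{prop:HR liaison addition} directly from Theorem \ref{LAthm1} (Liaison Addition for Arrangements) together with the cohomological conclusion of classical Liaison Addition (Theorem \ref{LA}(iv)). The three hypotheses (i)--(iii) listed here are precisely the hypotheses \ref{LA1}, \ref{LA2} and the ``any two distinct hypersurfaces meet in a smooth codimension two complete intersection'' condition of Theorem \ref{LAthm1}, so all of its conclusions are available. In particular, part (b) of that theorem tells us that
\[
\Jac(fg)^{top} = g \cdot \Jac(f)^{top} + f \cdot \Jac(g)^{top},
\]
and that this ideal is obtained from the ideals $\Jac(f)^{top}$ and $\Jac(g)^{top}$ by Liaison Addition in the original sense of Theorem \ref{LA}, using the regular sequence $(f, g)$ with $\deg f = d_1$ and $\deg g = d_2$.

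First I would set $I_{V_1} = \Jac(f)^{top}$ and $I_{V_2} = \Jac(g)^{top}$ and check the standing hypotheses of Theorem \ref{LA}: both ideals are saturated and unmixed of codimension two (they define $C_f^{top}$ and $C_g^{top}$), hence define locally Cohen-Macaulay equidimensional codimension two subschemes of $\PP^3$ (local Cohen-Macaulayness is automatic for an unmixed codimension two subscheme of a smooth threefold, being then a Cohen-Macaulay curve). We take $F_1 = f \in I_{V_1}$ and $F_2 = g \in I_{V_2}$, which form a regular sequence by hypothesis. Then $I = g\cdot I_{V_1} + f \cdot I_{V_2}$ is exactly $\Jac(fg)^{top}$ by Theorem \ref{LAthm1}(b), and it defines $C_{fg}^{top}$. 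Now apply Theorem \ref{LA}(iv) with $n = 3$, so that the only relevant index is $i = 1$: this gives
\[
\bigoplus_{t \in \mathbb Z} H^1(\PP^3, \mathcal I_{C_{fg}^{top}}(t)) \cong \bigoplus_{t \in \mathbb Z} H^1(\PP^3, \mathcal I_{C_f^{top}}(t))(-d_2) \oplus \bigoplus_{t \in \mathbb Z} H^1(\PP^3, \mathcal I_{C_g^{top}}(t))(-d_1)
\]
as graded $S$-modules, which is precisely $M(C_{fg}^{top}) = M(C_f^{top})(-d_2) \oplus M(C_g^{top})(-d_1)$.

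For the reduced version I would argue identically, invoking Theorem \ref{LAthm1}(c): the radicals satisfy $\sqrt{\Jac(fg)^{top}} = g \cdot \sqrt{\Jac(f)^{top}} + f \cdot \sqrt{\Jac(g)^{top}}$ and this is again a Liaison Addition in the original sense (note that $f \in \sqrt{\Jac(f)^{top}}$ and $g \in \sqrt{\Jac(g)^{top}}$, and $(f,g)$ is still a regular sequence). These radical ideals are saturated and unmixed of codimension two, so $C_f^{red}$ and $C_g^{red}$ are again locally Cohen-Macaulay equidimensional codimension two; Theorem \ref{LA}(iv) then yields $M(C_{fg}^{red}) = M(C_f^{red})(-d_2) \oplus M(C_g^{red})(-d_1)$. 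Finally, the ``in particular'' statement is immediate: a curve $C$ with no embedded points is ACM if and only if $M(C) = 0$, and a direct sum of two shifted copies of the zero module is zero.

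I do not anticipate a serious obstacle here: the proposition is essentially a repackaging of Theorem \ref{LAthm1} once one notices that its hypotheses match those of this statement verbatim and that $n = 3$ collapses the range $1 \le i \le n-2$ in Theorem \ref{LA}(iv) to the single value $i = 1$ defining the Hartshorne-Rao module. The only point requiring a word of care is verifying the local Cohen-Macaulayness and equidimensionality hypotheses needed to invoke Theorem \ref{LA} --- but these follow at once from the fact (established in Section \ref{BDL section}) that $\Jac(f)^{top}$, $\Jac(g)^{top}$ and their radicals are unmixed and saturated of codimension two, together with the standard fact that an unmixed codimension two subscheme of $\PP^3$ is automatically a locally Cohen-Macaulay curve. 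A secondary point worth a sentence is that Theorem \ref{LAthm1}(a) also identifies $C_{fg}^{top}$ (respectively $C_{fg}^{red}$) set-theoretically as $C_f^{top} \cup C_g^{top} \cup \mathbb V(f,g)$, so the three schemes $V_1$, $V_2$ and the complete intersection $\mathbb V(f,g)$ genuinely have no common components and conclusion (i) of Theorem \ref{LA} applies, although this is not strictly needed for the Hilbert-function and cohomology statements we use.
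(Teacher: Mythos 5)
Your proposal is correct and follows exactly the paper's own route: the paper's proof is a two-line appeal to Theorem \ref{LAthm1} (to see that $\Jac(fg)^{top}$, and likewise its radical, arises by Liaison Addition from $\Jac(f)^{top}$ and $\Jac(g)^{top}$) followed by Theorem \ref{LA}(iv); you have simply spelled out the hypothesis-checking that the paper leaves implicit.
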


\begin{proof} 
Theorem \ref{LAthm1} guarantees that $C_{fg}^{top}$ is indeed obtained by a Liaison Addition, and then the result follows from Theorem \ref{LA}.
\end{proof}

As a consequence, we see that adding a general  hypersurface to an arrangement preserves the even liaison class. In particular, it preserves the Cohen-Macaulay property if present originally.

\begin{corollary} \label{HR mod shift}
Let $\mathcal A_f$ be a hypersurface arrangement in $\mathbb P^3$, defined  by $f = f_1 \cdots f_s$. Let $g$ be a homogeneous polynomial of degree $d$. Assume:

\begin{enumerate}[label=(\roman*)]

\item Each complete intersection of the form $(f_i, g)$ is smooth.

\item Each complete intersection of the form $(f_i, f_j)$ is smooth.

\item $\hbox{codim } (f_i, f_j,g) = 3$ whenever $i \neq j$.

\end{enumerate}

\noindent Then 
\[
M(C_{fg}^{top}) = M(C_f^{top}) (-d) 
\]
and
\[
M(C_{fg}^{red}) = M(C_f^{red}) (-d) .
\]
In particular, if $C_{f}^{top}$ is ACM then so is $C_{fg}^{top}$, and the analogous  statement holds for $C_{fg}^{red}$. More generally, the even liaison class of $C_f^{top}$ is the same as the even liaison class of $C_{fg}^{top}$, 
 and the even liaison class of $C_f^{red}$ is the same as that of $C_{fg}^{red}$. 

\end{corollary}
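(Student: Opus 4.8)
The plan is to obtain this as the special case $t=1$ of Proposition \ref{prop:HR liaison addition}, applied with the second factor $g$ equal to a single irreducible polynomial rather than a product. First I would observe that when $g$ is a single hypersurface, hypotheses (i), (ii), (iii) of the corollary are precisely hypotheses (i), (ii), (iii) of Proposition \ref{prop:HR liaison addition}: condition (iii) there, namely $\operatorname{codim}(f, g_i, g_j) = 3$ whenever $i \neq j$, is vacuous when $t = 1$, while conditions (i) and (ii) translate verbatim. (Alternatively, and perhaps more transparently, I would invoke Corollary \ref{BDL corollary}, Basic Double Linkage for arrangements, which is exactly the $t=1$ reduction of Theorem \ref{LAthm1} and gives $\operatorname{Jac}(fg)^{top} = g \cdot \operatorname{Jac}(f)^{top} + (f)$, i.e. $C_{fg}^{top}$ is obtained from $C_f^{top}$ by a Basic Double Link of the type described in Proposition \ref{BDL}.)

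Next I would feed this into the cohomological statement of Basic Double Linkage, Proposition \ref{BDL}(iii): if $Z$ is obtained from $V_1$ by basic double linkage using forms of degrees $d_1 \in I_{V_1}$ and $d_2 \in S$, then $\bigoplus_t H^i(\mathbb P^n, \mathcal I_Z(t)) \cong \bigoplus_t H^i(\mathbb P^n, \mathcal I_{V_1}(t))(-d_2)$ for $1 \le i \le n-2$. In our setting $n = 3$, so this applies with $i = 1$, and the form of degree $d_2$ is precisely $g$, whose degree is $d$. Hence $M(C_{fg}^{top}) = M(C_f^{top})(-d)$, and the identical argument applied to the radicals — which is legitimate because Corollary \ref{BDL corollary}(c) asserts that statements (a) and (b) persist after passing to radicals, and $\sqrt{J} = \sqrt{J^{top}}$ in the curve case after the top-dimensional truncation is irrelevant cohomologically — gives $M(C_{fg}^{red}) = M(C_f^{red})(-d)$. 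The ACM assertions are then immediate since $M(C) = 0$ exactly when $C$ is ACM, and the even liaison class statement follows because two equidimensional curves in $\mathbb P^3$ lie in the same even liaison class precisely when their Hartshorne-Rao modules agree up to shift (the bijection recalled just before the Definition of $M(C)$, citing \cite{rao}), and a shift by $-d$ is such a shift.

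The only genuine point requiring care — and the step I expect to be the main obstacle — is verifying that the radical version goes through cleanly, i.e. that $C_{fg}^{red}$ really is obtained from $C_f^{red}$ by a basic double link of the same numerical type. This is where I would lean on Corollary \ref{BDL corollary}(c): part (b) there gives $\sqrt{\operatorname{Jac}(fg)^{top}} = g \cdot \sqrt{\operatorname{Jac}(f)^{top}} + (f)$, which is exactly the ideal-theoretic shape of a basic double link (with the role of $F_1 \in I_{V_1}$ played by $f$ and the multiplier $F_2$ by $g$), so Proposition \ref{BDL}(iii) applies to it verbatim. One should double-check that $f \in \sqrt{\operatorname{Jac}(f)^{top}}$ and that $(f, g)$ is a regular sequence, both of which hold: $f$ lies in $\operatorname{Jac}(f)$ by Euler's formula hence in $\operatorname{Jac}(f)^{top}$ and its radical, and $(f,g)$ being a regular sequence is forced by hypothesis (i) together with the fact that $g$ meets each $f_i$ in codimension two. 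With these routine checks in place the corollary follows with no further work.
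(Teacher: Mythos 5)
Your proposal is correct and follows essentially the same route as the paper: the paper's proof likewise invokes Corollary \ref{BDL corollary} to see that $C_{fg}^{top}$ (and its radical analogue) is obtained from $C_f^{top}$ by a basic double link, and then reads off the module shift and the liaison-class statements from Proposition \ref{BDL}. Your extra verifications ($f \in \sqrt{\Jac(f)^{top}}$ via Euler's formula, $(f,g)$ a regular sequence) are sound and merely make explicit what the paper leaves implicit.
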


\begin{proof}
Corollary \ref{BDL corollary} guarantees that a basic double link is being performed, and so the assertions all follow from Proposition \ref{BDL}.
 We only note that the components coming from factors of $f$ are already accounted for in $ C_f^{top}$ (resp. $C_f^{red}$), and because of the assumption on $\mathcal A_f$ the new components are smooth curves coming from the intersection of 
 $f$ and $g$. 
\end{proof}

In \cite{MNS},  the authors gave an example of a hyperplane arrangement in $\mathbb P^3$ for which  $J^{top}$ defines a scheme whose Hartshorne-Rao module is one-dimensional, and one for which $\sqrt{J}$ defines a scheme whose Hartshorne-Rao module is one-dimensional. Then using Liaison Addition, arrangements were produced whose corresponding curves fell into any Buchsbaum liaison class. It is natural to wonder if this  can be done with hypersurfaces of larger degree, and in the next example we explore this  question.

\begin{example} \label{gen MS}
The following is motivated by  \cite[Example 4.5]{MS} (see also \cite[Example 4.1]{MNS}). There, a hyperplane arrangement is given for which  $J = J^{top}$ is not ACM, and has a one-dimensional Hartshorne-Rao module. (In that example, $\sqrt{J}$ was ACM, but related examples were found in \cite{MNS} for which $\sqrt{J}$ is not ACM.) Here we take a subset of that example, motivated by \cite[Example 4.3]{MNS} but replacing the linear forms by general quadratic forms. We will see that both $J^{top}$ and $\sqrt{J}$ fail to beACM.

To stress the connection with those examples, we denote by $X,Y,Z,W$ the general quadrics. We let
\[
G := X*Y*Z*W*(X+Y)*(Y+Z)*(Z+W)*(W+X)*(W+X+Y+Z)
\]
and take the Jacobian ideal, $J$, generated by the four partial derivatives of $G$. This gives an ideal whose quotient has Hilbert polynomial $168t - 1728$. It turns out that the saturation, $J^{sat}$, has the same Hilbert polynomial but not the same Hilbert function, so $J$ is not saturated. It turns out that the saturation is unmixed: $J^{sat} = J^{top}$. The Betti diagram of $R/J^{top}$ is

\begin{verbatim}
         0    1    2    3
-------------------------
  0:    1    -    -    -
  1:    -    -    -    -
   ...
 14:    -    -    -    -
 15:    -    4    -    -
 16:    -    -    -    -
   ...
 19:    -    -    -    -
 20:    -    -    4    -
 21:    -    -    -    1
-------------------------
Tot:    1    4    4    1

\end{verbatim}

Notice that  the last matrix in this resolution is a $1 \times 4$ matrix of quadrics, and one can check that it defines a complete intersection. So the Hartshorne-Rao module of $C_G^{top}$ is a complete intersection. Its components have dimensions $(1,4,6,4,1)$.

This example suggests (for example) that there might not exist a quadric arrangement yielding a Hartshorne-Rao module that is one dimensional. Indeed, the Musta\c t\v a-Schenck example gave a Hartshorne-Rao module that was isomorphic to $S/(x,y,z,w)$, and replacing the variables by quadrics gives a Hartshorne-Rao module  isomorphic to $S/(Q_1,Q_2,Q_3,Q_4)$, where the $Q_i$ are quadrics. 

The radical of $J$ also turns out to not be ACM. Its Hilbert polynomial is $96t -672 $ and its Betti diagram is

\begin{verbatim}
        0    1    2    3
-------------------------
  0:    1    -    -    -
  1:    -    -    -    -
  ...
 12:    -    -    -    -
 13:    -   12    -    -
 14:    -    -   15    -
 15:    -    -    -    4
-------------------------
Tot:    1   12   15    4
\end{verbatim}

\end{example}

Using further quadruples of general quadrics to produce  hypersurface arrangements analogous to \Cref{gen MS} and then employing  \Cref{prop:HR liaison addition}, we obtain a hypersurface arrangement whose Hartshorne-Rao modules has a dimension exceeding any finite bound.  This also motivates the following question. 

\begin{problem}
    Which even liaison classes contain the top-dimensional part of a hypersurface arrangement? 
\end{problem}

Bearing in mind that Liaison Addition and Basic Double Linkage also involve shifts of the modules, we pose the following question that partially invokes the Lazarsfeld-Rao property for even liaison classes \cite{BBM}.

\begin{problem} \label{liaison problem}
    Let $\mathcal A$ be a hypersurface arrangement in $\mathbb P^3$ defined by a product $f$ of homogeneous polynomials satisfying the hypotheses of Theorem \ref{cond star implies ACM}. Let $M = M(C_f^{top})$. Is it true that {\em up to flat deformation}, $C_f^{top}$ is the only curve arising as the top-dimensional part of a hypersurface arrangement and having Hartshorne-Rao module $M$? (An analogous question can be asked in $\mathbb P^n$.)
\end{problem}

\begin{remark}
\begin{enumerate}
    
\item Note that Liaison Addition (Theorem \ref{LAthm1}) and Basic Double Linkage (Corollary \ref{BDL corollary}) can produce many other curves arising as the top dimensional part of an arrangement, having Hartshorne-Rao module that is a {\em shift} of $M$. This question asks about $M$ itself. 

Furthermore, if $\mathcal A_f$ is such an arrangement and $\ell_1, \ell_2$ are both general linear forms, then it follows from what we have shown in this paper that $C_{\ell_1f}^{top}$ and $C_{\ell_2 f}^{top}$ are different curves having isomorphic Hartshorne-Rao modules, namely isomorphic to $M(-1)$. However, these curves lie in a flat family, so they do not give a counterexample to our question.

\item In \cite{MN11} the authors showed that within any even liaison class of codimension two subschemes of $\mathbb P^n$, the subset of those elements that satisfy a certain numerical property actually has a Lazarsfeld-Rao property of its own. In view of what we have just said, we can further ask the following. 

It follows immediately from Corollary \ref{HR mod shift} that once you have a subscheme in an even liaison class coming from an arrangement, then that even liaison class has infinitely  many other subschemes, of infinitely many  degrees, coming from arrangements. The point of our Problem \ref{liaison problem} is to give some structure to the set of all subschemes in an even liaison class coming from  arrangements, and in  particular to ask whether there are any that do not come directly  from Corollary  \ref{HR mod shift}.

More precisely, let $C_f^{top}$ be a subscheme coming from a hypersurface arrangement defined by $f$, and let $\mathcal L$ be the even liaison class of $C_f^{top}$. Let $\mathcal M \subset \mathcal L$ be the set of elements of $\mathcal L$ that arise as  the top dimensional parts of hypersurface arrangements. Does $\mathcal M$ also satisfy a Lazarsfeld-Rao property? See \cite{BBM} and  \cite{MN11} for details. 

\end{enumerate}
\end{remark}



\end{document}